\documentclass{amsart}

\usepackage{graphicx}
\usepackage{hyperref}
\usepackage{cite}
\usepackage[utf8]{inputenc}
\usepackage[english]{babel}
\usepackage{amsmath}
\usepackage{amsfonts}
\usepackage{amssymb}
\usepackage{xcolor}
\usepackage{amsthm}
\usepackage[all]{xy}
\usepackage{enumitem}

\usepackage{tikz-cd}
\usetikzlibrary{cd}

\newtheorem{thm}{Theorem}[section]
\newtheorem{lemma}[thm]{Lemma}
\newtheorem{prop}[thm]{Proposition}
\newtheorem{cor}[thm]{Corollary}

\theoremstyle{definition}
\newtheorem{defn}[thm]{Definition}
\newtheorem{example}[thm]{Example}

\theoremstyle{remark}
\newtheorem{rem}[thm]{Remark}

\numberwithin{equation}{section}


\DeclareMathOperator{\Ker}{Ker}

\DeclareMathOperator{\Img}{Im}

\DeclareMathOperator{\End}{End}
\DeclareMathOperator{\Soc}{Soc}
\DeclareMathOperator{\Rad}{Rad}

\newcommand{\ess}{\leq^\text{ess}}


\begin{document}

\title{$\mathfrak{m}$-Endoregular lattices}

\author{Mauricio Medina-B\'arcenas}
\address{Facultad de Ciencias, Universidad Nacional Aut\'onoma de M\'exico, Circuito Exterior,
	Ciudad Universitaria 04510, Ciudad de México, M\'exico}
\email{mmedina@ciencias.unam.mx}

\thanks{The first author was supported by the grant ``Programa de Becas Posdoctorales en la UNAM 2022'' from the Universidad Nacional Aut\'onoma de M\'exico (UNAM)}

\author{Hugo Rinc\'on-Mej\'ia}
\address{Facultad de Ciencias, Universidad Nacional Aut\'onoma de M\'exico, Circuito Exterior,
	Ciudad Universitaria 04510, Ciudad de México, M\'exico}
\email{hurincon@ciencias.unam.mx}

\subjclass[2010]{Primary 06C05, 06C15, 16D10; Secondary 08A35, 06B35}


\date{}


\keywords{Endoregular lattice, Abelian  endoregular lattice, $\mathcal{K}$-extending lattice, $\mathcal{T}$-lifting lattice, linear morphism}

\begin{abstract}
	In a previous work, (dual)-$\mathfrak{m}$-Rickart lattices were studied. Now, in this paper, we introduce $\mathfrak{m}$-endoregular lattices as those lattices $\mathcal{L}$ such that $\mathfrak{m}$ is a regular monoid, where $\mathfrak{m}$ is a submonoid with zero of $\End_{lin}(\mathcal{L})$. We show that these lattices can be characterized in terms of $\mathfrak{m}$-Rickart and $\mathfrak{m}$-dual-Rickart lattices. Also, we compare these new lattices with those lattices in which every compact element is a complement. We characterize the $\mathfrak{m}$-endoregular lattices such that every idempotent in $\mathfrak{m}$ is central in $\mathfrak{m}$ and we show that for these lattices the complements are a sublattice which is a Boolean algebra. We introduce two new concepts, $\mathfrak{m}$-$\mathcal{K}$-extending and $\mathfrak{m}$-$\mathcal{T}$-lifting lattices. For these lattices, we show that the monoid $\mathfrak{m}$ has a regular quotient monoid provided they satisfy $\mathfrak{m}$-$C_2$ and $\mathfrak{m}$-$D_2$ respectively.
\end{abstract}

\maketitle

\section{Introduction}
    \emph{Von Neumann regular rings} have been studied widely in the literature for decades. These rings have their roots in functional analysis and they are connected with other classes of rings, for example, \emph{Rickart rings}. In the past years, there has been an interest in studying the notions of von Neumann and Rickart rings in the module theoretic context. Also, once the notion of Rickart module was introduced \cite{leerickart}, its dual naturally emerged \cite{lee2011dual}. In the case of translating the concept of von Neumann regularity to modules, we can find different approaches \cite{leemodules,tuganbaevrings,zelmanowitz1972regular}. In \cite{tuganbaevrings} an $R$-module $M$ is said to be \emph{regular} if every cyclic submodule of $M$ is a direct summand. On the other hand, in \cite{leemodules} an $R$-module $M$ is said to be \emph{endoregular} if the endomorphism ring of $M$ is a von Neumann regular ring. It is clear that these two notions agree when the module is the base ring $R$, but in general, the notions are different. A particularity of endoregular modules is that they can be characterized as modules that are both Rickart and dual Rickart. This allows us to study endoregular modules in terms of Rickart modules and their dual. Subclasses of von Neumann regular rings, such as \emph{unit-regular} and \emph{strongly regular} have also been taken to modules using endoregular modules \cite{medina2020abelian,leeunit}. In all of these cases, many results in rings have been extended to modules and this more general point of view has enriched the theory and opened new lines of study. 
    
	Recall that an $R$-module $M$ is called \emph{Rickart} if $\Ker\varphi$ is a direct summand of $M$ for all $\varphi\in\End_{R}(M)$. Dualy, an $R$-m\'odule $M$ is called \emph{dual-Rickart} if $\Img\varphi$ is a direct summand of $M$ for all $\varphi\in\End_{R}(M)$. In \cite[Proposition 2.3]{leemodules} it is proved that for an $R$-module $M$, $\End_R(M)$ is a von Neumann regular ring if and only if $M$ is Rickart and dual-Rickart. Given an $R$-module $M$, the set of submodules of $M$ is a \emph{complete modular lattice}. In this lattice, the complements are exactly the direct summand of $M$. So, the concepts of Rickart and dual-Rickart modules involve the complements in the lattice of submodules of $M$ and the kernel and image of the homomorphisms. In \cite{albu2013category} it is defined the \emph{linear morphisms} between bounded lattices, mimicking the behavior of a homomorphism of modules in the lattice of submodules. Given a linear morphism $\varphi:\mathcal{L}\to\mathcal{L}'$ there is an element $\ker_{\varphi}\in\mathcal{L}$ called \emph{the kernel of $\varphi$} and there is an image $\varphi(\mathbf{1})\in\mathcal{L}'$ where $\mathbf{1}$ is the greatest element of $\mathcal{L}$ (Definition \ref{linmor}). Therefore it is possible to carry the concepts of Rickart and dual-Rickart to lattices. This has been done in a previous work of the authors \cite{medina2022mbaer}. In that article, it is given a number of properties of the called \emph{Rickart and Baer lattices} and their \emph{duals}. Also, it is studied the relation of the lattice with its monoid of linear endomorphisms. In this way, we defined Rickart and Baer monoids and we proved that these definitions agree with the Rickart and Baer lattices as in the case of Rickart and Baer modules with their endomorphism rings. The approach given in \cite{medina2022mbaer} consisted in to define the concepts and stating the propositions for a submonoid with zero $\mathfrak{m}$ of the monoid of all linear endomorphisms $\End_{lin}(\mathcal{L})$ of a lattice $\mathcal{L}$. With this, we achieve more generality and it is possible to recover much of the theory for modules. 
	
	In this new manuscript, which can be seen as a continuation of \cite{medina2022mbaer}, we consider a complete modular lattice $\mathcal{L}$ and a submonoid with zero $\mathfrak{m}$ of its monoid of linear endomorphism $\End_{lin}(\mathcal{L})$. Our goal is to describe those lattices such that $\mathfrak{m}$ is a \emph{regular monoid}, that is, for every $\varphi\in\mathfrak{m}$ there exists $\psi\in\mathfrak{m}$ such that $\varphi\psi\varphi=\varphi$. A complete modular lattice $\mathcal{L}$ is called \emph{$\mathfrak{m}$-endoregular} if $\mathfrak{m}$ is a regular monoid (Definition \ref{defendoreg}). Let $\Lambda(M)$ denote the lattice of submodules of a module $M$. Given a morphism of modules $f:M\to N$, $f$ induces a linear morphism $f_\ast:\Lambda(M)\to \Lambda(N)$ given by $f_\ast(L)=f(L)$ for all $L\leq M$. Therefore $\mathfrak{E}_M=\{f_\ast\mid f:M\to M\}$ is a submonoid of $\End_{lin}(\Lambda(M))$ which allows us to generalize some of the theory known in modules to lattices, in fact, an $R$-module $M$ is endoregular if and only if $\Lambda(M)$ is $\mathfrak{E}_M$-endoregular. In module theory, there exist the concepts of quasi-continuous and continuous modules (and their duals) which are defined by the ($C_i$)'s (resp. ($D_i$)'s) $1\leq i\leq 3$ conditions \cite{mohamedcontinuous}. A remarkable result about these modules is that if $M$ is a continuous $R$-module and $S$ is its endomorphism ring then $S/\Delta$ is a von Neumann regular ring where $\Delta=\{f\in S\mid \Ker f\ess M\}$ \cite[Proposition 3.5]{mohamedcontinuous}. A similar result is given for a discrete module \cite[Theorem 5.4]{mohamedcontinuous}. In order to take these results to lattices we introduce two new concepts named $\mathfrak{m}$-$\mathcal{K}$-\emph{extending} and $\mathfrak{m}$-$\mathcal{T}$-\emph{lifting lattices} (Definition \ref{defkextlif}). 
    
    The paper is divided into four sections. The first one is this Introduction and in Section \ref{pre} we give some necessary background to make this work self-contained as possible. The concepts and results presented in this section are taken from \cite{albu2013category} and our previous work \cite{medina2022mbaer}. Section \ref{endoreg} is the main part of the paper. In this section, we give general properties of the $\mathfrak{m}$-endoregular lattices. We show that a lattice $\mathcal{L}$ is $\mathfrak{m}$-endoregular if and only if $\mathcal{L}$ is $\mathfrak{m}$-Rickart and dual-$\mathfrak{m}$-Rickart (Theorem \ref{kerimgsumm}). We also introduce the concept of \emph{von Neumann lattice} as a compact lattice $\mathcal{L}$ in which every compact element has a complement as an analogous of regular module in \cite{tuganbaevrings}. We study when an endoregular lattice is von Neumann and vice-versa (Proposition \ref{vnl} and Corollary \ref{vnlendo}). As in the case of modules, we look at the $\mathfrak{m}$-endoregular lattices such that all the idempotents in $\mathfrak{m}$ are central and we call them $\mathfrak{m}$-\emph{abelian-endoregular}. We characterize them (Proposition \ref{ker+img}, Proposition \ref{abendofi}) and we show when in an $\mathfrak{m}$-abelian-endoregular lattice $\mathcal{L}$ the set of complements $C(\mathcal{L})$ is a Boolean sublattice of $\mathcal{L}$ (Proposition \ref{Cboolidemcomm} and Corollary \ref{corCbool}). In the last section, Section \ref{rqm}, we introduce the $\mathfrak{m}$-$\mathcal{K}$-extending and $\mathfrak{m}$-$\mathcal{T}$-lifting lattices (Definition \ref{defkextlif}). We show that every $\mathfrak{m}$-Rickart lattice is $\mathfrak{m}$-$\mathcal{K}$-extending and every $\mathfrak{m}$-dual-Rickart lattice is $\mathfrak{m}$-$\mathcal{T}$-lifting, moreover, we give the converses (Proposition \ref{rickex} and Proposition \ref{drictlif}). We define two congruences $\equiv_\Delta$ and $\equiv^\nabla$ on any submonoid $\mathfrak{m}\subseteq\End_{lin}(\mathcal{L})$. On one hand, it is proved that $\mathfrak{m}/\equiv_\Delta$ is a regular monoid provided that $\mathcal{L}$ is $\mathfrak{m}$-$\mathcal{K}$-extending and satisfies $\mathfrak{m}$-$C_2$, on the other hand, $\mathfrak{m}/\equiv^\nabla$ is a regular monoid provided that $\mathcal{L}$ is $\mathfrak{m}$-$\mathcal{T}$-lifting and satisfies $\mathfrak{m}$-$D_2$ (Theorem \ref{thmDelta} and Theorem \ref{thmnabla}).

	Throughout this paper, $\mathcal{L}$ will denote a (bounded, complete, modular) lattice, the lowest element of $\mathcal{L}$ will be denoted by $\mathbf{0}$ and the greatest element will be denoted by $\mathbf{1}$. Given $a,b\in\mathcal{L}$, $[a,b]$ will denote the interval $\{x\in\mathcal{L}\mid a\leq x\leq b\}$. The subset of complements in $\mathcal{L}$ will be denoted as $C(\mathcal{L})$. The set of linear endomorphisms of $\mathcal{L}$ will be denoted as $\End_{lin}(\mathcal{L})$ which is a monoid with the composition. The letter $\mathfrak{m}$ will stand for a submonoid with zero of $\End_{lin}(\mathcal{L})$. All rings will be associative with unit, and all modules will be left modules. Given an $R$-module, $\End_R(M)$ will denote the endomorphism ring of $M$.
	
\section{Preliminaries}\label{pre}

\begin{defn}\label{linmor}
	A map between bounded lattices $\varphi:\mathcal{L}\to \mathcal{L}'$ is called a \emph{linear morphism} if there exists $\ker_\varphi\in\mathcal{L}$ called the \emph{kernel} of $\varphi$,  and $a'\in\mathcal{L}'$ such that 
	\begin{enumerate}
		\item $\varphi(x)=\varphi(x\vee \ker_\varphi)$ for all $x\in\mathcal{L}$.
		\item $\varphi$ induces an isomorphism of lattices $\overline{\varphi}:[\ker_\varphi,\mathbf{1}]\to [\mathbf{0},a']$ given by $\overline{\varphi}(x)=\varphi(x)$ for all $x\in[\ker_\varphi,\mathbf{1}]$.
	\end{enumerate}
\end{defn}

\begin{rem}\label{linmorjoins}
	If the lattices are complete, we will assume that the isomorphism in item (2) of Definition \ref{linmor} is an isomorphism of complete lattices. In this case, a linear morphism commutes with arbitrary joins \cite[Proposition 1.3]{albu2013category}.
\end{rem}

\textbf{Notation:} Let $\mathcal{L}$ be a complete modular lattice and $a,x\in\mathcal{L}$. There are two canonical linear morphisms $\iota_x:[\mathbf{0},x]\to \mathcal{L}$ the inclusion, and $\rho_a:\mathcal{L}\to[a,\mathbf{1}]$ given by $\rho_a(y)=a\vee y$.

\begin{prop}[{\cite[Proposition 2.4]{medina2022mbaer}}]
	Let $\mathcal{L}$ be a bounded modular lattice, and $x\in\mathcal{L}$ be an element with complement $x'$. Then, the map $\pi_x:\mathcal{L}\to \mathcal{L}$ given by $\pi_x(a)=(a\vee x')\wedge x$ is a linear morphism.
\end{prop}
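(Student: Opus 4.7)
The plan is to verify Definition \ref{linmor} with the natural choices $\ker_{\pi_x} := x'$ and image element $a' := x$.

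For condition (1), I would simply compute: for every $a \in \mathcal{L}$,
$$\pi_x(a \vee x') \;=\; ((a \vee x') \vee x') \wedge x \;=\; (a \vee x') \wedge x \;=\; \pi_x(a),$$
which uses nothing but idempotence of $\vee$.

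For condition (2), I would first observe that if $y \geq x'$, then $y \vee x' = y$, so the restriction of $\pi_x$ to $[x', \mathbf{1}]$ is simply $y \mapsto y \wedge x$, which clearly lands in $[\mathbf{0}, x]$. Call this restriction $\overline{\pi_x}$. To show it is a lattice isomorphism onto $[\mathbf{0}, x]$, I would exhibit the explicit inverse $\sigma \colon [\mathbf{0}, x] \to [x', \mathbf{1}]$ defined by $\sigma(b) := b \vee x'$. The two compositions are identities by a direct application of the modular law: for $y \in [x', \mathbf{1}]$, since $x' \leq y$, modularity gives $\sigma(\overline{\pi_x}(y)) = (y \wedge x) \vee x' = y \wedge (x \vee x') = y \wedge \mathbf{1} = y$; dually, for $b \in [\mathbf{0}, x]$, since $b \leq x$, modularity gives $\overline{\pi_x}(\sigma(b)) = (b \vee x') \wedge x = b \vee (x' \wedge x) = b \vee \mathbf{0} = b$. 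Since both $\overline{\pi_x}$ and $\sigma$ are order-preserving (as compositions of meets and joins against fixed elements), this bijection is an order isomorphism between the intervals, hence a lattice isomorphism.

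There is no real obstacle here beyond the careful bookkeeping of the modular law in each direction; the fact that $x$ and $x'$ are complements is what makes the two computations collapse to the endpoints $\mathbf{1}$ and $\mathbf{0}$ of the respective intervals, and modularity is exactly the hypothesis needed so that the maps $y \mapsto y \wedge x$ on $[x', \mathbf{1}]$ and $b \mapsto b \vee x'$ on $[\mathbf{0}, x]$ invert each other.
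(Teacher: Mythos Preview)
Your argument is correct and is exactly the standard verification: take $\ker_{\pi_x}=x'$, observe that on $[x',\mathbf{1}]$ the map reduces to $y\mapsto y\wedge x$, and invoke the classical modular-lattice transposition isomorphism $[x',\mathbf{1}]\cong[\mathbf{0},x]$ (with inverse $b\mapsto b\vee x'$) that holds whenever $x$ and $x'$ are complements. The paper does not supply its own proof of this proposition---it simply cites \cite[Proposition~2.4]{medina2022mbaer}---so there is nothing to compare against, but your write-up is precisely the argument one would expect in that reference.
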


\begin{defn}
	Let $\mathcal{L}$ be a bounded modular lattice, and $x\in\mathcal{L}$ be an element with a complement. The linear morphism $\pi_x$ is called the \emph{projection on $x$}.
\end{defn}

\begin{rem}\label{exmorf}
	Let $\mathcal{L}$ be a complete modular lattice, and $x,y\in\mathcal{L}$ with $x$ being a complement. Suppose a linear morphism exists $\varphi:[\mathbf{0},x]\to[\mathbf{0},y]$. Then $\varphi$ can be extended to a linear endomorphism of $\mathcal{L}$ considering $\widehat{\varphi}=\iota_y\varphi\pi_x:\mathcal{L}\to \mathcal{L}$.
\end{rem}

\begin{prop}[{\cite[Proposition 2.10]{medina2022mbaer}}]\label{idemcomp}
	Let $\mathcal{L}$ be a bounded modular lattice and $\varepsilon\in\End_{lin}(\mathcal{L})$. If $\varepsilon$ is idempotent, then $\mathbf{0}=\ker_\varepsilon\wedge\varepsilon(\mathbf{1})$ and $\mathbf{1}=\ker_\varepsilon\vee\varepsilon(\mathbf{1})$.
\end{prop}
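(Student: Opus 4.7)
The plan is to exploit the lattice isomorphism $\overline{\varepsilon}:[\ker_\varepsilon,\mathbf{1}]\to[\mathbf{0},\varepsilon(\mathbf{1})]$ given by the definition of a linear morphism, together with the fact that $\varepsilon\circ\varepsilon=\varepsilon$. The key preliminary observation is that, since $\ker_\varepsilon$ is the minimum of $[\ker_\varepsilon,\mathbf{1}]$ and $\overline{\varepsilon}$ is an order-isomorphism onto $[\mathbf{0},\varepsilon(\mathbf{1})]$, one has $\varepsilon(\ker_\varepsilon)=\overline{\varepsilon}(\ker_\varepsilon)=\mathbf{0}$.

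For the meet, set $a=\ker_\varepsilon\wedge\varepsilon(\mathbf{1})$. Since $a\leq\varepsilon(\mathbf{1})$, the surjectivity of $\overline{\varepsilon}$ yields $c\in[\ker_\varepsilon,\mathbf{1}]$ with $a=\varepsilon(c)$; then idempotency gives $\varepsilon(a)=\varepsilon(\varepsilon(c))=\varepsilon(c)=a$. On the other hand, $a\leq\ker_\varepsilon$ implies $a\vee\ker_\varepsilon=\ker_\varepsilon$, so by property (1) of Definition \ref{linmor} we have $\varepsilon(a)=\varepsilon(a\vee\ker_\varepsilon)=\varepsilon(\ker_\varepsilon)=\mathbf{0}$. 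Combining the two computations forces $a=\mathbf{0}$.

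For the join, set $b=\ker_\varepsilon\vee\varepsilon(\mathbf{1})$. Clearly $b\in[\ker_\varepsilon,\mathbf{1}]$, so it suffices to compare $\overline{\varepsilon}(b)$ with $\overline{\varepsilon}(\mathbf{1})=\varepsilon(\mathbf{1})$ and invoke injectivity. Since linear morphisms preserve finite joins (which is immediate from the characterization in Definition \ref{linmor} applied to the interval $[\ker_\varepsilon,\mathbf{1}]$, and is in any case recorded in Remark \ref{linmorjoins}), together with idempotency we compute
\[
\varepsilon(b)=\varepsilon(\ker_\varepsilon)\vee\varepsilon(\varepsilon(\mathbf{1}))=\mathbf{0}\vee\varepsilon(\mathbf{1})=\varepsilon(\mathbf{1}).
\]
Because $\overline{\varepsilon}$ is an isomorphism of lattices, in particular injective, and both $b$ and $\mathbf{1}$ belong to $[\ker_\varepsilon,\mathbf{1}]$ with the same image, we conclude $b=\mathbf{1}$.

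The argument is essentially a direct unpacking of the linear-morphism axioms, so no genuine obstacle arises. The only subtle point is the handling of $\varepsilon(a)$ for an element $a\leq\ker_\varepsilon$, where one must remember that $\varepsilon$ is not literally determined by $\overline{\varepsilon}$ on all of $\mathcal{L}$ but is instead the composite $\overline{\varepsilon}\circ(-\vee\ker_\varepsilon)$; this is exactly what lets us identify $\varepsilon(a)$ with $\varepsilon(\ker_\varepsilon)=\mathbf{0}$ in the first step.
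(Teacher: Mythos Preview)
Your proof is correct. Note that in this paper the proposition is quoted from \cite{medina2022mbaer} without proof, so there is no in-paper argument to compare against; your direct unpacking of the linear-morphism axioms (using surjectivity of $\overline{\varepsilon}$ and idempotency for the meet, and injectivity of $\overline{\varepsilon}$ together with preservation of finite joins for the join) is exactly the natural route and needs no modification.
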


\begin{cor}\label{idemespi}
	Let $\mathcal{L}$ be a complete modular lattice, and $\varepsilon:\mathcal{L}\to \mathcal{L}$ a linear morphism such that $\varepsilon^2=\varepsilon$. Then $\varepsilon=\pi_{\varepsilon(\mathbf{1})}$.
\end{cor}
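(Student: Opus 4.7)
The plan is to verify the explicit formula $\varepsilon(a) = (a \vee \ker_\varepsilon) \wedge \varepsilon(\mathbf{1})$ for every $a\in\mathcal{L}$, which by definition identifies $\varepsilon$ with the projection $\pi_{\varepsilon(\mathbf{1})}$ associated with the complement $\ker_\varepsilon$. Proposition \ref{idemcomp} already guarantees that $\ker_\varepsilon$ is a complement of $\varepsilon(\mathbf{1})$, so this projection is well-defined.

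The first reduction is to move everything into the interval $[\ker_\varepsilon,\mathbf{1}]$. Condition (1) of Definition \ref{linmor} gives $\varepsilon(a)=\varepsilon(a\vee\ker_\varepsilon)$, and the definition of $\pi_{\varepsilon(\mathbf{1})}$ also factors through $a\mapsto a\vee\ker_\varepsilon$, so it suffices to prove $\varepsilon(b) = b\wedge\varepsilon(\mathbf{1})$ for every $b\in[\ker_\varepsilon,\mathbf{1}]$.

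This last equality is the heart of the argument and the only place where idempotency is used essentially. There are two natural candidate isomorphisms $[\ker_\varepsilon,\mathbf{1}]\to[\mathbf{0},\varepsilon(\mathbf{1})]$: the map $\overline{\varepsilon}$ coming from condition (2), and the standard modular-lattice isomorphism $b\mapsto b\wedge\varepsilon(\mathbf{1})$ provided by the fact that $\ker_\varepsilon$ and $\varepsilon(\mathbf{1})$ are complements. To show they coincide I would set $c:=\varepsilon(b)$, invoke $\varepsilon^2=\varepsilon$ to get $\varepsilon(c)=c$, and then combine this with condition (1) to obtain $\varepsilon(c\vee\ker_\varepsilon)=\varepsilon(c)=c=\varepsilon(b)$. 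The injectivity of $\overline{\varepsilon}$ on $[\ker_\varepsilon,\mathbf{1}]$ then forces $b=c\vee\ker_\varepsilon$, after which a single modularity step,
$$b\wedge\varepsilon(\mathbf{1})=(c\vee\ker_\varepsilon)\wedge\varepsilon(\mathbf{1})=c\vee(\ker_\varepsilon\wedge\varepsilon(\mathbf{1}))=c,$$
completes the identification. The hard part is precisely this matching of the two isomorphisms; everything else is routine bookkeeping on the axioms of a linear morphism.
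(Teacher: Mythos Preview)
Your proof is correct and follows essentially the same route as the paper: both use idempotency to obtain $\varepsilon(\varepsilon(a)\vee\ker_\varepsilon)=\varepsilon(a)=\varepsilon(a\vee\ker_\varepsilon)$, invoke the injectivity of $\overline{\varepsilon}$ on $[\ker_\varepsilon,\mathbf{1}]$ to conclude $a\vee\ker_\varepsilon=\varepsilon(a)\vee\ker_\varepsilon$, and then finish with the modularity computation $(\varepsilon(a)\vee\ker_\varepsilon)\wedge\varepsilon(\mathbf{1})=\varepsilon(a)$. Your write-up merely relabels $b=a\vee\ker_\varepsilon$ and $c=\varepsilon(a)$ and spells out the modularity step more explicitly.
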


\begin{proof}
	Since $\varepsilon(a)=\varepsilon(\varepsilon(a))$ for all $a\in\mathcal{L}$, $a\vee\ker_\varepsilon=\varepsilon(a)\vee\ker_\varepsilon$. Therefore 
	\[\pi_{\varepsilon(\mathbf{1})}(a)=(a\vee\ker_\varepsilon)\wedge\varepsilon(\mathbf{1})=(\varepsilon(a)\vee\ker_\varepsilon)\wedge\varepsilon(\mathbf{1})=\varepsilon(a),\]
	for all $a\in \mathcal{L}$.
\end{proof}

\begin{cor}
		Let $\mathcal{L}$ be a bounded modular lattice. Then there exists a bijective correspondence between idempotent linear  endomorphisms of $\mathcal{L}$ and pairs $(x,x')$ such that $x'$ is a complement of $x$ in $\mathcal{L}$.
\end{cor}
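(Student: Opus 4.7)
My plan is to exhibit explicit maps in both directions and verify they are mutually inverse using the tools already developed. In one direction, I would send an idempotent linear endomorphism $\varepsilon$ to the pair $(\varepsilon(\mathbf{1}),\ker_\varepsilon)$; Proposition \ref{idemcomp} tells me these two elements are mutual complements, so this assignment has the correct codomain. In the other direction, I would send a complementary pair $(x,x')$ to the projection $\pi_x$, which the proposition preceding Definition of projection has already identified as a linear endomorphism. A short calculation using modularity will confirm $\pi_x$ is idempotent: since $\pi_x(a)\leq x$ for every $a$, the modular law yields $\pi_x(\pi_x(a))=(\pi_x(a)\vee x')\wedge x=\pi_x(a)\vee(x'\wedge x)=\pi_x(a)$.

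One of the two round trips is essentially Corollary \ref{idemespi}: starting from an idempotent $\varepsilon$, I form the pair $(\varepsilon(\mathbf{1}),\ker_\varepsilon)$ and then the projection $\pi_{\varepsilon(\mathbf{1})}$, which the corollary identifies with $\varepsilon$ itself. The reverse round trip demands checking $\pi_x(\mathbf{1})=x$, which is immediate, and $\ker_{\pi_x}=x'$, which is the only step with any subtlety. For the inclusion $x'\leq\ker_{\pi_x}$, I would compute $\pi_x(x')=(x'\vee x')\wedge x=\mathbf{0}$ and invoke the general fact that, for a linear morphism, the kernel is the greatest element sent to $\mathbf{0}$ (because $\overline{\varphi}$ is an isomorphism on $[\ker_\varphi,\mathbf{1}]$). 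For the opposite inclusion, I would apply modularity once more: if $\pi_x(a)=\mathbf{0}$, then $a\vee x'=x'\vee\bigl(x\wedge(a\vee x')\bigr)=x'\vee\pi_x(a)=x'$, so $a\leq x'$.

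The entire argument is a bookkeeping exercise that glues Proposition \ref{idemcomp} and Corollary \ref{idemespi} together, with the small extra step of verifying that $\pi_x$ has the expected kernel and image; I do not anticipate any genuine obstacle.
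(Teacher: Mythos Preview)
Your proof is correct and follows essentially the same strategy as the paper: send an idempotent $\varepsilon$ to the complementary pair determined by its image and kernel, send a pair $(x,x')$ to the projection $\pi_x$, and use Corollary~\ref{idemespi} for one of the round trips. You are in fact more careful than the paper's own argument, which only writes down the two assignments without checking either composite; your explicit verification that $\pi_x(\mathbf{1})=x$ and $\ker_{\pi_x}=x'$ (and your consistent ordering of the pair as $(\varepsilon(\mathbf{1}),\ker_\varepsilon)$ so that the composite really returns $\pi_{\varepsilon(\mathbf{1})}=\varepsilon$) supplies the details the paper omits.
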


\begin{proof}
	Given an idempotent linear endomorphism $\varepsilon:\mathcal{L}\to \mathcal{L}$, we have the pair $(\ker_\varepsilon,\varepsilon(\mathbf{1}))$. On the other hand, if $(x,x')$ is a pair of elements of $\mathcal{L}$ such that $x'$ is a complement of $x$, then the linear endomorphism $\pi_x(a)=(a\vee x')\wedge x$ is idempotent since $\pi_x$ fixes every element in $[\mathbf{0},x]$.
\end{proof}

Given an $R$-module $M$ and an endomorphism $f:M\to M$, there is a linear morphism $f_\ast:\Lambda(M)\to\Lambda(M)$ induced by $f$. Then, there is a homomorphism of monoids with zero, $(-)_\ast:\End_R(M)\to\End_{lin}(\Lambda(M))$. Let $\mathfrak{E}_M$ denote the image of $\End_R(M)$ under $(-)_\ast$. Then $\mathfrak{E}_M$ is a submonoid with zero of $\End_{lin}(\Lambda(M))$.

\begin{defn}
	Let $\mathcal{L}$ be a complete lattice, and  $\mathfrak{m}$ be a submonoid with zero of $\End_{lin}(\mathcal{L})$.
	\begin{itemize}
		\item $\mathcal{L}$ is called \emph{$\mathfrak{m}$-Rickart} if $\ker_\varphi$ has a complement in $\mathcal{L}$ for all $\varphi\in\mathfrak{m}$.
		\item $\mathcal{L}$ is called \emph{dual-$\mathfrak{m}$-Rickart} if $\varphi(\mathbf{1})$ has a complement in $\mathcal{L}$ for all $\varphi\in\mathfrak{m}$.
	\end{itemize}
	If the submonoid we are considering is $\End_{lin}(\mathcal{L})$, we will omit the $\mathfrak{m}$.
\end{defn}

\section{$\mathfrak{m}$-Endoregular lattices}\label{endoreg}

\begin{defn}
	Let $\mathcal{L}$ be a complete modular lattice and  $\mathfrak{m}$ be a submonoid of $\End_{lin}(\mathcal{L})$. We say that $\mathfrak{m}$ is \emph{closed under complements} if for any $\varphi\in\mathfrak{m}$ and any complements $x,y\in\mathcal{L}$ such that $\varphi$ induces a linear isomorphism $\varphi|_x:[\mathbf{0},x]\cong[\mathbf{0},y]$, it follows that $\iota_x(\varphi|_x)^{-1}\pi_y\in\mathfrak{m}$.
\end{defn}

\begin{defn}
    Given a lattice $\mathcal{L}$ and a submonoid with zero,  $\mathfrak{m}\subseteq\End_{lin}(\mathcal{L})$, we will say that $\mathfrak{m}$ \emph{contains all the projections} if $\pi_x\in\mathfrak{m}$ for every complement $x\in\mathcal{L}$.
\end{defn}

\begin{rem}\label{cucproj}
    If a submonoid $\mathfrak{m}\subseteq\End_{lin}(\mathcal{L})$ is closed under complements, then $\mathfrak{m}$ contains all the projections.
\end{rem}

\begin{prop}
	Let $M$ be an $R$-module. Then $\mathfrak{E}_M$ is closed under complements.
\end{prop}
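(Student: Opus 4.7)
The plan is to take an arbitrary $f_\ast \in \mathfrak{E}_M$ coming from $f\in\End_R(M)$, direct summands $X, Y$ of $M$ such that $f_\ast$ restricts to a lattice isomorphism $f_\ast|_X \colon [\mathbf{0},X] \cong [\mathbf{0},Y]$, and explicitly exhibit a module endomorphism $h \in \End_R(M)$ whose induced linear morphism $h_\ast$ equals $\iota_X(f_\ast|_X)^{-1}\pi_Y$.

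First I would verify that the hypothesis forces $f$ to restrict to a module isomorphism $g \colon X \to Y$. Indeed, the kernel of the linear morphism $f_\ast|_X = f_\ast \circ \iota_X$ is exactly $\ker(f)\cap X$; since $f_\ast|_X$ is a lattice isomorphism, this kernel is $\mathbf{0}$, so $f|_X$ is injective, while $f_\ast|_X(X) = Y$ gives $f(X) = Y$. Thus $g := f|_X \colon X \to Y$ is an $R$-module isomorphism, and the inverse lattice isomorphism $(f_\ast|_X)^{-1}$ sends a submodule $N \leq Y$ to $g^{-1}(N) \leq X$.

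Next, using that $Y$ is a direct summand, fix a complement $Y'$ with $M = Y \oplus Y'$ and let $p_Y \colon M \to Y$ be the associated projection. I would then define
\[ h \;=\; \iota_X \circ g^{-1} \circ p_Y \;\in\; \End_R(M), \]
where $\iota_X$ is now the module inclusion $X \hookrightarrow M$. The claim is that $h_\ast = \iota_X(f_\ast|_X)^{-1}\pi_Y$. For any $L \in \Lambda(M)$, one checks the standard identification $p_Y(L) = (L + Y') \cap Y$, which matches the definition $\pi_Y(L) = (L \vee Y') \wedge Y$. Therefore
\[ h_\ast(L) = g^{-1}\bigl(p_Y(L)\bigr) = g^{-1}\bigl(\pi_Y(L)\bigr) = \iota_X(f_\ast|_X)^{-1}\pi_Y(L), \]
so $\iota_X(f_\ast|_X)^{-1}\pi_Y = h_\ast \in \mathfrak{E}_M$, as required.

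The main obstacle is conceptually small but notationally delicate: one must carefully distinguish the lattice-theoretic symbols $\iota_X, \pi_Y, (f_\ast|_X)^{-1}$ from their module-theoretic counterparts, and in particular justify the identity $p_Y(L) = (L+Y')\cap Y$, which is the bridge between the two viewpoints. Once that identification is in hand, the rest of the argument is essentially a bookkeeping check that the module endomorphism $h$ induces the prescribed linear morphism on $\Lambda(M)$.
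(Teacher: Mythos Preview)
Your proposal is correct and follows essentially the same approach as the paper: both show that the lattice isomorphism $f_\ast|_X$ forces $f|_X\colon X\to Y$ to be a module isomorphism, then build the required endomorphism as the composite of the module inclusion $X\hookrightarrow M$, the inverse $(f|_X)^{-1}$, and a module projection $M\to Y$. You are somewhat more explicit in verifying the bridge identity $p_Y(L)=(L+Y')\cap Y=\pi_Y(L)$, which the paper leaves implicit in the line $(i(g\oplus 0))_\ast=\iota_N g_\ast\pi_L$.
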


\begin{proof}
	Let $f\in\End_R(M)$ such that $f_\ast:[0,N]\to[0,L]$ is a linear isomorphism with $N$ and $L$ direct summands of $M$. Since $f_\ast(A)=f(A)=0$ if and only if $A=0$, $f|_N$ is injective. On the other hand $f(N)=L$, therefore $f|_N:N\to L$ is an isomorphism. Let $g:L\to N$ denote the inverse of $f|_N$. Consider the $R$-homomorphism $i(g\oplus 0):M\to M$ where $i:N\hookrightarrow M$ is the canonical inclusion. Then $(i(g\oplus0))_\ast=\iota_Ng_\ast\pi_L$. Thus $\iota_N(f_\ast)^{-1}\pi_L\in\mathfrak{E}_M$.
\end{proof}

\begin{prop}\label{reg}
	Let $\mathcal{L}$ be a complete modular lattice and $\mathfrak{m}$ be a submonoid of $\End_{lin}(\mathcal{L})$ closed under complements. The following conditions are equivalent for $\varphi\in\mathfrak{m}$:
	\begin{enumerate}[label=\emph{(\alph*)}]
		\item $\ker_\varphi$ and $\varphi(\mathbf{1})$ have a complement in $\mathcal{L}$.
		\item There exists $\psi\in\mathfrak{m}$ such that $\varphi=\varphi\psi\varphi$.
	\end{enumerate}
	Moreover, $\psi\varphi(\mathbf{1})$ is a complement of $\ker_\varphi$, and $\ker_{\psi\varphi}$ is a complement of $\varphi(\mathbf{1})$.
\end{prop}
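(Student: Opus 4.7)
The plan is to prove the two implications separately and derive the moreover part from the explicit witnesses produced along the way. For (a)$\Rightarrow$(b) I build $\psi$ explicitly, using the hypothesis that $\mathfrak{m}$ is closed under complements; for (b)$\Rightarrow$(a) I exploit that $\psi\varphi$ and $\varphi\psi$ are idempotents and apply Proposition \ref{idemcomp}.

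For (a)$\Rightarrow$(b), fix a complement $x$ of $\ker_\varphi$; by hypothesis, $\varphi(\mathbf{1})$ is itself a complement. Modularity gives a lattice isomorphism $[\mathbf{0},x]\to[\ker_\varphi,\mathbf{1}]$ via $a\mapsto a\vee\ker_\varphi$ (inverse $b\mapsto b\wedge x$), and composing with $\overline{\varphi}$ from Definition \ref{linmor}(2) yields a linear isomorphism $\varphi|_x:[\mathbf{0},x]\to[\mathbf{0},\varphi(\mathbf{1})]$ acting as $a\mapsto\varphi(a)$. Closure under complements then places $\psi:=\iota_x(\varphi|_x)^{-1}\pi_{\varphi(\mathbf{1})}$ in $\mathfrak{m}$. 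The identity $\varphi\psi\varphi=\varphi$ then follows from a three-step chase on a general $a\in\mathcal{L}$: $\pi_{\varphi(\mathbf{1})}$ fixes $\varphi(a)\in[\mathbf{0},\varphi(\mathbf{1})]$; the inverse isomorphism sends it to $c:=(a\vee\ker_\varphi)\wedge x$; and reapplying $\varphi$ recovers $\varphi(c)=\varphi(c\vee\ker_\varphi)=\varphi(a\vee\ker_\varphi)=\varphi(a)$, where the middle equality uses modularity (so $c\vee\ker_\varphi=(a\vee\ker_\varphi)\wedge(x\vee\ker_\varphi)=a\vee\ker_\varphi$) and the outer ones use Definition \ref{linmor}(1).

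For (b)$\Rightarrow$(a), both $\psi\varphi$ and $\varphi\psi$ are idempotent since $(\psi\varphi)^2=\psi(\varphi\psi\varphi)=\psi\varphi$ and symmetrically for $\varphi\psi$. I then identify their kernels and images with those of $\varphi$: if $\psi\varphi(a)=\mathbf{0}$, then $\varphi(a)=\varphi\psi\varphi(a)=\varphi(\mathbf{0})=\mathbf{0}$, using that linear morphisms send $\mathbf{0}$ to $\mathbf{0}$ by Remark \ref{linmorjoins}; hence $\ker_{\psi\varphi}=\ker_\varphi$. A symmetric sandwich gives $\varphi\psi(\mathbf{1})=\varphi(\mathbf{1})$ (one inclusion by monotonicity, the other from $\varphi(\mathbf{1})=\varphi\psi\varphi(\mathbf{1})\leq\varphi\psi(\mathbf{1})$). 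Proposition \ref{idemcomp} applied to $\psi\varphi$ realizes $\psi\varphi(\mathbf{1})$ as a complement of $\ker_\varphi$, and applied to $\varphi\psi$ realizes $\ker_{\varphi\psi}$ as a complement of $\varphi(\mathbf{1})$; these yield both statements of the moreover clause (reading the second as $\ker_{\varphi\psi}$, which is what the symmetry of the construction forces).

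The main obstacle I foresee is the bookkeeping in (a)$\Rightarrow$(b): verifying $\varphi\psi\varphi=\varphi$ requires threading the concrete formulas for $\iota_x$, $(\varphi|_x)^{-1}$, and $\pi_{\varphi(\mathbf{1})}$ together and invoking Definition \ref{linmor}(1) and the modular law at precisely the right moments. By contrast, once the idempotents $\psi\varphi$ and $\varphi\psi$ are in hand, (b)$\Rightarrow$(a) is formal manipulation within the idempotent machinery already set up in Proposition \ref{idemcomp} and Corollary \ref{idemespi}.
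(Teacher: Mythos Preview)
Your proof is correct and follows essentially the same strategy as the paper: build $\psi=\iota_x(\varphi|_x)^{-1}\pi_{\varphi(\mathbf{1})}$ for (a)$\Rightarrow$(b), and exploit the idempotents $\psi\varphi$, $\varphi\psi$ together with Proposition~\ref{idemcomp} for (b)$\Rightarrow$(a). Your (b)$\Rightarrow$(a) is in fact slightly cleaner: you establish $\ker_{\psi\varphi}=\ker_\varphi$ outright (via $\varphi(a)=\varphi\psi\varphi(a)$), so Proposition~\ref{idemcomp} applied to $\psi\varphi$ immediately gives that $\psi\varphi(\mathbf{1})$ complements $\ker_\varphi$; the paper instead argues the two halves separately, using $\overline{\varphi}$ for the join and the inequality $\ker_\varphi\leq\ker_{\psi\varphi}$ for the meet. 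You are also right that the ``moreover'' clause should read $\ker_{\varphi\psi}$ rather than $\ker_{\psi\varphi}$---the paper's own proof shows exactly $\ker_{\varphi\psi}$ is the complement of $\varphi(\mathbf{1})$, and your observation that $\ker_{\psi\varphi}=\ker_\varphi$ confirms the printed version cannot hold in general.
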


\begin{proof}
	(a)$\Rightarrow$(b) By hypothesis, there exists $x\in\mathcal{L}$ such that $\mathbf{1}=\ker_\varphi\vee x$ and $\mathbf{0}=\ker_\varphi\wedge x$. Then, $\varphi$ induces a linear isomorphism $\varphi|_x:[\mathbf{0},x]\to[\mathbf{0},\varphi(\mathbf{1})]$. Note that $\varphi(x)=\varphi(\mathbf{1})$. Define $\psi:\mathcal{L}\to\mathcal{L}$ as $\psi=\iota_x(\varphi|_x)^{-1}\pi_{\varphi(\mathbf{1})}$. By hypothesis, $\psi\in\mathfrak{m}$. Hence	\[\varphi\psi\varphi(a)=\varphi\iota_x(\varphi|_x)^{-1}\pi_{\varphi(\mathbf{1})}\varphi(a)=\varphi\iota_x(\varphi|_x)^{-1}\varphi(a)=\varphi(a).\]
	
	(b)$\Rightarrow$(a) Suppose that there exists $\psi\in\mathfrak{m}$ such that $\varphi=\varphi\psi\varphi$. Note that $\varphi\psi$ and $\psi\varphi$ are idempotent elements of $\mathfrak{m}$. We have that 
	\[\varphi\psi(\mathbf{1})\leq\varphi(\mathbf{1})=\varphi\psi(\varphi(\mathbf{1}))\leq\varphi\psi(\mathbf{1}).\]
	This implies that $\varphi\psi(\mathbf{1})=\varphi(\mathbf{1})$. It follows from Proposition \ref{idemcomp} that $\mathbf{1}=\varphi\psi(\mathbf{1})\vee\ker_{\varphi\psi}=\varphi(\mathbf{1})\vee\ker_{\varphi\psi}$ and $\mathbf{0}=\varphi\psi(\mathbf{1})\wedge\ker_{\varphi\psi}=\varphi(\mathbf{1})\wedge\ker_{\varphi\psi}$. Therefore $\varphi(\mathbf{1})$ is a complement in $\mathcal{L}$. Now, let $\overline{\varphi}:[\ker_\varphi,\mathbf{1}]\to[\mathbf{0},\varphi(\mathbf{1})]$ be the isomorphism induced by $\varphi$. Then,
	\[\overline{\varphi}(\psi\varphi(\mathbf{1})\vee\ker_\varphi)=\varphi(\psi\varphi(\mathbf{1})\vee\ker_\varphi)=\varphi\psi\varphi(\mathbf{1})=\varphi(\mathbf{1})=\overline{\varphi}(\mathbf{1}).\]
	Thus, $\psi\varphi(\mathbf{1})\vee\ker_\varphi=\mathbf{1}$. Note that $\ker_\varphi\leq\ker_{\psi\varphi}$ and $\ker_{\psi\varphi}\wedge\psi\varphi(\mathbf{1})=\mathbf{0}$. Hence, $\ker_\varphi\wedge\psi\varphi(\mathbf{1})=\mathbf{0}$.
\end{proof}

\begin{cor}\label{regM}
    Let $M$ be an $R$-module. The following conditions are equivalent for $f\in\End_R(M)$:
    	\begin{enumerate}[label=\emph{(\alph*)}]
		\item $\Ker f$ and $f(M)$ are direct summands of $M$.
		\item There exists $g\in\End_R(M)$ such that $f(N)=fgf(N)$ for all $N\leq M$.
	\end{enumerate}
	Moreover, $gf(M)$ is a complement of $\Ker f$, and $\Ker fg$ is a complement of $f(M)$.
\end{cor}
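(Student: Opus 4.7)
The plan is to apply Proposition \ref{reg} directly to the lattice $\mathcal{L}=\Lambda(M)$ with the submonoid $\mathfrak{m}=\mathfrak{E}_M$. The prerequisites are already in hand: $\Lambda(M)$ is a complete modular lattice whose complements are precisely the direct summands of $M$, and the proposition immediately preceding the corollary establishes that $\mathfrak{E}_M$ is closed under complements.

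First I would identify the lattice-theoretic data attached to $f\in\End_R(M)$. The induced linear endomorphism $f_\ast\in\mathfrak{E}_M$ satisfies $\ker_{f_\ast}=\Ker f$ and $f_\ast(\mathbf{1})=f(M)$, so condition (a) of the corollary is literally condition (a) of Proposition \ref{reg} applied to $f_\ast$. Next, using that $(-)_\ast\colon\End_R(M)\to\End_{lin}(\Lambda(M))$ is a monoid homomorphism, one has $f_\ast g_\ast f_\ast=(fgf)_\ast$; and since equality in $\End_{lin}(\Lambda(M))$ is pointwise on submodules, the identity $f_\ast=f_\ast g_\ast f_\ast$ unwinds exactly to $f(N)=fgf(N)$ for every $N\leq M$, which is condition (b) of the corollary.

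The equivalence (a)$\Leftrightarrow$(b) then follows at once from Proposition \ref{reg}, and the "moreover" clause transports verbatim: the element $\psi\varphi(\mathbf{1})$ in that proposition becomes $gf(M)$, giving a direct summand complement to $\Ker f$, while $\ker_{\psi\varphi}$ becomes the kernel of the corresponding composite $R$-endomorphism, giving a direct summand complement to $f(M)$. There is no real obstacle here; the only point meriting a sentence of care is the observation that the lattice-level identity $f_\ast=f_\ast g_\ast f_\ast$, being pointwise on $\Lambda(M)$, is equivalent to the formula $f(N)=fgf(N)$ for all submodules $N$, which is immediate from the definition of $(-)_\ast$.
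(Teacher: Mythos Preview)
Your proposal is correct and is exactly the intended argument: the paper states Corollary~\ref{regM} without proof, as a direct specialization of Proposition~\ref{reg} to $\mathcal{L}=\Lambda(M)$ and $\mathfrak{m}=\mathfrak{E}_M$, using that $\mathfrak{E}_M$ is closed under complements. Your identification of $\ker_{f_\ast}=\Ker f$, $f_\ast(\mathbf{1})=f(M)$, and the unwinding of $f_\ast=f_\ast g_\ast f_\ast$ as $f(N)=fgf(N)$ for all $N\leq M$ is precisely what is needed.
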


\begin{defn}
A monoid $E$ is said to be \emph{regular} if, for any $\varphi\in E$, there exists $\psi\in E$ such that $\varphi=\varphi\psi\varphi$.
\end{defn}

\begin{defn}\label{defendoreg}
	Let $\mathcal{L}$ be a complete lattice and $\mathfrak{m}$ be a submonoid with zero of $\End_{lin}(\mathcal{L})$. The lattice $\mathcal{L}$ is called \emph{$\mathfrak{m}$-endoregular} if the monoid $\mathfrak{m}$ is regular. If the submonoid we are considering is $\End_{lin}(\mathcal{L})$, we will omit the $\mathfrak{m}$.
\end{defn}

An $R$-module $M$ is said to be \emph{endoregular} if $\End_{R}(M)$ is a von Neumann regular ring \cite{leemodules}. This definition can be compared with Definition \ref{defendoreg} as follows:

\begin{cor}
The following conditions are equivalent for an $R$-module $M$:
\begin{enumerate}[label=\emph{(\alph*)}]
    \item $M$ is endoregular.
    \item $\Lambda(M)$ is a $\mathfrak{E}_M$-endoregular lattice.
    \item For all $f\in\End_R(M)$, there exists $g\in\End_R(M)$ such that $f(N)=fgf(N)$ for all $N\leq M$.
    \item $\Ker f$ and $f(M)$ are direct summands of $M$ for all $f\in\End_R(M)$.
\end{enumerate}
\end{cor}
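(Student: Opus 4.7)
My plan is to exploit the natural map $(-)_\ast\colon\End_R(M)\to\mathfrak{E}_M$ sending $f$ to $f_\ast$ and to apply Corollary \ref{regM} in order to pass between module-theoretic and lattice-theoretic statements. Before anything else, I would verify that this map is an injective monoid homomorphism: composition clearly maps to composition, and if $f_\ast=0$ then in particular $f(M)=0$, so $f=0$. Consequently, for $f,g\in\End_R(M)$, the equality $fgf=f$ in $\End_R(M)$ is equivalent to $(fgf)_\ast=f_\ast$ in $\mathfrak{E}_M$, and this is equivalent to $fgf(N)=f(N)$ for every $N\leq M$, since two linear morphisms agree precisely when they agree on all elements.

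With this translation at hand, the equivalence (a)$\Leftrightarrow$(b) follows immediately from the definition of $\mathfrak{E}_M$-endoregular: $\mathfrak{E}_M$ is a regular monoid iff for every $f$ there is $g\in\End_R(M)$ with $f_\ast g_\ast f_\ast=f_\ast$, and by the injectivity of $(-)_\ast$ this is the same as $\End_R(M)$ being a regular monoid, i.e., $M$ being endoregular. The equivalence (b)$\Leftrightarrow$(c) then amounts to rewriting $f_\ast g_\ast f_\ast=f_\ast$ as the pointwise statement $fgf(N)=f(N)$ for all $N\leq M$. Finally, (c)$\Leftrightarrow$(d) is obtained by quantifying Corollary \ref{regM} over all $f\in\End_R(M)$.

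I do not anticipate any serious obstacle here. The preceding proposition already shows that $\mathfrak{E}_M$ is closed under complements, which is exactly the hypothesis required to invoke Corollary \ref{regM}, and the only nontrivial ingredient beyond that is the injectivity of $(-)_\ast$. The whole argument is a matter of carefully re-expressing one condition in three equivalent languages: the endomorphism ring $\End_R(M)$, the endomorphism monoid $\mathfrak{E}_M$ of the submodule lattice, and the submodule lattice $\Lambda(M)$ itself.
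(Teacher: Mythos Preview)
Your argument has a genuine gap: the map $(-)_\ast\colon\End_R(M)\to\mathfrak{E}_M$ is \emph{not} injective in general. For instance, take $M=\mathbb{Z}$ as a $\mathbb{Z}$-module; then $\mathrm{id}$ and $-\mathrm{id}$ are distinct endomorphisms, yet $(\mathrm{id})_\ast=(-\mathrm{id})_\ast$ since $-N=N$ for every subgroup $N\leq\mathbb{Z}$. Your ``verification'' of injectivity only checks that $f_\ast=0$ forces $f=0$; for a purely multiplicative monoid homomorphism this does not imply injectivity, because one cannot reduce $f_\ast=g_\ast$ to $(f-g)_\ast=0$. Consequently your direct implication (b)$\Rightarrow$(a) breaks down: from $f_\ast g_\ast f_\ast=f_\ast$ you cannot conclude $fgf=f$ in $\End_R(M)$.

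The remainder of your scheme is sound: (a)$\Rightarrow$(b) only uses that $(-)_\ast$ is a monoid homomorphism, (b)$\Leftrightarrow$(c) is a tautology, and (c)$\Leftrightarrow$(d) is exactly Corollary~\ref{regM} quantified over all $f$. What is missing is an independent route back to (a). The paper closes the cycle via (d)$\Rightarrow$(a), citing \cite[Proposition~2.3]{leemodules}. Alternatively one can argue directly at the level of modules: given $f$, write $M=\Ker f\oplus N$ and $M=f(M)\oplus L$, observe that $f|_N\colon N\to f(M)$ is an isomorphism, and define $g$ to be $\iota_N\circ(f|_N)^{-1}$ on $f(M)$ and $0$ on $L$; then $fgf=f$ as honest $R$-linear maps, not merely on submodules.
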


\begin{proof}
	(a)$\Rightarrow$(b) Let $f_\ast\in\mathfrak{E}_M$. By hypothesis, there exists $g\in\End_R(M)$ such that $f=fgf$. Then $f_\ast=(fgf)_\ast=f_\ast g_\ast f_\ast$. Thus $\mathfrak{E}_M$ is a regular monoid.
	
	(b)$\Rightarrow$(c) It is clear.
	
	(c)$\Rightarrow$(d) It follows from Corollary \ref{regM}. 
	
	(d)$\Rightarrow$(a) It follows from \cite[Proposition 2.3]{leemodules}
\end{proof}

We mention the following definitions taken from \cite{medina2022mbaer}.

\begin{defn}\label{defc2}
	Let $\mathcal{L}$ be a complete modular lattice, $a,x,x'\in\mathcal{L}$ with $x'$ a complement of $x$. It is said that $\mathcal{L}$ satisfies \emph{$\mathfrak{m}$-$C_2$ condition} if whenever there is an isomorphism $\theta:[\mathbf{0},x]\overset{\cong}{\rightarrow}[\mathbf{0},a]$, and $\iota_a\theta\pi_{x}$ is in $\mathfrak{m}$, this implies that $a$ is a complement.
\end{defn}

\begin{defn}
	Let $\mathcal{L}$ be a complete modular lattice, $a,x\in\mathcal{L}$, and $\mathfrak{m}\subseteq\End_{lin}(\mathcal{L})$ be a submonoid. It is said that $\mathcal{L}$ satisfies \emph{$\mathfrak{m}$-$D_2$ condition} if whenever there is an isomorphism $\theta:[a,\mathbf{1}]\overset{\cong}{\rightarrow}[\mathbf{0},x]$, with $x$ a complement in $\mathcal{L}$ and $\iota_x\theta\rho_a \in \mathfrak{m}$,  then $a$ is a complement.
\end{defn}

\begin{thm}\label{kerimgsumm}
	Let $\mathcal{L}$ be a complete modular lattice and $\mathfrak{m}$ be a submonoid of $\End_{lin}(\mathcal{L})$ closed under complements. The following conditions are equivalent:
	\begin{enumerate}[label=\emph{(\alph*)}]
		\item $\mathcal{L}$ is $\mathfrak{m}$-endoregular.
		\item $\mathcal{L}$ is $\mathfrak{m}$-Rickart and satisfies the condition $\mathfrak{m}$-$C_2$.
		\item $\mathcal{L}$ is dual-$\mathfrak{m}$-Rickart and satisfies the condition $\mathfrak{m}$-$D_2$.
		\item $\ker_\varphi$ and $\varphi(\mathbf{1})$ have a complement in $\mathcal{L}$ for every $\varphi\in\mathfrak{m}$.
	\end{enumerate}
\end{thm}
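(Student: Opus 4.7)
The plan is to use condition (d) as the pivot. Proposition \ref{reg}, applied to every $\varphi\in\mathfrak{m}$, gives the equivalence (a)$\Leftrightarrow$(d) immediately, since $\mathfrak{m}$ is assumed closed under complements. What remains is to show (b)$\Leftrightarrow$(d) and (c)$\Leftrightarrow$(d); these two arguments are mutually dual.

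For (d)$\Rightarrow$(b), the $\mathfrak{m}$-Rickart property is a direct reading of (d). To verify $\mathfrak{m}$-$C_2$, I would take any isomorphism $\theta:[\mathbf{0},x]\to[\mathbf{0},a]$ with $x$ a complement and $\varphi:=\iota_a\theta\pi_x\in\mathfrak{m}$, and compute $\varphi(\mathbf{1})=\iota_a\theta\pi_x(\mathbf{1})=\iota_a\theta(x)=a$; then (d) says $a=\varphi(\mathbf{1})$ has a complement. For the converse (b)$\Rightarrow$(d), given $\varphi\in\mathfrak{m}$, I would use $\mathfrak{m}$-Rickart to choose a complement $x$ of $\ker_\varphi$, which makes $\varphi|_x:[\mathbf{0},x]\to[\mathbf{0},\varphi(\mathbf{1})]$ an isomorphism. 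The key identity to verify is
\[
\iota_{\varphi(\mathbf{1})}(\varphi|_x)\pi_x \;=\;\varphi,
\]
because then this morphism lies in $\mathfrak{m}$ by hypothesis, and $\mathfrak{m}$-$C_2$ applied to $\theta=\varphi|_x$ and $a=\varphi(\mathbf{1})$ forces $\varphi(\mathbf{1})$ to be a complement. Combined with the kernel already being a complement, this gives (d).

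For (d)$\Rightarrow$(c) and (c)$\Rightarrow$(d) the argument is dual, using the isomorphism $\overline{\varphi}:[\ker_\varphi,\mathbf{1}]\to[\mathbf{0},\varphi(\mathbf{1})]$ and the morphism $\iota_{\varphi(\mathbf{1})}\overline{\varphi}\rho_{\ker_\varphi}$. In the direction (d)$\Rightarrow$(c), dual-$\mathfrak{m}$-Rickart is immediate, and $\mathfrak{m}$-$D_2$ follows by computing the image of $\iota_x\theta\rho_a$ and noting it is a complement by (d). In the direction (c)$\Rightarrow$(d), given $\varphi\in\mathfrak{m}$, dual-$\mathfrak{m}$-Rickart gives $\varphi(\mathbf{1})$ a complement, and then $\mathfrak{m}$-$D_2$ applied to $\theta=\overline{\varphi}$ together with the identity $\iota_{\varphi(\mathbf{1})}\overline{\varphi}\rho_{\ker_\varphi}=\varphi$ yields that $\ker_\varphi$ is a complement.

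The only non-formal step is the identity $\iota_{\varphi(\mathbf{1})}(\varphi|_x)\pi_x=\varphi$ (and its dual). I expect this to be the main, though still routine, obstacle: for $b\in\mathcal{L}$, $\pi_x(b)=(b\vee\ker_\varphi)\wedge x$, and by modularity $((b\vee\ker_\varphi)\wedge x)\vee\ker_\varphi=b\vee\ker_\varphi$ because $\ker_\varphi\leq b\vee\ker_\varphi$ and $x\vee\ker_\varphi=\mathbf{1}$. Applying the kernel property $\varphi(b)=\varphi(b\vee\ker_\varphi)$ and the fact that $\overline{\varphi}$ agrees with $\varphi|_x$ after the canonical isomorphism between $[\mathbf{0},x]$ and $[\ker_\varphi,\mathbf{1}]$, the composite collapses to $\varphi(b)$. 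Once this identity (and its $\rho$-dual) is established, the four implications link together into the desired cycle.
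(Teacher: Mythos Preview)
Your proposal is correct and follows essentially the same route as the paper: pivot through (d), invoke Proposition~\ref{reg} for (a)$\Leftrightarrow$(d), and for (b)$\Leftrightarrow$(d) establish the identity $\iota_{\varphi(\mathbf{1})}(\varphi|_x)\pi_x=\varphi$ exactly as the authors do (they write $\theta=\overline{\varphi}(\ker_\varphi\vee\_)$ but it is the same map), with the dual argument for (c)$\Leftrightarrow$(d). One small slip: in your (d)$\Rightarrow$(c) sketch you should compute the \emph{kernel} of $\iota_x\theta\rho_a$ (which is $a$) rather than its image, since $\mathfrak{m}$-$D_2$ asks that $a$ be a complement.
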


\begin{proof}
	(a)$\Leftrightarrow$(d) follows from Proposition \ref{reg}.
	
	(b)$\Rightarrow$(d) Let $\varphi\in\mathfrak{m}$. Since $\mathcal{L}$ is $\mathfrak{m}$-Rickart, $\ker_\varphi$ is a complement. Let $x\in\mathcal{L}$ be a complement of $\ker_\varphi$. Then, there is an isomorphism $\theta=\overline{\varphi}(\ker_{\varphi}\vee\_):[\mathbf{0},x]\to[\ker_\varphi,\mathbf{1}]\to[\mathbf{0},\varphi(1)]$. We claim that $\iota_{\varphi(1)}\theta\pi_x\in\mathfrak{m}$. Let $y\in\mathcal{L}$. It follows that
	\[\iota_{\varphi(1)}\theta\pi_x(y)=\iota_{\varphi(1)}\theta((y\vee\ker_\varphi)\wedge x)=\iota_{\varphi(1)}\overline{\varphi}(\ker_{\varphi}\vee\_)((y\vee\ker_\varphi)\wedge x)\]
	\[=\iota_{\varphi(1)}\overline{\varphi}(y\vee\ker_\varphi)=\varphi(y).\]
	Therefore $\iota_{\varphi(1)}\theta\pi_x=\varphi\in\mathfrak{m}$. This implies, by the $\mathfrak{m}$-$C_2$ condition, that $\varphi(\mathbf{1})$ is a complement in $\mathcal{L}$.
	
	(d)$\Rightarrow$(b) By hypothesis $\mathcal{L}$ is $\mathfrak{m}$-Rickart. Let $a,x,x'\in\mathcal{L}$ with $x'$ a complement of $x$. Suppose that there exists an isomorphism  $\theta:[\mathbf{0},x]\to[\mathbf{0},a]$ with $\iota_a\theta\pi_x\in\mathfrak{m}$. Let $\varphi$ denote the composition $\iota_a\theta\pi_x$, then $\varphi(\mathbf{1})=a$. By hypothesis $a$ is a complement. Thus $\mathcal{L}$ satisfies the $\mathfrak{m}$-$C_2$ condition.
	(c)$\Leftrightarrow(d)$ can be proved similarly. 
\end{proof}

\begin{cor}
    Let $\mathcal{L}$ be a complete modular lattice and $\mathfrak{m}$ be a submonoid of $\End_{lin}(\mathcal{L})$ closed under complements. The following conditions are equivalent:
    \begin{enumerate}[label=\emph{(\alph*)}]
		\item $\mathcal{L}$ is $\mathfrak{m}$-endoregular.
		\item $\mathcal{L}$ is $\mathfrak{m}$-Rickart and dual-$\mathfrak{m}$-Rickart.
    \end{enumerate}
\end{cor}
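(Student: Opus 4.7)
The plan is to observe that this corollary is an immediate repackaging of the equivalence (a)$\Leftrightarrow$(d) already established in Theorem \ref{kerimgsumm}. By definition, $\mathcal{L}$ is $\mathfrak{m}$-Rickart exactly when $\ker_\varphi$ has a complement for every $\varphi\in\mathfrak{m}$, and dual-$\mathfrak{m}$-Rickart exactly when $\varphi(\mathbf{1})$ has a complement for every $\varphi\in\mathfrak{m}$. The conjunction of these two conditions is therefore precisely condition (d) of Theorem \ref{kerimgsumm}, namely that both $\ker_\varphi$ and $\varphi(\mathbf{1})$ have complements for every $\varphi\in\mathfrak{m}$.

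Consequently, I would simply invoke Theorem \ref{kerimgsumm} to conclude. For (a)$\Rightarrow$(b), assume $\mathcal{L}$ is $\mathfrak{m}$-endoregular; by the theorem, condition (d) holds, so in particular every $\ker_\varphi$ and every $\varphi(\mathbf{1})$ has a complement, giving both the $\mathfrak{m}$-Rickart and dual-$\mathfrak{m}$-Rickart properties. Conversely, for (b)$\Rightarrow$(a), if $\mathcal{L}$ is both $\mathfrak{m}$-Rickart and dual-$\mathfrak{m}$-Rickart, then condition (d) of Theorem \ref{kerimgsumm} is satisfied, and hence $\mathcal{L}$ is $\mathfrak{m}$-endoregular.

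There is no real obstacle here; the only thing to be careful about is that the hypothesis that $\mathfrak{m}$ is closed under complements is needed to apply Theorem \ref{kerimgsumm} (it was used in the proof of (a)$\Leftrightarrow$(d) via Proposition \ref{reg}), and this hypothesis is already assumed in the statement of the corollary. So the proof is essentially a one-line citation of the theorem, and I would present it as such rather than reproving anything.
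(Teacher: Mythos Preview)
Your argument is correct and, in fact, more direct than the paper's. Both you and the paper handle (a)$\Rightarrow$(b) by invoking Theorem~\ref{kerimgsumm}. For (b)$\Rightarrow$(a), however, the paper takes a slightly different path: it cites external results from \cite{medina2022mbaer} to the effect that an $\mathfrak{m}$-Rickart lattice satisfies $\mathfrak{m}$-$D_2$ and a dual-$\mathfrak{m}$-Rickart lattice satisfies $\mathfrak{m}$-$C_2$, and then appeals to the implications (b)$\Rightarrow$(a) or (c)$\Rightarrow$(a) of Theorem~\ref{kerimgsumm}. Your observation that the conjunction of $\mathfrak{m}$-Rickart and dual-$\mathfrak{m}$-Rickart is \emph{literally} condition (d) of Theorem~\ref{kerimgsumm} makes the detour through $\mathfrak{m}$-$C_2$ and $\mathfrak{m}$-$D_2$ unnecessary and keeps the corollary self-contained. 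The paper's route does have the minor side benefit of recording that those $C_2$/$D_2$ conditions hold, but for the purpose of proving this corollary your one-line citation of (a)$\Leftrightarrow$(d) is cleaner.
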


\begin{proof}
    (a)$\Rightarrow$(b) follows from Theorem \ref{kerimgsumm}. For (b)$\Rightarrow$(a), we have that an $\mathfrak{m}$-Rickart lattice satisfies $\mathfrak{m}$-$D_2$ condition \cite[Proposition 3.42]{medina2022mbaer} and a dual-$\mathfrak{m}$-Rickart lattice satisfies $\mathfrak{m}$-$C_2$ condition \cite[Proposition 3.43]{medina2022mbaer}.
\end{proof}

\begin{cor}
	Let $\mathcal{L}$ be an indecomposable modular lattice, that is, $C(\mathcal{L})=\{\mathbf{0},\mathbf{1}\}$ and $\mathfrak{m}\subseteq\End_{lin}(\mathcal{L})$ a submonoid. The following conditions are equivalent:
	\begin{enumerate}[label=\emph{(\alph*)}]
		\item $\mathcal{L}$ is $\mathfrak{m}$-endoregular.
		\item Every $0\neq \varphi\in\mathfrak{m}$ has an inverse.
	\end{enumerate}
\end{cor}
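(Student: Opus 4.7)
The plan is to leverage Proposition \ref{reg} together with the indecomposability hypothesis $C(\mathcal{L})=\{\mathbf{0},\mathbf{1}\}$ to force every nonzero element of $\mathfrak{m}$ to be a lattice automorphism, and then use cancellation to turn a regular pseudoinverse into a genuine two-sided inverse.

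For the direction (a)$\Rightarrow$(b), I would fix $\varphi\in\mathfrak{m}$ with $\varphi\neq 0$. By regularity there exists $\psi\in\mathfrak{m}$ with $\varphi=\varphi\psi\varphi$. Invoking the implication (b)$\Rightarrow$(a) of Proposition \ref{reg} (whose proof does \emph{not} use the ``closed under complements'' assumption, unlike the reverse implication), I get that both $\ker_\varphi$ and $\varphi(\mathbf{1})$ are complements in $\mathcal{L}$. Indecomposability then restricts them to $\{\mathbf{0},\mathbf{1}\}$. I would rule out the degenerate options using the induced isomorphism $\overline{\varphi}\colon[\ker_\varphi,\mathbf{1}]\to[\mathbf{0},\varphi(\mathbf{1})]$: if $\ker_\varphi=\mathbf{1}$ or $\varphi(\mathbf{1})=\mathbf{0}$, then the source or target of $\overline{\varphi}$ is a one-element interval, forcing the other side to be as well and giving $\varphi=0$, a contradiction. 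Hence $\ker_\varphi=\mathbf{0}$ and $\varphi(\mathbf{1})=\mathbf{1}$, so $\varphi=\overline{\varphi}$ is a lattice automorphism of $\mathcal{L}$. Now $\varphi$ is invertible in $\End_{lin}(\mathcal{L})$, so cancellation in the equation $\varphi\psi\varphi=\varphi$ (on the left and on the right by $\varphi^{-1}$) gives $\varphi\psi=\psi\varphi=\mathrm{id}$. Thus $\psi\in\mathfrak{m}$ is a two-sided inverse of $\varphi$.

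The converse direction (b)$\Rightarrow$(a) is almost immediate. If $\varphi=0$, then $\varphi=\varphi\cdot 0\cdot \varphi$ and $0\in\mathfrak{m}$ since $\mathfrak{m}$ is a submonoid with zero. Otherwise $\varphi$ admits an inverse $\psi\in\mathfrak{m}$, and $\varphi\psi\varphi=\varphi\cdot\mathrm{id}=\varphi$ shows the required regular relation. In either case $\mathfrak{m}$ is regular, establishing the $\mathfrak{m}$-endoregularity of $\mathcal{L}$.

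The only delicate points are the bookkeeping around the degenerate cases of the kernel and image, and the observation that we are specifically using the direction of Proposition \ref{reg} that is free of the ``closed under complements'' hypothesis (which the corollary does not assume). Beyond that, there is no serious obstacle: once $\varphi\neq 0$ is shown to be an automorphism, the passage from a regular pseudoinverse to an actual inverse is pure monoid cancellation.
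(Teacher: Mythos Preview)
Your argument is correct. The paper states this corollary without proof, treating it as immediate from the preceding results; your write-up is exactly the natural filling-in, using the (b)$\Rightarrow$(a) direction of Proposition~\ref{reg} (which, as you rightly point out, does not require $\mathfrak{m}$ to be closed under complements) to force $\ker_\varphi=\mathbf{0}$ and $\varphi(\mathbf{1})=\mathbf{1}$, and then cancelling in $\varphi\psi\varphi=\varphi$ to see that the regular pseudoinverse $\psi\in\mathfrak{m}$ is already a two-sided inverse.
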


\begin{defn}
	A complete lattice $\mathcal{L}$ is \emph{compact} if whenever $\mathbf{1}=\bigvee_{i\in I}a_i$, there exists a finite subset $F\subseteq I$ such that $\mathbf{1}=\bigvee_{i\in F}a_i$. An element $c$ in a complete lattice is \emph{compact} if $[\mathbf{0},c]$ is a compact lattice.
\end{defn}

\begin{rem}
	Given an $R$-module $M$, the lattice $\Lambda(M)$ is compact if and only if $M$ is finitely generated. Similarly, a submodule is compact if and only if it is finitely generated. 
\end{rem}

\begin{defn}
	A complete lattice $\mathcal{L}$ is \emph{von Neumann regular} if every compact element of $\mathcal{L}$ has a complement.
\end{defn}

\begin{rem}
	For an $R$-module $M$, the lattice $\Lambda(M)$ is von Neumann regular if and only if every finitely generated (cyclic) submodule of $M$ is a direct summand. In \cite{tuganbaevrings}, A. Tuganbaev calls these modules \emph{regular}. When $M=R$, this definition agrees with that of a \emph{von Neumann regular ring}.
\end{rem}

\begin{lemma}\label{imgcomp}
	Let $\mathcal{L}$ be a complete compact modular lattice. Then $\varphi(\mathbf{1})$ is compact for every linear morphism $\varphi:\mathcal{L}\to\mathcal{G}$.
\end{lemma}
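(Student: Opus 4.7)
The plan is to transfer compactness from $\mathcal{L}$ to $[\mathbf{0},\varphi(\mathbf{1})]$ using the induced isomorphism $\overline{\varphi}:[\ker_\varphi,\mathbf{1}]\to[\mathbf{0},\varphi(\mathbf{1})]$ coming from Definition \ref{linmor}. By Remark \ref{linmorjoins}, this is an isomorphism of complete lattices, so both $\overline{\varphi}$ and its inverse preserve arbitrary joins. This is the only nontrivial ingredient, and with it the argument essentially rewrites itself.

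First I would pick an arbitrary family $\{b_i\}_{i\in I}\subseteq[\mathbf{0},\varphi(\mathbf{1})]$ with $\varphi(\mathbf{1})=\bigvee_{i\in I}b_i$; the goal is to extract a finite subcover. Set $a_i=\overline{\varphi}^{-1}(b_i)\in[\ker_\varphi,\mathbf{1}]$. Since $\overline{\varphi}^{-1}$ commutes with arbitrary joins, the join $\bigvee_{i\in I} a_i$ taken in $[\ker_\varphi,\mathbf{1}]$ equals $\overline{\varphi}^{-1}(\varphi(\mathbf{1}))=\mathbf{1}$.

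Next I would observe that the join in the interval $[\ker_\varphi,\mathbf{1}]$ coincides with the join in $\mathcal{L}$: since each $a_i\geq\ker_\varphi$, the supremum of the $a_i$ is already $\geq\ker_\varphi$, so the ambient join already lies in the interval. Hence $\bigvee_{i\in I}a_i=\mathbf{1}$ in $\mathcal{L}$. Applying compactness of $\mathcal{L}$ yields a finite subset $F\subseteq I$ with $\bigvee_{i\in F}a_i=\mathbf{1}$.

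Finally I would push this back along $\overline{\varphi}$: since $\overline{\varphi}$ also preserves (finite) joins,
\[
\bigvee_{i\in F} b_i=\bigvee_{i\in F}\overline{\varphi}(a_i)=\overline{\varphi}\!\left(\bigvee_{i\in F}a_i\right)=\overline{\varphi}(\mathbf{1})=\varphi(\mathbf{1}),
\]
which shows $[\mathbf{0},\varphi(\mathbf{1})]$ is compact, i.e.\ $\varphi(\mathbf{1})$ is a compact element of $\mathcal{G}$. The only conceptual point to be careful about is the join-coincidence in step three, but modularity is not even needed for this; the proof uses only the definition of a linear morphism and the completeness of the involved lattices.
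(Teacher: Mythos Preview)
Your proof is correct and follows essentially the same route as the paper: both transport the covering family back through the complete-lattice isomorphism $\overline{\varphi}^{-1}$, invoke compactness of $\mathcal{L}$ to obtain a finite subfamily, and push it forward again. You are slightly more explicit about why joins in $[\ker_\varphi,\mathbf{1}]$ agree with joins in $\mathcal{L}$, which the paper leaves implicit.
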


\begin{proof}
	Suppose $\varphi(\mathbf{1})=\bigvee_{i\in I}a_i$. Consider the isomorphism $\overline{\varphi}:[\ker_\varphi,\mathbf{1}]\to [\mathbf{0},\varphi(\mathbf{1})]$. Then 
	\[\mathbf{1}={\overline{\varphi}}^{-1}\varphi(\mathbf{1})={\overline{\varphi}}^{-1}\left(\bigvee_{i\in I}a_i \right)=\bigvee_{i\in I}{\overline{\varphi}}^{-1}(a_i).\]
	By hypothesis, a finite subset $F\subseteq I$ exists, such that $\mathbf{1}=\bigvee_{i\in F}{\overline{\varphi}}^{-1}(a_i)$. Applying $\varphi$ we get that $\varphi(\mathbf{1})=\bigvee_{i\in F}a_i$. Thus, $\varphi(\mathbf{1})$ is compact.
\end{proof}

Given a complete modular lattice $\mathcal{L}$ and a submonoid $\mathfrak{m}\subseteq\End_{lin}(\mathcal{L})$, it is said that an element $a\in \mathcal{L}$ is $\mathfrak{m}$-$\mathcal{L}$-\emph{generated} if there exists a family of linear morphisms $\{\varphi_i:\mathcal{L}\to[\mathbf{0},a]\}_I$ such that $\iota_a\varphi_i\in\mathfrak{m}$ for all $i\in I$ and $\bigvee_I\varphi_i(\mathbf{1})=a$ \cite[Definition 3.48]{medina2022mbaer}.

\begin{prop}\label{vnl}
	Let $\mathcal{L}$ be a complete compact modular lattice and $\mathfrak{m}$ be a submonoid of $\End_{lin}(\mathcal{L})$ closed under complements. The following conditions are equivalent:
	\begin{enumerate}[label=\emph{(\alph*)}]
		\item $\mathcal{L}$ is von Neumann regular.
		\item $\mathcal{L}$ is dual-$\mathfrak{m}$-Rickart, and every compact element is $\mathfrak{m}$-$\mathcal{L}$-generated.
	\end{enumerate}
\end{prop}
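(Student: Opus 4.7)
The plan is to lean on two facts repeatedly: for any linear morphism $\varphi$ out of a compact lattice, $\varphi(\mathbf{1})$ is compact (Lemma \ref{imgcomp}); and, since $\mathfrak{m}$ is closed under complements, $\mathfrak{m}$ contains the projection $\pi_x$ for every complement $x\in\mathcal{L}$ (Remark \ref{cucproj}).

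For (a)$\Rightarrow$(b), given $\varphi\in\mathfrak{m}$, Lemma \ref{imgcomp} makes $\varphi(\mathbf{1})$ compact, hence by von Neumann regularity of $\mathcal{L}$ it has a complement; this gives dual-$\mathfrak{m}$-Rickart. For the second condition, let $c$ be a compact element; by hypothesis $c$ has a complement $c'$, so $\pi_c\in\mathfrak{m}$. The map $\varphi:\mathcal{L}\to[\mathbf{0},c]$ defined by $\varphi(y)=(y\vee c')\wedge c$ is a linear morphism with $\iota_c\varphi=\pi_c\in\mathfrak{m}$ and $\varphi(\mathbf{1})=c$, so the singleton family $\{\varphi\}$ witnesses that $c$ is $\mathfrak{m}$-$\mathcal{L}$-generated.

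For (b)$\Rightarrow$(a), let $c$ be compact. The $\mathfrak{m}$-$\mathcal{L}$-generated hypothesis supplies a family $\{\varphi_i:\mathcal{L}\to[\mathbf{0},c]\}_{i\in I}$ with $\iota_c\varphi_i\in\mathfrak{m}$ and $\bigvee_{i\in I}\varphi_i(\mathbf{1})=c$, and compactness of $c$ cuts this down to a finite $F\subseteq I$ with $c=\bigvee_{i\in F}\varphi_i(\mathbf{1})$. Writing $e_i:=\varphi_i(\mathbf{1})=(\iota_c\varphi_i)(\mathbf{1})$, each $e_i$ is a complement by dual-$\mathfrak{m}$-Rickart applied to $\iota_c\varphi_i\in\mathfrak{m}$.

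The main obstacle is then to upgrade this finite list of individual complements to the statement that their join $e_1\vee\cdots\vee e_n$ is again a complement, something that fails for joins of complements in an arbitrary modular lattice. My plan is an induction on $n=|F|$. Assuming $e_1\vee\cdots\vee e_{n-1}$ has a complement $d\in\mathcal{L}$, form $\pi_d\circ\iota_c\varphi_n\in\mathfrak{m}$ using Remark \ref{cucproj}; its image is $e':=(e_n\vee e_1\vee\cdots\vee e_{n-1})\wedge d$, which by dual-$\mathfrak{m}$-Rickart admits a complement $f\in\mathcal{L}$. Two modularity identities then close the argument: first, $e_1\vee\cdots\vee e_n=(e_1\vee\cdots\vee e_{n-1})\vee e'$, because $e'\leq d$ and $(e_1\vee\cdots\vee e_{n-1})\vee d=\mathbf{1}$; second, $(e_1\vee\cdots\vee e_n)\wedge d=e'$, using $e'\leq d$ and $(e_1\vee\cdots\vee e_{n-1})\wedge d=\mathbf{0}$. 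Combining these shows $(e_1\vee\cdots\vee e_n)\vee(f\wedge d)=\mathbf{1}$ and $(e_1\vee\cdots\vee e_n)\wedge(f\wedge d)=e'\wedge f=\mathbf{0}$, so $f\wedge d$ is a complement of $e_1\vee\cdots\vee e_n$, completing the induction.
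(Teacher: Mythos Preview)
Your proof is correct and follows the same outline as the paper's. The one difference is in (b)$\Rightarrow$(a): the paper simply invokes \cite[Proposition 3.24]{medina2022mbaer} to conclude that the finite join $\bigvee_{i\in F}\varphi_i(\mathbf{1})$ is a complement in a dual-$\mathfrak{m}$-Rickart lattice, whereas you supply a self-contained inductive argument (via $\pi_d\circ\iota_c\varphi_n$ and modularity) that is essentially the proof of that cited proposition. So what you flagged as ``the main obstacle'' is already packaged as a lemma in the prequel paper; your direct argument is valid and makes the proof independent of that reference.
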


\begin{proof}
	(a)$\Rightarrow$(b) Let $\varphi\in\mathfrak{m}$. It follows from Lemma \ref{imgcomp} that $\varphi(\mathbf{1})$ is compact. By hypothesis $\varphi(\mathbf{1})$ has a complement; that is, $\mathcal{L}$ is dual-$\mathfrak{m}$-Rickart. Now, let $c\in\mathcal{L}$ be a compact element. By hypothesis, $c$ has a complement, then $\pi_c(\mathbf{1})=c$, and $\mathfrak{m}$ contains all the projections. Thus, $c$ is $\mathfrak{m}$-$\mathcal{L}$-generated.
	
	(b)$\Rightarrow$(a) Let $c\in\mathcal{L}$ be a compact element. By hypothesis, there exists a family of linear morphisms $\{\varphi_i:\mathcal{L}\to[\mathbf{0},c]\}_I$ such that $\iota_c\varphi_i\in\mathfrak{m}$ for all $i\in I$ and $\bigvee_{i\in I}\varphi_i(\mathbf{1})=c$. Since $c$ is compact, a finite subset $F\subseteq I$ exists, such that $c=\bigvee_{i\in F}\varphi_i(\mathbf{1})$. It follows from \cite[Proposition 3.24]{medina2022mbaer} that $c$ is a complement because each $\varphi_i(\mathbf{1})$ is a complement in $\mathcal{L}$.
\end{proof}

\begin{cor}
	The following conditions are equivalent for a finitely generated module $M$:
	\begin{enumerate}[label=\emph{(\alph*)}]
		\item $M$ is regular (in the sense of \cite{tuganbaevrings}).
		\item $M$ is dual-Rickart and generates all its cyclic submodules.
		\item $M$ is dual-Rickart and generates all its finitely generated submodules.
	\end{enumerate}
\end{cor}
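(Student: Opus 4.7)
The plan is to apply Proposition \ref{vnl} to the lattice $\mathcal{L}=\Lambda(M)$ with the submonoid $\mathfrak{m}=\mathfrak{E}_M$, which is closed under complements by the earlier proposition, and then translate every lattice-theoretic hypothesis and conclusion into module-theoretic language. Since $M$ is finitely generated, the lattice $\Lambda(M)$ is compact by the remark preceding the proposition, and the compact elements of $\Lambda(M)$ are precisely the finitely generated submodules of $M$. By the remark after the definition of a von~Neumann regular lattice, condition (a) is nothing but ``$\Lambda(M)$ is a von~Neumann regular lattice''.

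Next I would unfold the definition of $\mathfrak{E}_M$-$\Lambda(M)$-generation for a submodule $N\le M$. A family of linear morphisms $\{\varphi_i\colon\Lambda(M)\to[\mathbf{0},N]\}_I$ with $\iota_N\varphi_i\in\mathfrak{E}_M$ and $\bigvee_i\varphi_i(M)=N$ corresponds, via $(-)_*\colon\End_R(M)\to\End_{lin}(\Lambda(M))$, to a family of endomorphisms $f_i\in\End_R(M)$ with $f_i(M)\subseteq N$ and $\sum_i f_i(M)=N$; equivalently, $N$ is an image of a direct sum of copies of $M$, i.e.\ $N$ is generated by $M$ in the usual module-theoretic sense. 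Likewise, $\Lambda(M)$ being dual-$\mathfrak{E}_M$-Rickart is precisely the statement that $f(M)$ is a direct summand of $M$ for every $f\in\End_R(M)$, i.e.\ $M$ is dual-Rickart.

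With these translations, Proposition \ref{vnl} gives (a)$\Leftrightarrow$(c) directly, once one observes that the compact elements of $\Lambda(M)$ are the finitely generated submodules. To finish, I would handle (b)$\Leftrightarrow$(c): the implication (c)$\Rightarrow$(b) is immediate because every cyclic submodule is finitely generated. For (b)$\Rightarrow$(c), given a finitely generated submodule $N=Rm_1+\cdots+Rm_k$, each cyclic summand $Rm_j$ is generated by $M$ by hypothesis, so concatenating finitely many generating families produces a family of homomorphisms from $M$ whose images sum to $N$, proving that $N$ too is generated by $M$.

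The only mildly delicate point is the dictionary between $\mathfrak{E}_M$-$\Lambda(M)$-generation and module generation, since the definition of $\mathfrak{m}$-$\mathcal{L}$-generation only requires the composition $\iota_N\varphi_i$ to lie in $\mathfrak{m}$ rather than $\varphi_i$ itself; I expect this to be the main (minor) obstacle, and it is resolved by noting that $\iota_N$ is just the inclusion, so the $\varphi_i$ are uniquely recovered as the corestrictions of the underlying endomorphisms $f_i$ to $N$, and conversely any module homomorphism $M\to N$ extends through $\iota_N$ to an element of $\End_R(M)$ whose induced linear morphism lies in $\mathfrak{E}_M$.
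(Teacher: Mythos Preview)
Your proposal is correct and follows exactly the approach the paper intends: the corollary is stated without proof immediately after Proposition~\ref{vnl}, and your argument---specializing that proposition to $\mathcal{L}=\Lambda(M)$ and $\mathfrak{m}=\mathfrak{E}_M$, translating the lattice notions into module language via the remarks in the paper, and then handling the trivial equivalence between generating cyclic and finitely generated submodules---is precisely the intended derivation. Your discussion of the dictionary between $\mathfrak{E}_M$-$\Lambda(M)$-generation and ordinary $M$-generation is accurate and, if anything, more careful than the paper itself, which leaves this translation implicit.
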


Using Theorem \ref{kerimgsumm}, we get the following corollaries.

\begin{cor}\label{vnlendo}
	Let $\mathcal{L}$ be a complete compact modular lattice and $\mathfrak{m}$ be a submonoid of $\End_{lin}(\mathcal{L})$ closed under complements. The following conditions are equivalent:
	\begin{enumerate}[label=\emph{(\alph*)}]
		\item $\mathcal{L}$ is von Neumann regular and satisfies the condition $\mathfrak{m}$-$D_2$.
		\item $\mathcal{L}$ is $\mathfrak{m}$-Rickart, dual-$\mathfrak{m}$-Rickart, and every compact element is $\mathfrak{m}$-$\mathcal{L}$-generated.
		\item $\mathcal{L}$ is $\mathfrak{m}$-endoregular, and every compact element is $\mathfrak{m}$-$\mathcal{L}$-generated.
	\end{enumerate}
\end{cor}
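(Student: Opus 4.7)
The plan is to stitch the statement together from two earlier results: Proposition \ref{vnl}, which characterizes von Neumann regularity of $\mathcal{L}$ as the conjunction of dual-$\mathfrak{m}$-Rickart plus ``every compact element is $\mathfrak{m}$-$\mathcal{L}$-generated,'' and Theorem \ref{kerimgsumm} (with its immediate corollary), which tells us that $\mathfrak{m}$-endoregularity is equivalent to being simultaneously $\mathfrak{m}$-Rickart and dual-$\mathfrak{m}$-Rickart, and is also equivalent to being dual-$\mathfrak{m}$-Rickart together with the $\mathfrak{m}$-$D_2$ condition. Nothing new seems to be needed; the hypotheses that $\mathfrak{m}$ is closed under complements and that $\mathcal{L}$ is compact modular are exactly what those two cited results require, so they transfer automatically.

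Concretely, I would prove a three-way cycle. For $(a)\Rightarrow(b)$: starting from $\mathcal{L}$ von Neumann regular, Proposition \ref{vnl} already yields dual-$\mathfrak{m}$-Rickartness together with $\mathfrak{m}$-$\mathcal{L}$-generation of all compact elements; combining dual-$\mathfrak{m}$-Rickartness with the hypothesized $\mathfrak{m}$-$D_2$ condition and invoking Theorem \ref{kerimgsumm} gives $\mathfrak{m}$-endoregularity, and then the corollary to Theorem \ref{kerimgsumm} produces $\mathfrak{m}$-Rickartness as well. For $(b)\Rightarrow(c)$: the corollary to Theorem \ref{kerimgsumm} immediately upgrades ``$\mathfrak{m}$-Rickart and dual-$\mathfrak{m}$-Rickart'' to $\mathfrak{m}$-endoregular, and the generation condition is just carried over unchanged. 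For $(c)\Rightarrow(a)$: an $\mathfrak{m}$-endoregular lattice is in particular dual-$\mathfrak{m}$-Rickart (again by the corollary to Theorem \ref{kerimgsumm}), so together with $\mathfrak{m}$-$\mathcal{L}$-generation of compacts, Proposition \ref{vnl} hands us von Neumann regularity; and by Theorem \ref{kerimgsumm}, $\mathfrak{m}$-endoregularity implies both the $\mathfrak{m}$-$C_2$ and $\mathfrak{m}$-$D_2$ conditions, giving us the missing piece of (a).

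There is essentially no technical obstacle here, since each implication is a direct citation. The only point that requires a brief sanity check is making sure that in the step $(a)\Rightarrow(b)$ I am entitled to apply Theorem \ref{kerimgsumm}: it requires $\mathfrak{m}$ closed under complements (hypothesis) and a dual-$\mathfrak{m}$-Rickart lattice with the $\mathfrak{m}$-$D_2$ property (both obtained in the preceding sentence), so this goes through. I would write the proof as three short paragraphs, one per implication, with explicit references to Proposition \ref{vnl}, Theorem \ref{kerimgsumm}, and its corollary, and no further computation.
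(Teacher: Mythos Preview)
Your proposal is correct and matches the paper's approach: the paper simply states that the corollary follows from Theorem \ref{kerimgsumm} (implicitly together with Proposition \ref{vnl}), and your three-way cycle spells out exactly that derivation.
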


\begin{cor}
	The following conditions are equivalent for a finitely generated module $M$:
	\begin{enumerate}[label=\emph{(\alph*)}]
		\item $M$ is regular (in the sense of \cite{tuganbaevrings}) and satisfies the $(D_2)$ condition.
		\item $M$ is Rickart, dual-Rickart, and generates all its cyclic submodules.
		\item $M$ is endoregular and generates all its cyclic submodules.
	\end{enumerate}
\end{cor}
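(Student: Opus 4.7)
The plan is to derive this corollary from Corollary \ref{vnlendo} by specializing to $\mathcal{L}=\Lambda(M)$ and $\mathfrak{m}=\mathfrak{E}_M$. Because $M$ is finitely generated, $\Lambda(M)$ is a compact complete modular lattice; by the earlier proposition, $\mathfrak{E}_M$ is closed under complements; the complements in $\Lambda(M)$ are precisely the direct summands of $M$, and the compact elements are precisely the finitely generated submodules. So the hypotheses of Corollary \ref{vnlendo} are satisfied, and it suffices to match each module-theoretic condition with its $\mathfrak{E}_M$-lattice counterpart.

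For the regularity, Rickart, dual-Rickart, and endoregular conditions, the dictionary is already in place: the remark preceding Lemma \ref{imgcomp} identifies Tuganbaev-regularity of $M$ with von Neumann regularity of $\Lambda(M)$, and the corollary following Definition \ref{defendoreg} together with the standard correspondence between $\End_R(M)$ and $\mathfrak{E}_M$ handle the remaining three. For the $(D_2)$ condition, I would check both directions: if $M$ has a module-theoretic quotient $M/N\cong X$ with $X$ a direct summand via $f\in\End_R(M)$, then $f_*=\iota_X\theta\rho_N$ for the induced lattice isomorphism $\theta:[N,M]\to[0,X]$, so the $\mathfrak{E}_M$-$D_2$ condition produces a complement of $N$; conversely, any $\theta$ appearing in the hypothesis of $\mathfrak{E}_M$-$D_2$ with $\iota_X\theta\rho_N=f_*\in\mathfrak{E}_M$ forces $\ker f=N$ and $f(M)=X$, so $f$ exhibits an honest module isomorphism $M/N\cong X$ and the module-theoretic $(D_2)$ applies.

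The remaining translation is between ``$M$ generates all its cyclic submodules'' and ``every compact element of $\Lambda(M)$ is $\mathfrak{E}_M$-$\Lambda(M)$-generated''. The reverse direction is trivial since cyclic submodules are compact. For the forward direction, a finitely generated submodule $N=Rm_1+\cdots+Rm_k$ is a finite join of cyclics; writing each $Rm_i$ as a sum of images of maps $M\to Rm_i$ and composing with the inclusions $Rm_i\hookrightarrow N$ produces a family of linear morphisms $\Lambda(M)\to[0,N]$ whose composites with $\iota_N$ lie in $\mathfrak{E}_M$ and whose images join to $N$. With all three pairs of conditions matched, Corollary \ref{vnlendo} yields the desired equivalence. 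I expect the main obstacle to be the bookkeeping in the $(D_2)$ translation, specifically verifying that an arbitrary $\theta$ fitting the hypothesis of $\mathfrak{E}_M$-$D_2$ really does come from a module epimorphism rather than from a mere lattice-level coincidence of intervals.
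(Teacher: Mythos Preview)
Your proposal is correct and matches the paper's approach: the paper states this result without proof as an immediate specialization of Corollary~\ref{vnlendo} to $\mathcal{L}=\Lambda(M)$ and $\mathfrak{m}=\mathfrak{E}_M$, and your dictionary is exactly the intended one. The care you take with the $(D_2)$ translation is more than the paper spells out, but your argument that $\iota_X\theta\rho_N=f_*$ forces $\ker f=N$ and $f(M)=X$ (so $f$ induces a genuine module isomorphism $M/N\cong X$) is sound, and the worry you flag at the end does not materialize.
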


Recall that a left ideal $A$ in a monoid $E$ is a subset such that $EA\subseteq A$. If the monoid $E$ is a monoid with zero, we ask that $0\in A$. Right, and two-sided ideals in a monoid are defined similarly.

\begin{lemma}\label{regsp}
	Let $E$ be a regular monoid with zero, and let $A$ be a left ideal of $\mathfrak{m}$. If $A^2=0$,  then $A=0$. 
\end{lemma}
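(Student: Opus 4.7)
The plan is to exploit regularity of $E$ to write any element $a\in A$ as a product of two elements of $A$, which must then vanish because $A^2=0$.

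More precisely, I would fix an arbitrary element $a\in A$ and invoke regularity of $E$ to obtain some $b\in E$ with $a=aba$. The key observation is that although $b$ itself need not lie in $A$, the element $ba$ does: since $A$ is a left ideal of $E$, we have $ba\in EA\subseteq A$. Therefore both $a$ and $ba$ lie in $A$, and their product $a\cdot(ba)=aba$ lies in $A^2$. By the hypothesis $A^2=0$, this forces $aba=0$, but $aba=a$ by the choice of $b$, so $a=0$. Since $a\in A$ was arbitrary, $A=0$.

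There is no serious obstacle: the entire argument is a one-line application of regularity combined with the left-ideal condition. The only subtlety worth flagging is that regularity produces a \emph{pseudoinverse} $b$ that is not assumed to be in $A$, so the left-ideal hypothesis (rather than a two-sided hypothesis) is exactly what is needed to bring $ba$ back into $A$ and close the argument.
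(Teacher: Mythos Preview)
Your proof is correct and follows exactly the same argument as the paper: both pick $a\in A$, use regularity to write $a=aba$, observe that $ba\in A$ by the left-ideal property, and conclude $a=a(ba)\in A^2=0$. (Note the statement has a typo: $A$ should be a left ideal of $E$, not of $\mathfrak{m}$; you correctly work with $E$.)
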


\begin{proof}
	Let $x\in A$. By hypothesis, there exists $y\in E$ such that $xyx=x$. Hence $x\in A^2=0$.
\end{proof}

\begin{lemma}\label{pife0central}
	Let $\mathcal{L}$ be an $\mathfrak{m}$-endoregular lattice and $\varepsilon^2=\varepsilon\in\mathfrak{m}$ be idempotent.  Then, $\varepsilon$ is central in $\mathfrak{m}$ if and only if $\pi_{\ker_\varepsilon}\varphi\varepsilon=0$ for all $\varphi\in\mathfrak{m}$. 
\end{lemma}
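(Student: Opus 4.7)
My plan is to pass to the decomposition $\varepsilon = \pi_a$, where $a = \varepsilon(\mathbf{1})$ and $a' = \ker_\varepsilon$ are complements (by Corollary \ref{idemespi} and Proposition \ref{idemcomp}), and then rephrase both implications as invariance properties of the pair $(a, a')$ under $\mathfrak{m}$. A preliminary fact I will use repeatedly is $\pi_{a'}\pi_a = 0$: evaluating at $\mathbf{1}$ gives $\pi_{a'}(a) = (a \vee a) \wedge a' = \mathbf{0}$. For the forward direction this is essentially all that is needed: if $\varepsilon$ is central, then $\pi_{a'}\varphi\varepsilon = \pi_{a'}\varepsilon\varphi = \pi_{a'}\pi_a\varphi = 0$ for every $\varphi \in \mathfrak{m}$.

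For the converse, assume $\pi_{a'}\varphi\varepsilon = 0$ for all $\varphi \in \mathfrak{m}$ and proceed in three steps. \emph{Step 1.} Translate the hypothesis into the invariance $\varphi(a) \leq a$: evaluating at $\mathbf{1}$ gives $(\varphi(a) \vee a) \wedge a' = \mathbf{0}$, and since $a \leq \varphi(a) \vee a$, the modular law collapses this to $\varphi(a) \leq a$. \emph{Step 2.} Derive the dual invariance $\varphi(a') \leq a'$. Set $\mu = \pi_a\varphi\pi_{a'} \in \mathfrak{m}$ (this uses $\pi_{a'} \in \mathfrak{m}$, which is automatic once $\mathfrak{m}$ is closed under complements, by Remark \ref{cucproj}) and pick $\psi \in \mathfrak{m}$ with $\mu\psi\mu = \mu$ by $\mathfrak{m}$-endoregularity. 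Observe that $\mu(\mathbf{1}) = \pi_a(\varphi(a')) \leq a$, that $\pi_{a'}$ annihilates $[\mathbf{0}, a]$ so $\mu$ does too, and that $\psi(a) \leq a$ by Step 1. Chaining, $\mu(\mathbf{1}) = \mu\bigl(\psi(\mu(\mathbf{1}))\bigr) = \mathbf{0}$, hence $\mu = 0$; reapplying the modular-law argument of Step 1 with $a$ and $a'$ swapped converts $\mu = 0$ into $\varphi(a') \leq a'$. \emph{Step 3.} Conclude $\pi_a\varphi = \varphi\pi_a$ from both invariances. For $x \in \mathcal{L}$, put $u = \pi_a(x)$; modularity gives $u \vee a' = x \vee a'$. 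Applying $\varphi$ yields $\varphi(u) \vee \varphi(a') = \varphi(x) \vee \varphi(a')$, and joining both sides with $a'$ (using $\varphi(a') \leq a'$) gives $\varphi(u) \vee a' = \varphi(x) \vee a'$. Meeting with $a$ and using $\varphi(u) \leq a$ with modularity simplifies the left side to $\varphi(u) = \varphi\pi_a(x)$, while the right side is $\pi_a\varphi(x)$.

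The main obstacle is Step 2. The one-sided hypothesis only forces $a$ to be $\mathfrak{m}$-invariant, and in general the complement of a fully invariant direct summand need not itself be fully invariant---$\mathfrak{m}$-endoregularity is precisely what rescues the argument. The mechanism is Peirce-like: $\mu = \pi_a\varphi\pi_{a'}$ behaves ``nilpotently'' with respect to the $\varepsilon$-splitting, in that its image lies in $[\mathbf{0}, a]$ while it simultaneously annihilates $[\mathbf{0}, a]$, and every element of $\mathfrak{m}$ preserves $[\mathbf{0}, a]$, so the regularity equation $\mu = \mu\psi\mu$ collapses to $\mu = 0$.
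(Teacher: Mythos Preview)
Your proof is correct and follows the same three-step skeleton as the paper's: (1) translate the hypothesis into $\varphi(\varepsilon(\mathbf{1}))\leq\varepsilon(\mathbf{1})$ (equivalently $\varepsilon\varphi\varepsilon=\varphi\varepsilon$); (2) use regularity of $\mathfrak{m}$ to obtain the dual invariance $\varphi(\ker_\varepsilon)\leq\ker_\varepsilon$ (equivalently $\varepsilon\varphi\pi_{\ker_\varepsilon}=0$); (3) combine the two to conclude $\varepsilon\varphi=\varphi\varepsilon$. The execution differs, however. For Step~2 the paper observes that $\varepsilon\mathfrak{m}\pi_{\ker_\varepsilon}$ is a left ideal of $\mathfrak{m}$ with square zero and invokes Lemma~\ref{regsp}, whereas you unpack that argument element-wise: with $\mu=\varepsilon\varphi\pi_{\ker_\varepsilon}$ and $\mu\psi\mu=\mu$, you trace $\mu(\mathbf{1})\leq\varepsilon(\mathbf{1})$, $\psi(\varepsilon(\mathbf{1}))\leq\varepsilon(\mathbf{1})$, and $\mu|_{[\mathbf{0},\varepsilon(\mathbf{1})]}=0$ to force $\mu=0$. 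For Step~3 the paper cites the inequality $a\leq\pi_{\ker_\varepsilon}(a)\vee\varepsilon(a)$ from \cite{medina2022mbaer}, while you run a clean modular computation directly. Your version is more self-contained; the paper's is slightly more abstract. Both routes rely on $\pi_{\ker_\varepsilon}\in\mathfrak{m}$, a point the lemma's statement does not make explicit but which you rightly flag (and which the paper also uses without comment).
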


\begin{proof}
	Set $k=\ker_\varepsilon$ and suppose $\pi_{k}\varphi\varepsilon=0$ for all $\varphi\in\mathfrak{m}$. We claim that $\varepsilon\varphi\pi_k=0$ for all $\varphi\in\mathfrak{m}$. Let $\varphi\in\mathfrak{m}$. By hypothesis, $\varphi\varepsilon(a)\leq\varepsilon(\mathbf{1})$ for all $a\in\mathcal{L}$. By Corollary \ref{idemespi}, $\varepsilon(\varphi\varepsilon(a))=\varphi\varepsilon(a)$ for all $a\in \mathcal{L}$. Hence $\varepsilon\varphi\varepsilon=\varphi\varepsilon$. It follows that $\mathfrak{m}\varepsilon\subseteq\varepsilon\mathfrak{m}$. Thus, $\mathfrak{m}(\varepsilon\mathfrak{m}\pi_k)\subseteq\varepsilon\mathfrak{m}\pi_k$. Also, $(\varepsilon\mathfrak{m}\pi_k)^2=0$. Since $\mathfrak{m}$ is a regular monoid, then $\varepsilon\mathfrak{m}\pi_k=0$ by Lemma \ref{regsp}, proving the claim.
	
	Let $a\in\mathcal{L}$ and $\varphi\in\mathfrak{m}$. By \cite[Proposition 2.14]{medina2022mbaer} and Corollary \ref{idemespi}, $a\leq\pi_k(a)\vee\varepsilon(a)$. Then $\varphi(a)\leq\varphi\pi_k(a)\vee\varphi\varepsilon(a)$. Thus $\varepsilon\varphi(a)\leq\varepsilon\varphi\pi_k(a)\vee\varepsilon\varphi\varepsilon(a)=\varepsilon\varphi\varepsilon(a)$. 
 On the other hand,   $(a\wedge k)\vee(a\wedge\varepsilon(\mathbf{1}))\leq a$ for all $a\in\mathcal{L}$. Thus,
	\[\varepsilon\varphi\varepsilon(a)=\varepsilon\varphi((a\vee k)\wedge k)\vee\varepsilon\varphi((a\vee k)\wedge\varepsilon(\mathbf{1}))\leq \varepsilon\varphi(a\vee k)=\varepsilon\varphi(a),\]
	for all $a\in\mathcal{L}$. Hence $\varepsilon\varphi=\varepsilon\varphi\varepsilon=\varphi\varepsilon$.
\end{proof}

\begin{defn}
    Let $\mathcal{L}$ be a bounded lattice and $\mathfrak{m}\subseteq\End_{lin}(\mathcal{L})$ be a submonoid. $\mathcal{L}$ is called $\mathfrak{m}$-\emph{abelian} if the idempotents in $\mathfrak{m}$ are central in $\mathfrak{m}$. If the monoid we are considering is $\End_{lin}(\mathcal{L})$ we will omit the $\mathfrak{m}$.
\end{defn}

\begin{prop}\label{ker+img}
	Let $\mathcal{L}$ be a complete modular lattice and $\mathfrak{m}$ be a submonoid of $\End_{lin}(\mathcal{L})$ closed under complements. The following conditions are equivalent:
	\begin{enumerate}[label=\emph{(\alph*)}]
		\item $\mathcal{L}$ is $\mathfrak{m}$-Rickart, dual-$\mathfrak{m}$-Rickart, and $\mathfrak{m}$-abelian.
		\item $\mathbf{0}=\varphi(\mathbf{1})\wedge\ker_\varphi$ and $\mathbf{1}=\ker_\varphi\vee\varphi(\mathbf{1})$ for all $\varphi\in\mathfrak{m}$.
	\end{enumerate}
\end{prop}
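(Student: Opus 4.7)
My plan is to use Theorem \ref{kerimgsumm} as a pivot, since it identifies condition (a) (minus abelianness) with $\mathfrak{m}$-endoregularity plus the property that $\ker_\varphi$ and $\varphi(\mathbf{1})$ are individually complements. The goal in both directions is then to leverage the idempotents $\varepsilon=\psi\varphi$ and $\eta=\varphi\psi$ produced by the regularity relation $\varphi\psi\varphi=\varphi$, together with Proposition \ref{idemcomp} and Lemma \ref{pife0central}.

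For (a)$\Rightarrow$(b), I would first invoke Theorem \ref{kerimgsumm} to get that $\mathcal{L}$ is $\mathfrak{m}$-endoregular; then for $\varphi\in\mathfrak{m}$ pick $\psi\in\mathfrak{m}$ with $\varphi\psi\varphi=\varphi$, and set $\varepsilon=\psi\varphi$, $\eta=\varphi\psi$, both idempotent in $\mathfrak{m}$ and, by $\mathfrak{m}$-abelianness, central. I would then aim to show $\ker_\varphi=\ker_\varepsilon$ and $\varphi(\mathbf{1})=\varepsilon(\mathbf{1})$, at which point Proposition \ref{idemcomp} applied to $\varepsilon$ gives exactly (b). The kernel equality is routine: $\ker_\varphi\leq\ker_\varepsilon$ is immediate, and from $\varphi\varepsilon=\varphi\psi\varphi=\varphi$ one gets that $\varepsilon(a)=\mathbf{0}$ forces $\varphi(a)=\varphi(\varepsilon(a))=\mathbf{0}$. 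For the image, centrality of $\varepsilon$ combined with $\varphi\varepsilon=\varphi$ yields $\varepsilon\varphi=\varphi$, hence $\varphi(\mathbf{1})=\varepsilon(\varphi(\mathbf{1}))\leq\varepsilon(\mathbf{1})=\psi\varphi(\mathbf{1})$; for the reverse inequality, observe first that $\eta(\mathbf{1})=\varphi\psi(\mathbf{1})\leq\varphi(\mathbf{1})=\varphi\psi\varphi(\mathbf{1})\leq\varphi\psi(\mathbf{1})$, so $\eta(\mathbf{1})=\varphi(\mathbf{1})$, and then centrality of $\eta$ gives $\psi\eta(\mathbf{1})=\eta\psi(\mathbf{1})\leq\eta(\mathbf{1})=\varphi(\mathbf{1})$, that is, $\varepsilon(\mathbf{1})=\psi\varphi(\mathbf{1})\leq\varphi(\mathbf{1})$.

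For (b)$\Rightarrow$(a), note that (b) immediately says every $\ker_\varphi$ and every $\varphi(\mathbf{1})$ is a complement (each being the complement of the other), so $\mathcal{L}$ is both $\mathfrak{m}$-Rickart and dual-$\mathfrak{m}$-Rickart, and by Theorem \ref{kerimgsumm} is $\mathfrak{m}$-endoregular, which makes Lemma \ref{pife0central} available. It remains to prove $\mathfrak{m}$-abelian. Fix $\varepsilon^2=\varepsilon\in\mathfrak{m}$ and $\varphi\in\mathfrak{m}$; by Lemma \ref{pife0central} it suffices to show $\theta:=\pi_{\ker_\varepsilon}\varphi\varepsilon=\mathbf{0}$. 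Closure of $\mathfrak{m}$ under complements guarantees $\pi_{\ker_\varepsilon}\in\mathfrak{m}$ via Remark \ref{cucproj}, so $\theta\in\mathfrak{m}$. A direct computation gives $\varepsilon\pi_{\ker_\varepsilon}=\mathbf{0}$ (the image of $\pi_{\ker_\varepsilon}$ is contained in $[\mathbf{0},\ker_\varepsilon]$, which $\varepsilon$ sends to $\mathbf{0}$), and therefore $\theta^2=\pi_{\ker_\varepsilon}\varphi(\varepsilon\pi_{\ker_\varepsilon})\varphi\varepsilon=\mathbf{0}$. Consequently $\theta(\theta(\mathbf{1}))=\mathbf{0}$, i.e.\ $\theta(\mathbf{1})\leq\ker_\theta$, while (b) applied to $\theta$ yields $\theta(\mathbf{1})\wedge\ker_\theta=\mathbf{0}$; hence $\theta(\mathbf{1})=\mathbf{0}$ and $\theta=\mathbf{0}$, as needed.

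The main subtle point is the identification $\varphi(\mathbf{1})=\varepsilon(\mathbf{1})$ in the forward direction: centrality of $\varepsilon$ alone pins down one inequality, while the opposite inequality uses centrality of $\eta$ through the ``invariance'' identity $\phi(\eta(\mathbf{1}))=\eta(\phi(\mathbf{1}))\leq\eta(\mathbf{1})$ applied to $\phi=\psi$; it is precisely the interplay between the two central idempotents that closes the loop. In the reverse direction the delicate step is recognizing that one does not need to force $\theta\mathfrak{m}\theta=\mathbf{0}$ and invoke Lemma \ref{regsp}: the weaker relation $\theta^2=\mathbf{0}$ is enough once hypothesis (b) is applied to $\theta$ itself.
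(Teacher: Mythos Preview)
Your proposal is correct. The reverse implication (b)$\Rightarrow$(a) matches the paper's argument almost verbatim: both set $\theta=\pi_{\ker_\varepsilon}\varphi\varepsilon$, observe $\theta^2=0$, feed $\theta$ back into hypothesis (b) to force $\theta(\mathbf{1})=\mathbf{0}$, and finish with Lemma~\ref{pife0central}.

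In the forward direction your organization differs from the paper's. You reduce (b) to the single identification $\ker_\varphi=\ker_\varepsilon$ and $\varphi(\mathbf{1})=\varepsilon(\mathbf{1})$ for $\varepsilon=\psi\varphi$, and then quote Proposition~\ref{idemcomp} once; the key step is your use of centrality of \emph{both} $\varepsilon$ and $\eta=\varphi\psi$ to pin down the two inequalities for the image. The paper instead treats the meet and the join separately: for $\varphi(\mathbf{1})\wedge\ker_\varphi=\mathbf{0}$ it uses centrality of the projection $\pi_{\ker_\varphi}$ (an idempotent in $\mathfrak{m}$ by Remark~\ref{cucproj}) to get $\pi_{\ker_\varphi}\varphi(\mathbf{1})=\varphi\pi_{\ker_\varphi}(\mathbf{1})=\varphi(\ker_\varphi)=\mathbf{0}$; for $\ker_\varphi\vee\varphi(\mathbf{1})=\mathbf{1}$ it first uses centrality of $\eta$ to obtain $\varphi(\mathbf{1})=\varphi(\varphi\psi(\mathbf{1}))$, hence $\mathbf{1}=\varphi\psi(\mathbf{1})\vee\ker_\varphi$ via $\overline{\varphi}$, and then shows $\varphi\psi(\mathbf{1})=\varphi(\mathbf{1})$ by a direct computation that does not require abelianness. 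Your route is tidier in that it funnels everything through a single idempotent and one appeal to Proposition~\ref{idemcomp}; the paper's route has the minor advantage that the meet condition is obtained without ever invoking the regularity witness $\psi$.
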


\begin{proof}
	(a)$\Rightarrow$(b) Let $\varphi\in\mathfrak{m}$. Let $k'\in\mathcal{L}$ be a complement of $k=\ker_\varphi$. Then $\mathbf{0}=\varphi(k)=\varphi\pi_k(\mathbf{1})=\pi_k\varphi(\mathbf{1})$. This implies that $\varphi(\mathbf{1})\leq k'$. Therefore $\varphi(\mathbf{1})\wedge k\leq k'\wedge k=\mathbf{0}$. Now, from Theorem \ref{kerimgsumm} there exists $\psi\in\mathfrak{m}$ such that $\varphi\psi\varphi=\varphi$. Since $\varphi(\mathbf{1})=(\varphi\psi)\varphi(\mathbf{1})=\varphi\varphi\psi(\mathbf{1})$, $\mathbf{1}=\varphi\psi(\mathbf{1})\vee k$. On the other hand, let $x\in\mathcal{L}$ be a complement of $\varphi(\mathbf{1})$. Then,
	\[\varphi\psi(\mathbf{1})=\varphi\psi(\varphi(\mathbf{1})\vee x)=\varphi\psi\varphi(\mathbf{1})\vee\varphi\psi(x)=\varphi(\mathbf{1})\vee\varphi\psi(x)=\varphi(\mathbf{1}\vee\psi(x))=\varphi(\mathbf{1}).\]
	Thus, $\mathbf{1}=\varphi(\mathbf{1})\vee k$.
	
	(b)$\Rightarrow$(a) By Theorem \ref{kerimgsumm}, we have that $\mathcal{L}$ is $\mathfrak{m}$-Rickart and dual-$\mathfrak{m}$-Rickart. Let $\varepsilon,\varphi\in\mathfrak{m}$ with $\varepsilon^2=\varepsilon$. Let $k=\ker_\varepsilon$
	and consider $\alpha=\pi_k\varphi\varepsilon\in\mathfrak{m}$. Note that $\alpha^2=0$ because $\varepsilon\pi_k=0$ and by hypothesis $\mathbf{1}=\alpha(\mathbf{1})\vee\ker_\alpha$. Hence $\alpha(\mathbf{1})=\mathbf{0}$, that is, $\mathbf{0}=\alpha(\mathbf{1})=\pi_k(\varphi\varepsilon(\mathbf{1}))$. Thus $\varphi\varepsilon=\varepsilon\varphi$ by Lemma \ref{pife0central}.
\end{proof}

\begin{cor}\label{compabend}
	Let $\mathcal{L}$ be a complete modular lattice, $a\in\mathcal{L}$ be a complement, $\mathfrak{m}$ be a submonoid of $\End_{lin}(\mathcal{L})$, and let $\mathfrak{n}$ be a submonoid of $\End_{lin}([\mathbf{0},a])$ such that $\iota_a\psi\pi_a\in\mathfrak{m}$ for every element $\psi\in\mathfrak{n}$. Suppose that the monoids $\mathfrak{m}$ and $\mathfrak{n}$ are closed under complements. If $\mathcal{L}$ is  $\mathfrak{m}$-abelian-endoregular, then $[\mathbf{0},a]$ is an  $\mathfrak{n}$-abelian-endoregular lattice.
\end{cor}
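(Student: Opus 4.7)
The plan is to reduce everything to the characterization in Proposition \ref{ker+img}, which says (under the closed-under-complements hypothesis) that abelian-endoregularity is equivalent to the condition $\mathbf{0}=\varphi(\mathbf{1})\wedge\ker_\varphi$ and $\mathbf{1}=\varphi(\mathbf{1})\vee\ker_\varphi$ for every $\varphi$ in the monoid. Since both $\mathfrak{m}$ and $\mathfrak{n}$ are closed under complements, my goal becomes: for every $\psi\in\mathfrak{n}$, show that $\psi(a)\wedge\ker_\psi=\mathbf{0}$ and $\psi(a)\vee\ker_\psi=a$ inside $[\mathbf{0},a]$.

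Given $\psi\in\mathfrak{n}$, I would lift it to $\mathcal{L}$ via the hypothesis by setting $\varphi=\iota_a\psi\pi_a\in\mathfrak{m}$, where $a'$ is a fixed complement of $a$ in $\mathcal{L}$. The key computation is to identify the kernel and image of $\varphi$ as elements of $\mathcal{L}$: clearly $\varphi(\mathbf{1})=\psi(a)$, and using that $\ker_{\pi_a}=a'$ with $\overline{\pi_a}:[a',\mathbf{1}]\to[\mathbf{0},a]$ given by $x\mapsto x\wedge a$ and inverse $y\mapsto y\vee a'$, a short check gives $\ker_\varphi=\ker_\psi\vee a'$.

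Now apply Proposition \ref{ker+img} to $\mathcal{L}$ and $\mathfrak{m}$ (using that $\mathcal{L}$ is $\mathfrak{m}$-abelian-endoregular), obtaining
\[
\psi(a)\wedge(\ker_\psi\vee a')=\mathbf{0} \quad\text{and}\quad \psi(a)\vee\ker_\psi\vee a'=\mathbf{1}.
\]
Two modular-law manipulations translate these to $[\mathbf{0},a]$: since $\ker_\psi\leq a$, modularity gives $a\wedge(\ker_\psi\vee a')=\ker_\psi\vee(a\wedge a')=\ker_\psi$, so meeting the first equation with $a$ (which fixes $\psi(a)$) produces $\psi(a)\wedge\ker_\psi=\mathbf{0}$. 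Similarly, meeting the second equation with $a$ and using $\psi(a)\vee\ker_\psi\leq a$ yields $\psi(a)\vee\ker_\psi=a$. Having verified both equalities for an arbitrary $\psi\in\mathfrak{n}$, Proposition \ref{ker+img} applied in the interval lattice $[\mathbf{0},a]$ with the monoid $\mathfrak{n}$ gives that $[\mathbf{0},a]$ is $\mathfrak{n}$-Rickart, dual-$\mathfrak{n}$-Rickart and $\mathfrak{n}$-abelian; Theorem \ref{kerimgsumm} then upgrades the first two properties to $\mathfrak{n}$-endoregularity, so $[\mathbf{0},a]$ is $\mathfrak{n}$-abelian-endoregular.

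The main obstacle is really the bookkeeping in identifying $\ker_\varphi$: one must be careful that the composition $\iota_a\psi\pi_a$ is interpreted as a linear endomorphism of $\mathcal{L}$ (in the sense of Remark \ref{exmorf}), and that the kernel of a composition of linear morphisms is computed by pulling back through the isomorphism $\overline{\pi_a}$. Everything else is two applications of Proposition \ref{ker+img} bridged by the modular law in $[\mathbf{0},a]$.
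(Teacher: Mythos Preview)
Your proof is correct and follows essentially the same route as the paper: lift $\psi$ to $\varphi=\iota_a\psi\pi_a\in\mathfrak{m}$, identify $\varphi(\mathbf{1})=\psi(a)$ and $\ker_\varphi=\ker_\psi\vee a'$, apply Proposition~\ref{ker+img} in $\mathcal{L}$, and use modularity to push the two equalities down to $[\mathbf{0},a]$ before invoking Proposition~\ref{ker+img} again. The only cosmetic differences are that the paper gets $\ker_\psi\wedge\psi(a)=\mathbf{0}$ directly from $\ker_\psi\leq\ker_\varphi$ rather than via the modular identity, and it leaves the final passage from ``Rickart $+$ dual-Rickart $+$ abelian'' to ``abelian-endoregular'' implicit where you cite Theorem~\ref{kerimgsumm}.
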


\begin{proof}
	Let $\psi\in\mathfrak{n}$. We have that $\varphi=\iota_a\psi\pi_a\in\mathfrak{m}$. Then $\mathbf{0}=\varphi(\mathbf{1})\wedge\ker_\varphi$ and $\mathbf{1}=\ker_\varphi\vee\varphi(\mathbf{1})$ by Proposition \ref{ker+img}. Note that $\ker_{\varphi}=\ker_{\psi}\vee x$ where $x$ is a complement of $a$ in $\mathcal{L}$ and $\varphi(\mathbf{1})=\psi(a)$. Hence,
	\[a=a\wedge(\ker_\varphi\vee\varphi(\mathbf{1}))=a\wedge(\ker_{\psi}\vee x\vee \psi(a))=(\ker_{\psi}\vee\psi(a))\vee(a\wedge x)=\ker_{\psi}\vee\psi(a).\]
	On the other hand, $\ker_{\psi}\wedge\psi(a)\leq \ker_{\varphi}\wedge\varphi(\mathbf{1})=\mathbf{0}$. Thus $[\mathbf{0},a]$ is an  $\mathfrak{n}$-abelian-endoregular lattice by Proposition \ref{ker+img}.
\end{proof}

\begin{prop}\label{abendofi}
	Let $\mathcal{L}$ be a complete modular lattice and $\mathfrak{m}$ be a submonoid of $\End_{lin}(\mathcal{L})$ closed under complements. The following conditions are equivalent:
	\begin{enumerate}[label=\emph{(\alph*)}]
		\item $\mathcal{L}$ is $\mathfrak{m}$-abelian-endoregular.
		\item $\mathcal{L}$ is $\mathfrak{m}$-endoregular, and $\varphi(a)\leq a$ for every $\mathfrak{m}$-$\mathcal{L}$-generated element $a\in \mathcal{L}$ and for all $\varphi\in \mathfrak{m}$.
	\end{enumerate}
\end{prop}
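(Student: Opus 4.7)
The plan is to prove the two directions by leveraging Proposition \ref{ker+img} (which translates $\mathfrak{m}$-abelian-endoregular into $\ker_\varphi$ and $\varphi(\mathbf{1})$ being complementary), Lemma \ref{pife0central} (the central-idempotent criterion $\pi_{\ker_\varepsilon}\varphi\varepsilon=0$), and the observation coming from Remark \ref{cucproj} that $\mathfrak{m}$ contains every projection.

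For (a)$\Rightarrow$(b), I would fix an $\mathfrak{m}$-$\mathcal{L}$-generated element $a$, with witnessing family $\{\varphi_i:\mathcal{L}\to[\mathbf{0},a]\}_I$ and $\alpha_i=\iota_a\varphi_i\in\mathfrak{m}$. Since $\mathcal{L}$ is $\mathfrak{m}$-endoregular, Proposition \ref{reg} (or Theorem \ref{kerimgsumm}) yields that each $\alpha_i(\mathbf{1})$ is a complement, so $\varepsilon_i:=\pi_{\alpha_i(\mathbf{1})}\in\mathfrak{m}$ is an idempotent with $\varepsilon_i(\mathbf{1})=\alpha_i(\mathbf{1})$. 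The $\mathfrak{m}$-abelian hypothesis gives $\varphi\varepsilon_i=\varepsilon_i\varphi$ for every $\varphi\in\mathfrak{m}$, hence
\[\varphi(\alpha_i(\mathbf{1}))=\varphi\varepsilon_i(\mathbf{1})=\varepsilon_i\varphi(\mathbf{1})\leq\varepsilon_i(\mathbf{1})=\alpha_i(\mathbf{1}).\]
Taking joins and using that linear morphisms preserve arbitrary joins (Remark \ref{linmorjoins}), I get $\varphi(a)=\bigvee_i\varphi(\alpha_i(\mathbf{1}))\leq\bigvee_i\alpha_i(\mathbf{1})=a$.

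For (b)$\Rightarrow$(a), I need to show every idempotent $\varepsilon\in\mathfrak{m}$ is central. The key observation is that $\varepsilon(\mathbf{1})$ is itself $\mathfrak{m}$-$\mathcal{L}$-generated: the co-restriction $\varepsilon':\mathcal{L}\to[\mathbf{0},\varepsilon(\mathbf{1})]$ is a linear morphism with $\iota_{\varepsilon(\mathbf{1})}\varepsilon'=\varepsilon\in\mathfrak{m}$ and $\varepsilon'(\mathbf{1})=\varepsilon(\mathbf{1})$, so the singleton family $\{\varepsilon'\}$ witnesses the generation. By hypothesis $\varphi(\varepsilon(\mathbf{1}))\leq\varepsilon(\mathbf{1})$ for every $\varphi\in\mathfrak{m}$. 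Because Proposition \ref{idemcomp} gives $\varepsilon(\mathbf{1})\wedge\ker_\varepsilon=\mathbf{0}$ and $\varepsilon(\mathbf{1})\vee\ker_\varepsilon=\mathbf{1}$, the projection $\pi_{\ker_\varepsilon}(y)=(y\vee\varepsilon(\mathbf{1}))\wedge\ker_\varepsilon$ kills everything in $[\mathbf{0},\varepsilon(\mathbf{1})]$. Therefore $(\pi_{\ker_\varepsilon}\varphi\varepsilon)(\mathbf{1})=\pi_{\ker_\varepsilon}(\varphi(\varepsilon(\mathbf{1})))=\mathbf{0}$, and a linear morphism sending $\mathbf{1}$ to $\mathbf{0}$ is the zero morphism. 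Lemma \ref{pife0central} then concludes that $\varepsilon$ is central in $\mathfrak{m}$.

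The only delicate point I anticipate is the proper handling of the co-restriction $\varepsilon'$ and verifying that it counts as an admissible witness in the definition of $\mathfrak{m}$-$\mathcal{L}$-generated (which only requires $\iota_a\varphi_i\in\mathfrak{m}$, not $\varphi_i\in\mathfrak{m}$); once this is clear, the rest is a straightforward modular-lattice computation combined with the already-established lemmas.
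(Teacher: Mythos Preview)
Your proof is correct and essentially the same as the paper's. For (b)$\Rightarrow$(a) the two arguments are identical; for (a)$\Rightarrow$(b) the only difference is how the needed central idempotent with image $\alpha_i(\mathbf{1})$ is produced: the paper takes a von Neumann inverse $\varphi'$ and uses the idempotent $\alpha_i\varphi'$, while you invoke Remark~\ref{cucproj} to take the projection $\pi_{\alpha_i(\mathbf{1})}$ directly---but by Corollary~\ref{idemespi} and the proof of Proposition~\ref{reg} these idempotents have the same image, so this is a cosmetic variation.
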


\begin{proof}
	(a)$\Rightarrow$(b) Let $a\in\mathcal{L}$ be $\mathfrak{m}$-$\mathcal{L}$-generated, that is, there exists a family of linear morphisms $\{\varphi_i:\mathcal{L}\to[\mathbf{0},a]\}_I$ such that $\iota_a\varphi_i\in\mathfrak{m}$ for all $i\in I$ and $\bigvee_{i\in I}\varphi_i(\mathbf{1})=a$. Let $\psi\in\mathfrak{m}$. Then $\psi(a)=\psi\left(\bigvee_{i\in I}\varphi_i(\mathbf{1})\right)=\bigvee_{i\in I}\psi\varphi_i(\mathbf{1})$. Fix $i\in I$. Then, there exists $\varphi'\in\mathfrak{m}$ such that $\varphi_i\varphi'\varphi_i=\varphi_i$. By hypothesis, 
	\[\psi\varphi_i(\mathbf{1})=\psi\varphi_i\varphi'\varphi_i(\mathbf{1})=\varphi_i\varphi'\psi\varphi_i(\mathbf{1})\leq\varphi_i(\mathbf{1})\leq a.\]
	Thus, $\psi\varphi_i(\mathbf{1})\leq a$ for all $i\in I$. This implies that $\psi(a)=\bigvee_{i\in I}\psi\varphi_i(\mathbf{1})\leq a$.
	
	(b)$\Rightarrow$(a) Let $\varepsilon^2=\varepsilon\in\mathfrak{m}$. Then $\varepsilon(\mathbf{1})$ is $\mathfrak{m}$-$\mathcal{L}$-generated. Let $\varphi\in\mathfrak{m}$, since $\varphi\varepsilon(1)\leq\varepsilon(\mathbf{1})$, we have that $\varepsilon\varphi\varepsilon(a)=\varphi\varepsilon(a)$ for all $a\in\mathcal{L}$. Hence $\pi_{\ker_\varepsilon}\varphi\varepsilon=0$. By Lemma \ref{pife0central}, $\varepsilon$ is central in $\mathfrak{m}$.
\end{proof}

Given a von Neumann regular ring $R$, it is known that $R$ is abelian (i.e. the idempotents are central) if and only if the lattice of left direct summands of $R$ is Boolean \cite[Theorem 3.4]{goodearlneumann} and \cite[Proposition 3.3]{cualuguareanu2010modules} (Also this result is still true for modules, see  \cite[Proposition 2.11]{medina2020abelian}). In the case of lattices, this result is no longer true in general, as the following example shows.

\begin{example}\label{idemnocentral}
	Let $\mathcal{L}$ be the lattice given by the following diagram:
	\[\xymatrix{ & \mathbf{1}\ar@{-}[dl]\ar@{-}[dr] & \\ a\ar@{-}[dr] & & b\ar@{-}[dl] \\ & \mathbf{0}. & }\]
	Then $\mathcal{L}$ is a Boolean algebra, in particular,  $C(\mathcal{L})=\mathcal{L}$.
	Consider the  following linear endomorphisms of $\mathcal{L}$:
	\[\begin{array}{c c c c c}
	\tau(\mathbf{0})=\mathbf{0} & &  \varphi(\mathbf{0})=\mathbf{0} & &  \psi(\mathbf{0})=\mathbf{0} \\
	\tau(a)=b & & \varphi(a)=a & & \psi(a)=\mathbf{0} \\
	\tau(b)=a & & \varphi(b)=\mathbf{0} & & \psi(b) =b \\
	\tau(\mathbf{1})=\mathbf{1} & & \varphi(\mathbf{1})=a & & \psi(\mathbf{1})=b.
	\end{array}\]
	Then $\End_{lin}(\mathcal{L})=\{Id,0,\tau,\varphi,\psi,\tau\varphi,\varphi\tau\}$. Note that $\varphi^2=\varphi$, $\psi^2=\psi$ and $\psi\varphi=\varphi\psi$. But $\tau\varphi\neq\varphi\tau$. Thus, $\mathcal{L}$ is an endoregular lattice which is not abelian. If we take $\mathfrak{m}=\{Id,0,\varphi,\psi\}$, then $\mathcal{L}$ is $\mathfrak{m}$-abelian-endoregular.
\end{example}

\begin{prop}\label{Cboolidemcomm}
	Let $\mathcal{L}$ be a complete modular lattice and $\mathfrak{m}$ be a submonoid of $\End_{lin}(\mathcal{L})$ closed under complements. Suppose $\mathcal{L}$ is $\mathfrak{m}$-endoregular. The following conditions are equivalent:
	\begin{enumerate}[label=\emph{(\alph*)}]
		\item Any two idempotents of $\mathfrak{m}$ commute.
		\item $C(\mathcal{L})$ is Boolean.
	\end{enumerate}
\end{prop}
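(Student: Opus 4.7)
The strategy rests on the bijection between idempotents of $\mathfrak{m}$ and complements of $\mathcal{L}$ (Corollary \ref{idemespi}), together with the fact that the closure-under-complements hypothesis ensures $\pi_x\in\mathfrak{m}$ for every $x\in C(\mathcal{L})$ (Remark \ref{cucproj}); condition~(a) is therefore equivalent to the statement that all such projections commute pairwise. A fact I will use in both directions is that, under either (a) or (b), complements in $\mathcal{L}$ are \emph{unique}.

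For (a)$\Rightarrow$(b) I proceed in four stages. First I establish uniqueness of complements: if $x\in C(\mathcal{L})$ has complements $x_1$ and $x_2$, and $\varepsilon,\delta\in\mathfrak{m}$ are the corresponding idempotents (images $x$, kernels $x_1,x_2$), then $\varepsilon\delta$ is idempotent by commutation, has image $\varepsilon(x)=x$, and satisfies $\varepsilon\delta(x_2)=\varepsilon(\mathbf{0})=\mathbf{0}$, so $x_2\leq\ker(\varepsilon\delta)$; a standard modular argument (two complements of $x$, one below the other, coincide) forces $\ker(\varepsilon\delta)=x_2$, hence by Corollary \ref{idemespi} $\varepsilon\delta=\delta$. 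Symmetrically $\delta\varepsilon=\varepsilon$, and commutation collapses these to $\varepsilon=\delta$, whence $x_1=x_2$. Next, closure under meets: for $x,y\in C(\mathcal{L})$, $\pi_x\pi_y$ is idempotent with image $\pi_x(y)=\pi_y(x)\leq x\wedge y$ (by commutation), while $x\wedge y\leq(y\vee x')\wedge x=\pi_x(y)$ is immediate, so $x\wedge y\in C(\mathcal{L})$. For closure under joins, apply the meets step to $x',y'$: then $x'\wedge y'\in C(\mathcal{L})$ has a unique complement $k$, and since $\pi_{x'}\pi_{y'}$ is an idempotent with image $x'\wedge y'$, Corollary \ref{idemespi} and uniqueness give $\pi_{x'}\pi_{y'}=\pi_{x'\wedge y'}$, placing its kernel at $k$. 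Using $\pi_{y'}(y)=\mathbf{0}$, $\pi_{y'}(x)=x\wedge y'$ (meets step), and $x\wedge y'\leq x=\ker\pi_{x'}$, one computes $\pi_{x'}\pi_{y'}(x\vee y)=\mathbf{0}$, so $x\vee y\leq k$; conversely, if $a\leq k$ then $\pi_{y'}(a)\leq\ker\pi_{x'}=x$ and $\pi_{y'}(a)\leq y'$ give $\pi_{y'}(a)\leq x\wedge y'$, and \cite[Proposition 2.14]{medina2022mbaer} yields $a\leq\pi_y(a)\vee\pi_{y'}(a)\leq y\vee x$, so $k=x\vee y\in C(\mathcal{L})$. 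Finally, for distributivity, given $x,y,z\in C(\mathcal{L})$ I have $y\vee z\in C(\mathcal{L})$, and by the meets step $\pi_x(t)=x\wedge t$ for every $t\in C(\mathcal{L})$; since $\pi_x$ preserves joins (Remark \ref{linmorjoins}), $x\wedge(y\vee z)=\pi_x(y)\vee\pi_x(z)=(x\wedge y)\vee(x\wedge z)$. Thus $C(\mathcal{L})$ is a complemented distributive sublattice of $\mathcal{L}$, i.e.\ Boolean.

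For (b)$\Rightarrow$(a), $C(\mathcal{L})$ Boolean already forces uniqueness of complements in $\mathcal{L}$ (every complement lies in $C(\mathcal{L})$, where complements are unique). Given $\varphi\in\mathfrak{m}$, Theorem \ref{kerimgsumm} and Proposition \ref{reg} yield $\psi\in\mathfrak{m}$ with $\varphi\psi\varphi=\varphi$; moreover $\psi\varphi$ is idempotent, $\psi\varphi(\mathbf{1})$ is a complement of $\ker\varphi$, and $\ker_{\psi\varphi}$ is a complement of $\varphi(\mathbf{1})$. By Corollary \ref{idemespi}, $\psi\varphi$ is the projection onto $\psi\varphi(\mathbf{1})$ along $\ker_{\psi\varphi}$, so $\ker_{\psi\varphi}$ is also a complement of $\psi\varphi(\mathbf{1})$; since $\ker\varphi$ is another such complement, uniqueness forces $\ker_{\psi\varphi}=\ker\varphi$, whence $\ker\varphi$ is a complement of $\varphi(\mathbf{1})$. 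Proposition \ref{ker+img} then promotes $\mathfrak{m}$-endoregularity to $\mathfrak{m}$-abelianness, giving in particular that any two idempotents commute.

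The main obstacle I foresee is closure under joins in (a)$\Rightarrow$(b): identifying the unique complement of $x'\wedge y'$ with the bare join $x\vee y$ is not a purely formal manipulation of idempotents, and the inclusion $k\leq x\vee y$ genuinely relies on \cite[Proposition 2.14]{medina2022mbaer} to convert a bound on $\pi_{y'}(a)$ back into a bound on $a$ itself. Everything else reduces to a systematic use of Corollary \ref{idemespi}, Proposition \ref{idemcomp}, and the commutation (respectively uniqueness) hypothesis.
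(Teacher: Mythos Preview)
Your (a)$\Rightarrow$(b) is correct and takes a genuinely different route from the paper. The paper first notes that $C(\mathcal{L})$ is already a sublattice because $\mathcal{L}$ is $\mathfrak{m}$-Rickart and dual-$\mathfrak{m}$-Rickart \cite[Proposition~3.24]{medina2022mbaer}, then establishes $\pi_x(y)=x\wedge y$ for $x,y\in C(\mathcal{L})$ exactly as you do, and finally cites \cite[Proposition~3.29]{medina2022mbaer} to conclude Booleanness. You instead rebuild everything from scratch: uniqueness of complements, closure under $\wedge$ and $\vee$, and distributivity via the join-preservation of $\pi_x$. Your argument is longer but self-contained, and the idea of reading distributivity off from $\pi_x(y\vee z)=\pi_x(y)\vee\pi_x(z)$ is a nice replacement for the external citation.

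Your (b)$\Rightarrow$(a), however, has a genuine gap. You invoke Proposition~\ref{reg} for the claim ``$\ker_{\psi\varphi}$ is a complement of $\varphi(\mathbf{1})$'', but that clause in the statement of Proposition~\ref{reg} is a typo: its proof only shows that $\ker_{\varphi\psi}$ (not $\ker_{\psi\varphi}$) complements $\varphi(\mathbf{1})$. The claim you use is false in general --- already for $\varphi=\left(\begin{smallmatrix}0&1\\0&0\end{smallmatrix}\right)$ and $\psi=\left(\begin{smallmatrix}0&0\\1&0\end{smallmatrix}\right)$ acting on $k^2$ one has $\ker_{\psi\varphi}=\varphi(\mathbf{1})$. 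Once that claim is removed, your chain breaks: from $\ker_{\psi\varphi}=\ker_\varphi$ you can only conclude that $\ker_\varphi$ complements $\psi\varphi(\mathbf{1})$, not $\varphi(\mathbf{1})$, so Proposition~\ref{ker+img} does not apply. The paper instead argues directly with two idempotents $\alpha,\varepsilon$: distributivity gives $\varepsilon(\mathbf{1})=(\varepsilon(\mathbf{1})\wedge\alpha(\mathbf{1}))\vee(\varepsilon(\mathbf{1})\wedge\ker_\alpha)$, whence $\alpha\varepsilon(\mathbf{1})\leq\varepsilon(\mathbf{1})$ and symmetrically $\alpha(\ker_\varepsilon)\leq\ker_\varepsilon$; these two invariance statements force $\varepsilon\alpha=\varepsilon\alpha\varepsilon=\alpha\varepsilon$. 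If you want to salvage your strategy via Proposition~\ref{ker+img}, you would still need to prove $\varphi(\mathbf{1})=\psi\varphi(\mathbf{1})$ under the Boolean hypothesis, which is not immediate.
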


\begin{proof}
	 Note that $C(\mathcal{L})$ is a sublattice of $\mathcal{L}$ because $\mathcal{L}$ is $\mathfrak{m}$-Rickart and dual-$\mathfrak{m}$-Rickart \cite[Proposition 3.24]{medina2022mbaer}.
	 
	 (a)$\Rightarrow$(b) Let $x,y\in C(\mathcal{L})$. Then $\pi_x(y)=\pi_x\pi_y(\mathbf{1})=\pi_y\pi_x(\mathbf{1})=\pi_y(x)\leq x\wedge y$. We always have that $x\wedge y\leq\pi_x(y)$. Hence $\pi_x(y)=x\wedge y=\pi_y(x)$. It follows from \cite[Proposition 3.29]{medina2022mbaer} that $C(\mathcal{L})$ is a Boolean algebra.
	
	(b)$\Rightarrow$(a) Let $\alpha,\varepsilon\in\mathfrak{m}$ be two idempotents. We have that $\mathbf{1}=\alpha(\mathbf{1})\vee\ker_\alpha$. By hypothesis, $\varepsilon(\mathbf{1})=\varepsilon(\mathbf{1})\wedge(\alpha(\mathbf{1})\vee\ker_\alpha)=(\varepsilon(\mathbf{1})\wedge\alpha(\mathbf{1}))\vee(\varepsilon(\mathbf{1})\wedge\ker_\alpha)$. Therefore, $\alpha\varepsilon(\mathbf{1})=\alpha(\varepsilon(\mathbf{1})\wedge\alpha(\mathbf{1}))=\varepsilon(\mathbf{1})\wedge\alpha(\mathbf{1})\leq\varepsilon(\mathbf{1})$. This implies that $\varepsilon\alpha\varepsilon=\alpha\varepsilon$. Thus $\pi_{\ker_\varepsilon}\alpha\varepsilon=0$. Analogously, $\varepsilon\alpha\pi_{\ker_\varepsilon}=0$. Hence $\alpha(\ker_\varepsilon)\leq\ker_\varepsilon$. It follows that
	\[\varepsilon\alpha(a)=\varepsilon\alpha(a\vee\ker_\varepsilon)=\varepsilon\alpha(\varepsilon(a)\vee\ker_\varepsilon)=\varepsilon\alpha\varepsilon(a)\]
	for all $a\in\mathcal{L}$. Thus $\varepsilon\alpha=\varepsilon\alpha\varepsilon=\alpha\varepsilon$.
\end{proof}

Recall that a \emph{semiring} $S$ is a set with two operations $+$ and $\cdot$ such that $(S,+)$ is a commutative monoid and $(S,\cdot)$ is a monoid such that $a(b+c)=ab+ac$ and $(b+c)a=ba+ca$ for all $a,b,c\in S$.

\begin{cor}\label{corCbool}
	Let $\mathcal{L}$ be a complete modular lattice and $\mathfrak{m}$ be a submonoid of $\End_{lin}(\mathcal{L})$ closed under complements. Suppose $(\mathfrak{m},+,\circ)$ is a semiring and $\mathcal{L}$ is $\mathfrak{m}$-endoregular. If $C(\mathcal{L})$ is Boolean, then $\mathcal{L}$ is $\mathfrak{m}$-abelian.
\end{cor}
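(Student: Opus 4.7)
The plan is to apply Lemma~\ref{pife0central}: to prove $\mathcal{L}$ is $\mathfrak{m}$-abelian it is enough to show, for every idempotent $\varepsilon=\varepsilon^2\in\mathfrak{m}$ and every $\varphi\in\mathfrak{m}$, that $\pi_{\ker_\varepsilon}\varphi\varepsilon=0$. Crucially, Proposition~\ref{Cboolidemcomm} already tells us that, because $\mathcal{L}$ is $\mathfrak{m}$-endoregular and $C(\mathcal{L})$ is Boolean, any two idempotents of $\mathfrak{m}$ commute pairwise; the semiring hypothesis will upgrade this pairwise commutation to full centrality.

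Fix $\varepsilon=\varepsilon^2\in\mathfrak{m}$ and $\varphi\in\mathfrak{m}$. Since $\ker_\varepsilon$ is a complement (Proposition~\ref{idemcomp}) and $\mathfrak{m}$ is closed under complements, $\bar\varepsilon:=\pi_{\ker_\varepsilon}$ belongs to $\mathfrak{m}$. Set $x:=\bar\varepsilon\varphi\varepsilon\in\mathfrak{m}$. From $\varepsilon(\ker_\varepsilon)=\mathbf{0}$ and $\pi_{\ker_\varepsilon}(\varepsilon(\mathbf{1}))=\ker_\varepsilon\wedge\varepsilon(\mathbf{1})=\mathbf{0}$ one gets the orthogonality $\varepsilon\bar\varepsilon=\bar\varepsilon\varepsilon=0$, which immediately yields $\bar\varepsilon x=x=x\varepsilon$, $\varepsilon x=0=x\bar\varepsilon$ and $x^2=0$.

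Now I would invoke the semiring structure. Distributivity together with the identities above gives
\[(\varepsilon+x)^2=\varepsilon^2+\varepsilon x+x\varepsilon+x^2=\varepsilon+x,\qquad(\bar\varepsilon+x)^2=\bar\varepsilon^2+\bar\varepsilon x+x\bar\varepsilon+x^2=\bar\varepsilon+x,\]
so $\varepsilon+x$ and $\bar\varepsilon+x$ are idempotents of $\mathfrak{m}$. By Proposition~\ref{Cboolidemcomm} they commute, hence
\[(\varepsilon+x)(\bar\varepsilon+x)=(\bar\varepsilon+x)(\varepsilon+x).\]
Expanding both sides with the orthogonality relations collapses the left-hand side to $0$ and the right-hand side to $x+x$, so $x+x=0$ in $\mathfrak{m}$. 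The natural additive operation on a submonoid of $\End_{lin}(\mathcal{L})$ making it a semiring is the pointwise join, which is idempotent, so $x+x=x$; combining this with $x+x=0$ forces $x=0$. Thus $\pi_{\ker_\varepsilon}\varphi\varepsilon=x=0$ for every $\varphi\in\mathfrak{m}$, and Lemma~\ref{pife0central} yields the centrality of $\varepsilon$. As $\varepsilon$ was arbitrary, $\mathcal{L}$ is $\mathfrak{m}$-abelian.

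The main obstacle I anticipate is the last deduction: the commutation of the two constructed idempotents naturally produces the relation $x+x=0$, and passing from this to $x=0$ is where the semiring structure must actually bite. In the lattice-theoretic situation it is immediate because the additive operation is a join, hence idempotent; formulating the argument so that it uses only the axioms of a semiring spelled out in the paper (commutative additive monoid with zero, distributivity) is the subtle point and is exactly where the hypothesis ``$(\mathfrak{m},+,\circ)$ is a semiring'' is consumed.
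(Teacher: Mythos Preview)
Your setup is right and mirrors the paper: reduce to showing $x:=\pi_{\ker_\varepsilon}\varphi\varepsilon=0$ via Lemma~\ref{pife0central}, use that $\varepsilon+x$ is idempotent, and invoke Proposition~\ref{Cboolidemcomm} to commute idempotents. The gap is in the final step. From commuting $\varepsilon+x$ with $\bar\varepsilon+x$ you correctly obtain $x+x=0$, but the passage from $x+x=0$ to $x=0$ is not justified by the semiring axioms stated in the paper (commutative additive monoid, distributivity). Your appeal to ``the natural additive operation being pointwise join'' is an extra hypothesis: the statement does not specify what $+$ is, and pointwise join of linear endomorphisms need not even be a linear endomorphism, so there is no reason the given $+$ must be idempotent. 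In a general semiring $x+x=0$ simply does not force $x=0$ (think of $\mathbb{Z}/2\mathbb{Z}$).

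The paper avoids this entirely by commuting $\varepsilon+x$ with $\varepsilon$ itself rather than with $\bar\varepsilon+x$:
\[
x+\varepsilon=x\varepsilon+\varepsilon^{2}=(x+\varepsilon)\varepsilon=\varepsilon(x+\varepsilon)=\varepsilon x+\varepsilon^{2}=\varepsilon,
\]
since $\varepsilon x=0$. Now multiply on the left by $\bar\varepsilon$: $\bar\varepsilon x+\bar\varepsilon\varepsilon=\bar\varepsilon\varepsilon$, i.e.\ $x+0=0$, hence $x=0$. So your argument can be repaired by simply choosing the right pair of idempotents to commute; the second idempotent $\bar\varepsilon+x$ is unnecessary.
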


\begin{proof}
	By Proposition \ref{Cboolidemcomm} any two idempotents in $\mathfrak{m}$ commute. Let $\varepsilon^2=\varepsilon\in\mathfrak{m}$ and $\psi\in\pi_{\ker_\varepsilon}\mathfrak{m}\varepsilon$. Since $\mathfrak{m}$ is a semiring, $\varepsilon+\psi\in\mathfrak{m}$ is an idempotent. Therefore $\psi+\varepsilon=\psi\varepsilon+\varepsilon=(\psi+\varepsilon)\varepsilon=\varepsilon(\psi+\varepsilon)=\varepsilon\psi+\varepsilon=\varepsilon$, because $\varepsilon\psi=0$. This implies that $\psi=\psi\varepsilon=\varepsilon\psi=0$. Thus $\pi_{\ker_\varepsilon}\mathfrak{m}\varepsilon=0$. By lemma \ref{pife0central}, $\varepsilon$ is central in $\mathfrak{m}$.
\end{proof}

\begin{lemma}\label{abendocompiq}
	Let $\mathcal{L}$ be a complete modular lattice and $\mathfrak{m}$ be a submonoid of $\End_{lin}(\mathcal{L})$. Suppose that $\mathcal{L}$ is  $\mathfrak{m}$-abelian-endoregular and that there exists a linear isomorphism $\theta:[\mathbf{0},a]\to [\mathbf{0},b]$ with $a,b\in C(\mathcal{L})$. If $\iota_b\theta\pi_a$ and $\iota_a\theta^{-1}\pi_b$ are in $\mathfrak{m}$, then $a=b$.
\end{lemma}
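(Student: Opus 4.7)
The plan is to exploit the hypothesis that every idempotent of $\mathfrak{m}$ is central by producing the projections $\pi_a$ and $\pi_b$ themselves as elements of $\mathfrak{m}$, manufactured from the two given morphisms. Set $\varphi := \iota_b\theta\pi_a$ and $\psi := \iota_a\theta^{-1}\pi_b$; both lie in $\mathfrak{m}$ by hypothesis, and a direct evaluation at $\mathbf{1}$ yields $\varphi(\mathbf{1})=b$ and $\psi(\mathbf{1})=a$.

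The key computation is that $\pi_b$ acts as the identity on $[\mathbf{0},b]$ (indeed $\pi_b\iota_b=\mathrm{id}_{[\mathbf{0},b]}$ by modularity), so
$$\psi\varphi \;=\; \iota_a\theta^{-1}\pi_b\iota_b\theta\pi_a \;=\; \iota_a\theta^{-1}\theta\pi_a \;=\; \pi_a,$$
and symmetrically $\varphi\psi=\pi_b$. Since $\mathfrak{m}$ is a submonoid of $\End_{lin}(\mathcal{L})$, these compositions force $\pi_a,\pi_b\in\mathfrak{m}$.

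Now $\pi_a$ is an idempotent in $\mathfrak{m}$, so the $\mathfrak{m}$-abelian hypothesis makes it central. In particular $\pi_a\varphi=\varphi\pi_a$. But $\varphi\pi_a=\iota_b\theta\pi_a^2=\iota_b\theta\pi_a=\varphi$ trivially, so $\pi_a\varphi=\varphi$. Evaluating at $\mathbf{1}$ and using that the image of $\pi_a$ is contained in $[\mathbf{0},a]$ we obtain $b=\varphi(\mathbf{1})=\pi_a\varphi(\mathbf{1})\le a$. The symmetric argument using centrality of $\pi_b$ and $\psi\pi_b=\psi$ yields $a\le b$, and hence $a=b$.

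The only real obstacle is spotting the identity $\psi\varphi=\pi_a$; after that, the $\mathfrak{m}$-abelian hypothesis delivers the containment $b\le a$ almost for free. Notice that no additional regularity of $\varphi$ or $\psi$ is invoked beyond the centrality of the idempotents they compose into, so the conclusion uses the abelian part of the hypothesis much more than the endoregular part.
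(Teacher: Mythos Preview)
Your proof is correct and follows essentially the same route as the paper's: both hinge on the observation that $\pi_a=\psi\varphi$ and $\pi_b=\varphi\psi$ lie in $\mathfrak{m}$ and are therefore central idempotents. The paper leaves this step implicit and instead commutes $\pi_b$ with $\varphi$ (then invokes injectivity of $\theta$ to conclude $a\le b$), whereas you commute $\pi_a$ with $\varphi$ and read off $b\le a$ directly from $\pi_a\varphi=\varphi$; your version is a touch more explicit and avoids the auxiliary identity $\pi_a(b)=a\wedge b$, but the underlying idea is identical.
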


\begin{proof}
	Since $\mathcal{L}$ is  $\mathfrak{m}$-abelian-endoregular,
	\[\theta(a\wedge b)=\theta\pi_a\pi_b(\mathbf{1})=\pi_b\theta\pi_a(\mathbf{1})=\pi_b\theta(a)=\pi_b(b)=b=\theta(a).\]
	It follows that $a\wedge b=a$ because $\theta$ is an isomorphism. Thus $a\leq b$. Analogously, using the isomorphism $\theta^{-1}$, we get that $b\leq a$. Therefore, $a=b$.
\end{proof}

\begin{prop}\label{xyig}
	Let $\mathcal{L}$ be a complete modular lattice and $\mathfrak{m}$ be a submonoid of $\End_{lin}(\mathcal{L})$. Suppose that $\mathcal{L}$ is $\mathfrak{m}$-endoregular. The following conditions are equivalent:
	\begin{enumerate}[label=\emph{(\alph*)}]
		\item $\mathcal{L}$ is $\mathfrak{m}$-abelian.
		\item If $x,y\in\mathcal{L}$ are $\mathfrak{m}$-$\mathcal{L}$-generated and there exists a linear isomorphism $\theta:[\mathbf{0},x]\to [\mathbf{0},y]$, then $x=y$.
		\item If $x,y\in\mathcal{L}$ are $\mathfrak{m}$-$\mathcal{L}$-generated and $x\wedge y=\mathbf{0}$, then there are no nonzero linear morphisms from $[\mathbf{0},x]$ to $[\mathbf{0},y]$.
	\end{enumerate}
\end{prop}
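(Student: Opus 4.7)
The plan is to establish the cycle (a) $\Rightarrow$ (c) $\Rightarrow$ (b) $\Rightarrow$ (a). The key preliminary, extracted from (a), is that every $\varphi \in \mathfrak{m}$ satisfies $\varphi(a) \leq a$ whenever $a$ is $\mathfrak{m}$-$\mathcal{L}$-generated. This reprises Proposition \ref{abendofi} but can be derived without closedness under complements: for each generator $c = \varphi_i(\mathbf{1})$ of $a$, $\mathfrak{m}$-regularity produces $\psi \in \mathfrak{m}$ with $(\iota_a\varphi_i)\psi(\iota_a\varphi_i) = \iota_a\varphi_i$, so the idempotent $(\iota_a\varphi_i)\psi \in \mathfrak{m}$ has image $c$; Corollary \ref{idemespi} identifies it with $\pi_c$, so $\pi_c \in \mathfrak{m}$, and (a) makes it central. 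Hence $\varphi(c) = \pi_c(\varphi(\mathbf{1})) \leq c$, and joining yields $\varphi(a) \leq a$.

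For (a) $\Rightarrow$ (c): let $c_i$ generate $x$ and $d_j = \eta_j(\mathbf{1})$ generate $y$. Centrality of $\pi_{c_i}$ combined with the preliminary gives $\eta_j(c_i) = \pi_{c_i}(d_j) \leq c_i \wedge y = \mathbf{0}$; since $\pi_{c_i}(d_j) = \mathbf{0}$ forces $d_j \leq c_i'$ (the chosen complement of $c_i$), we obtain $y \leq c_i'$. Consequently $\theta(c_i) \leq y \leq c_i'$, so $\theta(c_i) \wedge c_i = \mathbf{0}$. The extension $\widehat{\theta}_i := \iota_y\,\theta|_{[\mathbf{0}, c_i]}\,\pi_{c_i} : \mathcal{L} \to \mathcal{L}$ has image $\theta(c_i)$, satisfies $\widehat{\theta}_i(y) = \theta(\pi_{c_i}(y)) = \theta(\mathbf{0}) = \mathbf{0}$, and by the linear-morphism structure obeys $c_i \vee \ker_{\widehat{\theta}_i} = \mathbf{1}$; combined with $\theta(c_i) \leq \ker_{\widehat{\theta}_i}$, the induced isomorphism $[\ker_{\widehat{\theta}_i}, \mathbf{1}] \to [\mathbf{0}, \theta(c_i)]$ forces $\theta(c_i) = \mathbf{0}$.

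For (c) $\Rightarrow$ (b): given an iso $\theta : [\mathbf{0}, x] \to [\mathbf{0}, y]$, I show $c_i \leq y$ for each generator $c_i$ of $x$ (so $x \leq y$; the reverse by symmetry gives $x = y$). If $c_i \not\leq y$, one extracts an $\mathfrak{m}$-$\mathcal{L}$-generated element inside $c_i$ disjoint from $y$, and the restriction of $\theta$ to this sub-complement provides a nonzero linear morphism to $[\mathbf{0}, y]$ contradicting (c); the extraction uses $\mathfrak{m}$-regularity applied to $\pi_{c_i}$ together with a suitable $\mathfrak{m}$-morphism encoding $c_i \wedge y$.

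For (b) $\Rightarrow$ (a): by Lemma \ref{pife0central} it suffices, given $\varepsilon^2 = \varepsilon \in \mathfrak{m}$ with $i = \varepsilon(\mathbf{1})$, $k = \ker_\varepsilon$, and $\varphi \in \mathfrak{m}$, to prove $\varphi(i) \leq i$. Set $\tau = \varphi\varepsilon \in \mathfrak{m}$ and pick $\psi$ with $\tau\psi\tau = \tau$; let $m = \psi\tau(\mathbf{1}) = \psi\varphi(i)$, which by Proposition \ref{reg} complements $\ker_\tau = k \vee (i \wedge \ker_\varphi)$ in $\mathcal{L}$ and is $\mathfrak{m}$-$\mathcal{L}$-generated. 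The morphism $\tau$ induces an isomorphism $[\mathbf{0}, m] \to [\mathbf{0}, \varphi(i)]$ between $\mathfrak{m}$-$\mathcal{L}$-generated elements; (b) forces $m = \varphi(i)$. The resulting identities $\varphi(i) \wedge k = \mathbf{0}$, $\varphi(i) \wedge (i \wedge \ker_\varphi) = \mathbf{0}$, $\varphi(i) \vee k \vee (i \wedge \ker_\varphi) = \mathbf{1}$, combined with a modular-lattice calculation exploiting $\overline{\tau} : [\ker_\tau, \mathbf{1}] \to [\mathbf{0}, \varphi(i)]$ (which sends $\mathbf{1} \mapsto \varphi(i)$), deliver $\varphi(i) \leq i$.

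The main obstacle lies in the modular-lattice endgame shared by (a) $\Rightarrow$ (c) and (b) $\Rightarrow$ (a): the naive observation that ``two complements of the same element coincide'' fails in non-distributive modular lattices, so the proof must genuinely use the isomorphism structure furnished by $\mathfrak{m}$-regularity to pin the target down.
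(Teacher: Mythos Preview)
Your cycle (a)$\Rightarrow$(c)$\Rightarrow$(b)$\Rightarrow$(a) runs opposite to the paper's (a)$\Rightarrow$(b)$\Rightarrow$(c)$\Rightarrow$(a), and each of your three implications contains a genuine gap that you yourself flag in the final paragraph but do not close.

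In (a)$\Rightarrow$(c), after obtaining $\theta(c_i)\leq\ker_{\widehat{\theta}_i}$ and the isomorphism $[\ker_{\widehat{\theta}_i},\mathbf{1}]\cong[\mathbf{0},\theta(c_i)]$, you assert that this forces $\theta(c_i)=\mathbf{0}$. This inference is invalid in general modular lattices (a three–element chain already shows an interval above a point can be isomorphic to the interval below it). Crucially, $\widehat{\theta}_i$ need not lie in $\mathfrak{m}$, since $\theta$ is an \emph{arbitrary} linear morphism in condition (c); hence you cannot invoke $\mathfrak{m}$-endoregularity or centrality at this point. In (c)$\Rightarrow$(b), the ``extraction of an $\mathfrak{m}$-$\mathcal{L}$-generated element inside $c_i$ disjoint from $y$'' is asserted but not produced; there is no evident $\mathfrak{m}$-morphism ``encoding $c_i\wedge y$'' unless one already has projections along complements of $c_i\wedge y$, which is exactly what is in question. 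In (b)$\Rightarrow$(a), from $m=\varphi(i)$ complementing $\ker_\tau=k\vee(i\wedge\ker_\varphi)$ you only obtain $\varphi(i)\wedge k=\mathbf{0}$; in a non-distributive modular lattice (e.g.\ $M_3$) this does not force $\varphi(i)\leq i$, and the promised ``modular-lattice calculation exploiting $\overline{\tau}$'' is not supplied.

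The paper sidesteps all three difficulties by reversing the cycle. For (c)$\Rightarrow$(a) one applies (c) directly to the pair $\varepsilon(\mathbf{1}),\ker_\varepsilon$ (disjoint, both $\mathfrak{m}$-$\mathcal{L}$-generated via $\varepsilon$ and $\pi_{\ker_\varepsilon}$), obtaining $\pi_{\ker_\varepsilon}\varphi\varepsilon=0$ and then invoking Lemma~\ref{pife0central}; no lattice calculation beyond this is needed. For (a)$\Rightarrow$(b) the work is packaged in Lemma~\ref{abendocompiq}: centrality of the projection $\pi_b$ gives $\theta(a\wedge b)=\theta(a)$ on each generating complement, hence $a\leq b$, and symmetry finishes. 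For (b)$\Rightarrow$(c) one restricts the putative nonzero morphism to a generating complement $\psi(\mathbf{1})$ of $x$ and uses (b) on the resulting isomorphic pieces. The moral is that the direction (c)$\Rightarrow$(a) is the easy hinge; trying to go (b)$\Rightarrow$(a) directly forces exactly the ``two complements need not coincide'' obstruction you identify.
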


\begin{proof}
	(a)$\Rightarrow$(b) Since $x$ is $\mathfrak{m}$-$\mathcal{L}$-generated, there exists a nonzero linear morphism $\varphi:\mathcal{L}\to[\mathbf{0},x]$ with $\iota_x\varphi\in\mathfrak{m}$. Let $a=\varphi(\mathbf{1})$ and $b=\theta(a)$. Then $\theta|:[\mathbf{0},a]\to[\mathbf{0},b]$ is a linear isomorphism. Since $\mathcal{L}$ is $\mathfrak{m}$-endoregular, $a\in C(\mathcal{L})$ and hence $b\in C(\mathcal{L})$. It follows from Lemma \ref{abendocompiq}, that $\varphi(\mathbf{1})=a=b=\theta(a)\leq y$. Since $x$ is $\mathfrak{m}$-$\mathcal{L}$-generated, $x\leq y$. Analogously, $y\leq x$.
	
	(b)$\Rightarrow$(c) Suppose that $\varphi:[\mathbf{0},x]\to[\mathbf{0},y]$ is a nonzero linear morphism. Since $x$ is $\mathfrak{m}$-$\mathcal{L}$-generated, there exists a linear morphism $\psi:\mathcal{L}\to[\mathbf{0},x]$ with $\iota_x\psi\in\mathfrak{m}$. Since $\mathcal{L}$ is $\mathfrak{m}$-endoregular, $\psi(\mathbf{1})\in C(\mathcal{L})$ and $\psi(\mathbf{1})\wedge\ker_\varphi$ is a complement in $[\mathbf{0},\psi(\mathbf{1})]$. In fact, $\mathbf{0}=(\psi(\mathbf{1})\wedge\ker_\varphi)\wedge z$ and $\psi(\mathbf{1})=(\psi(\mathbf{1})\wedge\ker_\varphi)\vee z$ with $[\mathbf{0},z]\cong[\mathbf{0},\varphi\psi(\mathbf{1})]$. By hypothesis, $z=\varphi\psi(\mathbf{1})\leq x\wedge y=\mathbf{0}$. Thus, $\varphi=0$.
	
	(c)$\Rightarrow$(a) Let $\varepsilon^2=\varepsilon\in\mathfrak{m}$. Then $\mathbf{1}=\ker_\varepsilon\vee\varepsilon(\mathbf{1})$ and $\mathbf{0}=\ker_\varepsilon\wedge\varepsilon(\mathbf{1})$. By hypothesis, there are no nonzero linear morphisms from $[\mathbf{0},\varepsilon(\mathbf{1})]$ to $[\mathbf{0},\ker_\varepsilon]$. It follows that $\pi_{\ker_\varepsilon}\varphi\varepsilon=0$ for all $\varphi\in\mathfrak{m}$. By Lemma \ref{pife0central}, $\varepsilon$ is central in $\mathfrak{m}$.
\end{proof}

\begin{cor}\label{hopfandcohopf}
	Let $\mathcal{L}$ be a complete modular lattice and $\mathfrak{m}$ be a submonoid of $\End_{lin}(\mathcal{L})$. Suppose that $\mathcal{L}$ is $\mathfrak{m}$-abelian-endoregular. The following conditions are equivalent for $\varphi\in\mathfrak{m}$:
	\begin{enumerate}[label=\emph{(\alph*)}]
		\item $\varphi$ is injective.
		\item $\varphi$ is an isomorphism.
		\item $\varphi$ is surjective.
	\end{enumerate}	
\end{cor}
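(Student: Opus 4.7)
The plan is straightforward once one unpacks what ``injective,'' ``surjective,'' and ``isomorphism'' mean for a linear morphism. From Definition \ref{linmor} one sees that $\varphi\in\End_{lin}(\mathcal{L})$ is injective exactly when $\ker_\varphi=\mathbf{0}$, surjective exactly when $\varphi(\mathbf{1})=\mathbf{1}$, and an isomorphism of $\mathcal{L}$ exactly when both hold (so that the induced map $\overline{\varphi}\colon[\ker_\varphi,\mathbf{1}]\to[\mathbf{0},\varphi(\mathbf{1})]$ is an isomorphism from $\mathcal{L}$ onto itself). Consequently, (b) is tautologically equivalent to (a) combined with (c), so the real task is to establish the equivalence of (a) and (c).

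For this I would invoke Proposition \ref{ker+img}: since $\mathcal{L}$ is $\mathfrak{m}$-abelian-endoregular it is $\mathfrak{m}$-Rickart, dual-$\mathfrak{m}$-Rickart, and $\mathfrak{m}$-abelian, so for every $\varphi\in\mathfrak{m}$ the identities
\[
\mathbf{0}=\ker_\varphi\wedge\varphi(\mathbf{1}), \qquad \mathbf{1}=\ker_\varphi\vee\varphi(\mathbf{1})
\]
hold simultaneously. Plugging $\ker_\varphi=\mathbf{0}$ into the join identity immediately gives $\varphi(\mathbf{1})=\mathbf{1}$, proving (a)$\Rightarrow$(c); plugging $\varphi(\mathbf{1})=\mathbf{1}$ into the meet identity immediately gives $\ker_\varphi=\mathbf{0}$, proving (c)$\Rightarrow$(a). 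Combined with the remark in the previous paragraph, all three conditions are equivalent.

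The only delicate point is that Proposition \ref{ker+img} is stated under the additional hypothesis that $\mathfrak{m}$ is closed under complements, whereas the corollary does not spell this out; the expected main obstacle (if any) is therefore just a bookkeeping issue. Should one wish to bypass this, the two identities can be produced directly from $\mathfrak{m}$-endoregularity and $\mathfrak{m}$-abelianness: pick $\psi\in\mathfrak{m}$ with $\varphi\psi\varphi=\varphi$, observe that $\varepsilon=\varphi\psi\in\mathfrak{m}$ is idempotent with $\varepsilon(\mathbf{1})=\varphi(\mathbf{1})$, apply Proposition \ref{idemcomp} to get $\mathbf{0}=\ker_\varepsilon\wedge\varphi(\mathbf{1})$ and $\mathbf{1}=\ker_\varepsilon\vee\varphi(\mathbf{1})$, and then use centrality of $\varepsilon$ in $\mathfrak{m}$ (together with the lattice isomorphism $\overline{\varphi}$) to transfer these identities from $\ker_\varepsilon$ to $\ker_\varphi$. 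Either route completes the proof in a few lines.
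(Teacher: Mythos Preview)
Your argument is correct and is, if anything, a little more economical than the paper's. The paper splits the equivalence asymmetrically: for (a)$\Leftrightarrow$(b) it appeals to Proposition~\ref{xyig} (isomorphic $\mathfrak{m}$-$\mathcal{L}$-generated elements coincide, applied to $\mathbf{1}$ and $\varphi(\mathbf{1})$), and only for (b)$\Leftrightarrow$(c) does it invoke Proposition~\ref{ker+img}. You instead draw \emph{both} implications (a)$\Rightarrow$(c) and (c)$\Rightarrow$(a) directly from the single pair of identities $\mathbf{1}=\ker_\varphi\vee\varphi(\mathbf{1})$ and $\mathbf{0}=\ker_\varphi\wedge\varphi(\mathbf{1})$ supplied by Proposition~\ref{ker+img}; this avoids the detour through Proposition~\ref{xyig} entirely and makes the corollary a two-line consequence. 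Your observation about the ``closed under complements'' hypothesis is also apt: the paper's own proof of (b)$\Leftrightarrow$(c) invokes Proposition~\ref{ker+img}, which carries that hypothesis, so the issue is present in the original as well; your sketched workaround via $\varepsilon=\varphi\psi$ and Proposition~\ref{idemcomp} (noting that the implication (b)$\Rightarrow$(a) of Proposition~\ref{reg} does not use closure under complements) is a reasonable way to tighten this if desired.
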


\begin{proof}
	(a)$\Leftrightarrow$(b) Suppose that $\varphi$ is injective. Then, $\varphi:[\mathbf{0},\mathbf{1}]\to[\mathbf{0},\varphi(\mathbf{1})]$ is a linear isomorphism. By Proposition \ref{xyig}, $\mathbf{1}=\varphi(\mathbf{1})$. The converse is obvious. 
	
	(b)$\Leftrightarrow$(c) It follows from Proposition \ref{ker+img}.
\end{proof}

\begin{defn}
    A bounded lattice $\mathcal{L}$ is called \emph{Hopfian} (resp. \emph{cohopfian}) if every injective (resp. surjective) linear endomorphism $\varphi\in\End_{lin}(\mathcal{L})$ is an isomorphism. 
\end{defn}

\begin{cor}[{\cite[Remark 2.23(ii)]{leemodules}}]
	Every abelian endoregular module is Hopfian and cohopfian.
\end{cor}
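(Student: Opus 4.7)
The plan is to lift the module-theoretic statement to the lattice of submodules $\Lambda(M)$ and invoke Corollary \ref{hopfandcohopf} with the submonoid $\mathfrak{E}_M \subseteq \End_{lin}(\Lambda(M))$.

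First I would verify that $\Lambda(M)$ is $\mathfrak{E}_M$-abelian-endoregular. The endoregularity is immediate from the corollary just after Definition \ref{defendoreg}, which converts $M$ endoregular into $\Lambda(M)$ being $\mathfrak{E}_M$-endoregular. For the abelianness, I use that every idempotent $\varepsilon \in \mathfrak{E}_M$ is of the form $\pi_N$ for some direct summand $N = \varepsilon(\mathbf{1})$ of $M$ by Corollary \ref{idemespi}, and such $\pi_N$ arises as $e_\ast$ for the module idempotent $e \in \End_R(M)$ projecting onto $N$ along a complementary summand. Since $M$ is abelian, $e$ is central in $\End_R(M)$, so $ef = fe$ for every $f \in \End_R(M)$, whence $\varepsilon f_\ast = e_\ast f_\ast = (ef)_\ast = (fe)_\ast = f_\ast e_\ast = f_\ast \varepsilon$; this shows $\varepsilon$ is central in $\mathfrak{E}_M$.

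Next I would translate the module conditions to the lattice conditions: for any $f \in \End_R(M)$, one has $\ker_{f_\ast} = \Ker f$ and $f_\ast(\mathbf{1}) = f(M)$, so $f$ is injective (respectively surjective) as an $R$-homomorphism exactly when $f_\ast$ is injective (respectively surjective) as a linear morphism on $\Lambda(M)$. With both preliminaries in place, applying Corollary \ref{hopfandcohopf} to $f_\ast \in \mathfrak{E}_M$: if $f$ is injective, then $f_\ast$ is injective, hence a linear isomorphism, giving $f(M) = \mathbf{1} = M$, so $f$ is bijective and hence an $R$-isomorphism, proving $M$ is Hopfian. Dually, if $f$ is surjective, then $f_\ast$ is surjective, hence a linear isomorphism, giving $\Ker f = \mathbf{0}$ and $f$ is an $R$-isomorphism, so $M$ is cohopfian.

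The only delicate step is the $\mathfrak{E}_M$-abelian check: it rests on the fact, guaranteed by Corollary \ref{idemespi}, that lattice idempotents in $\End_{lin}(\Lambda(M))$ are projections onto direct summands and therefore lift to (necessarily central, by hypothesis) module idempotents, so the monoid homomorphism $(-)_\ast : \End_R(M) \to \End_{lin}(\Lambda(M))$ transports the centrality from $\End_R(M)$ to $\mathfrak{E}_M$.
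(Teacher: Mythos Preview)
Your proof is correct and follows the paper's approach of applying Corollary \ref{hopfandcohopf} to $f_\ast \in \mathfrak{E}_M$. You are actually more thorough: the paper tacitly assumes $\Lambda(M)$ is $\mathfrak{E}_M$-abelian without verifying it, and then performs an unnecessary element-level argument to deduce surjectivity of $f$ from $f_\ast$ being a lattice isomorphism, whereas your direct reading $f(M)=f_\ast(\mathbf{1})=M$ already suffices.
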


\begin{proof}
	Suppose that $f:M\to M$ is a monomorphism. By Corollary \ref{hopfandcohopf} , $f_\ast:\Lambda(M)\to \Lambda(M)$ is an isomorphism. Let $x\in M$ and consider $Rx\leq M$. Hence there exists $N\leq M$ such that $f_\ast(N)=Rx$, that is, $f(N)=Rx$. Therefore, there is $n\in N$ such that $f(n)=x$. This implies that $f$ is surjective and so $f$ is an isomorphism. The other condition is analogous.
\end{proof}

For a complete lattice $\mathcal{L}$, its \emph{radical} is defined as $\Rad(\mathcal{L})=\bigwedge\{c\in\mathcal{L}\mid c\;\text{coatom}\}$. In a dual way, its \emph{Socle} is defined as $\Soc(\mathcal{L})=\bigvee\{a\in\mathcal{L}\mid a\;\text{atom}\}$.

\begin{cor}
	Let $\mathcal{L}$ be a complete modular lattice with $\Rad(\mathcal{L})\neq\mathcal{L}$. If $\mathcal{L}$ is abelian endoregular, then $\mathcal{L}$ has at most one atom. Moreover, if $\mathcal{L}$ has an atom $a$, then there exists $c\in\mathcal{L}$ such that $a\wedge c=\mathbf{0}$, $a\vee c=\mathbf{1}$, and $\Soc([\mathbf{0},c])=\mathbf{0}$.
\end{cor}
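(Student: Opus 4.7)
The plan is to exhibit, for each atom of $\mathcal{L}$, an explicit linear endomorphism of $\mathcal{L}$ whose image is exactly that atom, and then to invoke Proposition \ref{xyig} (for uniqueness) together with Theorem \ref{kerimgsumm} (to produce the complement). Since $\Rad(\mathcal{L})\neq\mathcal{L}$, there exists a coatom $m\in\mathcal{L}$. Given any atom $a\in\mathcal{L}$, I would define $\varphi_a:\mathcal{L}\to[\mathbf{0},a]$ by $\varphi_a(x)=\mathbf{0}$ if $x\leq m$ and $\varphi_a(x)=a$ otherwise. A direct check, using that $m$ is a coatom and $a$ is an atom so that both $[m,\mathbf{1}]$ and $[\mathbf{0},a]$ are two-element chains, shows that $\varphi_a$ is a linear morphism with $\ker_{\varphi_a}=m$ and $\varphi_a(\mathbf{1})=a$. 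Consequently $\iota_a\varphi_a$ lies in $\End_{lin}(\mathcal{L})$ and has image $a$, witnessing that every atom is $\End_{lin}(\mathcal{L})$-$\mathcal{L}$-generated.

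Next I would apply Proposition \ref{xyig} to bound the number of atoms: if $a_1,a_2\in\mathcal{L}$ are atoms, then $[\mathbf{0},a_1]$ and $[\mathbf{0},a_2]$ are linearly isomorphic (both being two-element chains), both are $\End_{lin}(\mathcal{L})$-$\mathcal{L}$-generated by the previous step, and $\mathcal{L}$ is abelian endoregular; hence the implication (a)$\Rightarrow$(b) of Proposition \ref{xyig} forces $a_1=a_2$.

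For the moreover clause, suppose $a$ is an atom. Then $a=(\iota_a\varphi_a)(\mathbf{1})$ is the image of an element of $\End_{lin}(\mathcal{L})$, so Theorem \ref{kerimgsumm} ensures that $a$ is a complement in $\mathcal{L}$; let $c$ be a complement of $a$. If $b$ were an atom of $[\mathbf{0},c]$, then $b$ would also be an atom of $\mathcal{L}$ (any element strictly between $\mathbf{0}$ and $b$ in $\mathcal{L}$ would automatically lie in $[\mathbf{0},c]$), so by the uniqueness established above $b=a$; but then $a=b\leq c$ combined with $a\wedge c=\mathbf{0}$ forces $a=\mathbf{0}$, a contradiction. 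Hence $[\mathbf{0},c]$ has no atoms and $\Soc([\mathbf{0},c])=\mathbf{0}$.

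The first step is the main obstacle: producing a linear endomorphism that realises a prescribed atom as its image requires combining the coatom furnished by $\Rad(\mathcal{L})\neq\mathcal{L}$ with the two-chain structure of $[\mathbf{0},a]$, and both conditions of Definition \ref{linmor} must be checked directly for the candidate $\varphi_a$. Once $\varphi_a$ is in hand, everything else reduces to clean applications of the already-established Proposition \ref{xyig} and Theorem \ref{kerimgsumm}.
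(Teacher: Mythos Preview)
Your proposal is correct and follows essentially the same route as the paper: construct, for a given atom $a$ and a coatom $m$ (available since $\Rad(\mathcal{L})\neq\mathbf{1}$), a linear endomorphism with kernel $m$ and image $a$, use this to show every atom is $\mathcal{L}$-generated, then invoke Proposition~\ref{xyig} for uniqueness and endoregularity for the complement. The only cosmetic difference is that the paper appeals (implicitly) to Proposition~\ref{ker+img} to see that the coatom $m$ itself serves as the complement of $a$, whereas you cite Theorem~\ref{kerimgsumm} to obtain \emph{some} complement $c$; both yield the conclusion.
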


\begin{proof}
	If $\mathcal{L}$ has an atom $a$, then there is a coatom $c\in\mathcal{L}$ and a linear morphism $\varphi:\mathcal{L}\to\mathcal{L}$ such that $\ker_{\varphi}=c$ and $\varphi(\mathbf{1})=a$. This implies that every coatom in $\mathcal{L}$ is $\mathcal{L}$-generated. By Proposition \ref{xyig}, there is at most one atom. Now suppose that there is one atom $a\in\mathcal{L}$. By the above, there exists a coatom $c\in\mathcal{L}$ such that $\mathbf{1}=c\vee a$ and $\mathbf{0}=c\wedge a$. Since $a$ is the unique atom in $\mathcal{L}$, then $\Soc([\mathbf{0},c])=\mathbf{0}$.
\end{proof}

\begin{cor}
	Let $\mathcal{L}$ be a finite complete modular lattice. Then $\mathcal{L}$ is abelian endoregular if and only if $\mathcal{L}\cong 2$.
\end{cor}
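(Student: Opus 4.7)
The plan is to handle the easy direction first and then reduce the hard direction to the preceding corollary about $\Rad(\mathcal{L})\neq\mathcal{L}$.

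For the reverse implication, I would observe that if $\mathcal{L}\cong 2$, then the only linear endomorphisms of $\mathcal{L}$ are the identity and the zero map: any linear morphism must send $\mathbf{0}$ to $\mathbf{0}$, and the image of $\mathbf{1}$ can only be $\mathbf{0}$ or $\mathbf{1}$. Both maps are idempotent and commute with each other and with themselves, so the endomorphism monoid $\{0,\mathrm{id}\}$ is trivially regular and every idempotent is central, i.e.\ $\mathcal{L}$ is abelian endoregular.

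For the forward implication, I would assume that $\mathcal{L}$ is a finite abelian endoregular lattice with $\mathbf{0}\neq\mathbf{1}$ (the case $\mathbf{0}=\mathbf{1}$ being vacuous) and derive $\mathcal{L}=\{\mathbf{0},\mathbf{1}\}$. Being finite and nontrivial, $\mathcal{L}$ possesses at least one coatom, hence the meet defining $\Rad(\mathcal{L})$ is strictly below $\mathbf{1}$; this places us in the hypotheses of the preceding corollary. Finiteness also guarantees that $\mathcal{L}$ has at least one atom, so that corollary yields exactly one atom $a$ together with an element $c\in\mathcal{L}$ satisfying $a\wedge c=\mathbf{0}$, $a\vee c=\mathbf{1}$, and $\Soc([\mathbf{0},c])=\mathbf{0}$. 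The interval $[\mathbf{0},c]$ is itself a finite lattice, and any finite lattice with a strictly positive top element has atoms contributing to its socle; hence $\Soc([\mathbf{0},c])=\mathbf{0}$ forces $c=\mathbf{0}$. Consequently $a=a\vee c=\mathbf{1}$, so $\mathbf{1}$ covers $\mathbf{0}$ and $\mathcal{L}=\{\mathbf{0},\mathbf{1}\}\cong 2$.

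The main obstacle, such as it is, lies entirely in invoking the previous corollary correctly: one must verify that in a finite lattice with $\mathbf{0}\neq\mathbf{1}$ the radical is strictly below $\mathbf{1}$ (so that its hypothesis applies) and that finiteness of $[\mathbf{0},c]$ prevents its socle from being $\mathbf{0}$ unless $c=\mathbf{0}$. Both are immediate from the existence of coatoms, respectively atoms, in any nontrivial finite bounded lattice, so no further work is required beyond these observations.
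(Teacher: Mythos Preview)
Your proof is correct and matches the paper's intended approach: the corollary is stated there without proof, as an immediate consequence of the preceding result on lattices with $\Rad(\mathcal{L})\neq\mathbf{1}$, and you have spelled out exactly that deduction. One small wording issue: the case $\mathbf{0}=\mathbf{1}$ is not ``vacuous'' for the forward direction---the one-point lattice is trivially abelian endoregular yet not isomorphic to $2$---so either the paper's standing convention $\mathbf{0}\neq\mathbf{1}$ is in force or the statement should be read modulo that case; your argument is fine once $\mathbf{0}\neq\mathbf{1}$ is assumed.
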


\begin{lemma}\label{compdecomp}
	Let $\mathcal{L}$ be an upper-continuous complete modular lattice and $\{a_i\}_I$ be an independent family of elements of $\mathcal{L}$ such that $\bigvee_{i\in I}a_i=\mathbf{1}$. Suppose that each interval $[\mathbf{0},a_i]$ has a decomposition $a_i=b_i\vee c_i$ and $b_i\wedge c_i=\mathbf{0}$. Then $\bigvee_{i\in I}b_i$ is a complement of $\bigvee_{i\in I}c_i$ in $\mathcal{L}$.
\end{lemma}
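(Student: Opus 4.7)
The plan is to establish the two conditions for being a complement: the join is $\mathbf{1}$ and the meet is $\mathbf{0}$. The first is immediate because linear morphism arguments are not needed: since joins distribute over themselves, $\bigl(\bigvee_{i\in I}b_i\bigr)\vee\bigl(\bigvee_{i\in I}c_i\bigr)=\bigvee_{i\in I}(b_i\vee c_i)=\bigvee_{i\in I}a_i=\mathbf{1}$. All the work lies in proving $\bigl(\bigvee_{i\in I}b_i\bigr)\wedge\bigl(\bigvee_{i\in I}c_i\bigr)=\mathbf{0}$.

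To handle the meet, I first reduce to the finite case by two applications of upper-continuity. Writing $B_F=\bigvee_{i\in F}b_i$ and $C_F=\bigvee_{i\in F}c_i$ for finite $F\subseteq I$, the families $\{B_F\}$ and $\{C_F\}$ are directed, so upper-continuity gives
\[
\Bigl(\bigvee_{i\in I}b_i\Bigr)\wedge\Bigl(\bigvee_{i\in I}c_i\Bigr)=\bigvee_{F,F'}(B_{F'}\wedge C_F).
\]
For any finite $F,F'$, setting $G=F\cup F'$ we have $B_{F'}\wedge C_F\le B_G\wedge C_G$, so it is enough to show $B_G\wedge C_G=\mathbf{0}$ for every finite $G\subseteq I$.

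For the finite case, I will induct on $n=|G|$, simultaneously maintaining the stronger statement $B_G\vee C_G=\bigvee_{i\in G}a_i$. The base $n=1$ is the hypothesis $b_i\wedge c_i=\mathbf{0}$. For the inductive step, enumerate $G=\{1,\ldots,n\}$ and set $P=B_{\{1,\ldots,n-1\}}$, $Q=C_{\{1,\ldots,n-1\}}$, $A=\bigvee_{i<n}a_i$, with $P\wedge Q=\mathbf{0}$ and $P\vee Q=A$ by induction. Put $z=(P\vee b_n)\wedge(Q\vee c_n)$. Bounding $P,Q\le A$ gives $z\le(A\vee b_n)\wedge(A\vee c_n)$, and two successive applications of the modular law---first pulling out $A$, then using $b_n\wedge(A\vee c_n)\le a_n\wedge(A\vee c_n)=c_n\vee(a_n\wedge A)=c_n$ via the independence $a_n\wedge A=\mathbf{0}$---yield $(A\vee b_n)\wedge(A\vee c_n)=A$. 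Thus $z\le A$. Now modularity once more gives $z=z\wedge A\le(P\vee b_n)\wedge A=P\vee(b_n\wedge A)=P$, and symmetrically $z\le Q$, so $z\le P\wedge Q=\mathbf{0}$ by the inductive hypothesis.

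The main subtlety is the step $z\le A$: without it one cannot conclude $z=\mathbf{0}$, because in a modular lattice an element $z\le A\vee a_n$ with $z\wedge A=z\wedge a_n=\mathbf{0}$ need not vanish (the diamond $M_3$ is a counterexample). The argument above avoids that pitfall by exploiting the finer information that $z$ is dominated both by $A\vee b_n$ and by $A\vee c_n$, which forces $z$ into $A$ by modularity combined with $b_n\wedge c_n=\mathbf{0}$.
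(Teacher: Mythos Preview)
Your proof is correct, and the approach differs from the paper's in an instructive way. The paper fixes the full join $\bigvee_{i\in I}c_i$ and argues that the family $\{b_1,\ldots,b_k\}\cup\{c_i\}_{i\in I}$ is independent for every finite $k$, invoking two cited lemmas on independence in modular lattices; a single application of upper-continuity on the $b$-side then finishes. You instead apply upper-continuity on \emph{both} sides to reduce to the purely finite statement $B_G\wedge C_G=\mathbf{0}$, and then run a self-contained modular-law induction on $|G|$. Your route avoids external references and keeps the argument elementary, at the cost of a slightly more delicate inductive step; the paper's route yields the stronger intermediate fact that $\{b_j\}_{j\in F}\cup\{c_i\}_{i\in I}$ is independent for every finite $F$. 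One small expository point: after you obtain $b_n\wedge(A\vee c_n)\le c_n$, the jump to $(A\vee b_n)\wedge(A\vee c_n)=A$ tacitly uses that $b_n\wedge(A\vee c_n)\le b_n\wedge c_n=\mathbf{0}$; stating this explicitly would make the step easier to follow.
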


\begin{proof}
	It is clear that $\left( \bigvee_{i\in I} b_i\right)\vee\left( \bigvee_{i\in I}c_i\right) =\mathbf{1}$. On the other hand, since each $c_i\leq a_i$, the set $\{c_i\}_I$ is independent in $\mathcal{L}$.	By modularity, 
	\[a_j\wedge\left( \bigvee_{i\in I}c_i\right)= c_j\vee \left(a_j\wedge \bigvee_{i\neq j}c_i \right)\leq c_j\vee \left(a_j\wedge \bigvee_{i\neq j}a_i \right)=c_j.\]
	
	Then
	\[b_j\wedge\left( \bigvee_{i\in I}c_i\right)=b_j\wedge a_j\wedge\left( \bigvee_{i\in I}c_i\right)\leq b_j\wedge c_j=\mathbf{0}\]
	for all $j\in I$. It follows from \cite[Proposition 6.1]{calugareanu2013lattice} that $\{b_j\}\cup\{c_i\}_I$ is independent in $\mathcal{L}$ for each $j\in I$. Let $\{b_1,...,b_k\}$ be a finite subset of $\{b_i\}_I$. We have that $\{b_1\}\cup\{c_i\}_I$ is independent in $\mathcal{L}$. Now,
	\[b_2\wedge\left( b_1\vee\bigvee_{i\in I}c_i\right)=b_2\wedge\left( a_1\vee\bigvee_{i\neq 1}c_i\right)=(b_2\wedge a_2)\wedge\left( a_1\vee\bigvee_{i\neq 1}c_i\right)\]
	\[=b_2\wedge \left( c_2\vee \left( a_2\wedge \left( a_1\vee\bigvee_{i\neq 1,2}c_i\right)\right)\right)= b_2\wedge c_2=\mathbf{0} \]
	because
	\[a_2\wedge \left( a_1\vee\bigvee_{i\neq 1,2}c_i\right)\leq a_2\wedge \left( a_1\vee\bigvee_{i\neq 1,2}a_i\right)=\mathbf{0}.\]
	Thus $\{b_2,b_1\}\cup\{c_i\}_I$ is independent in $\mathcal{L}$. By induction $\{b_1,...,b_k\}\cup\{c_i\}_I$ is independent in $\mathcal{L}$. It follows that $\left( \bigvee_{i\in F}b_i\right) \wedge \left( \bigvee_{i\in I}c_i\right) =\mathbf{0}$ for every finite subset $F\subseteq I$ by \cite[Lemma 6.2]{calugareanu2013lattice}. 
	
	Let $X$ be the set of finite joins of elements of $\{b_i\}_I$, that is, $x\in X$ if and only if $x=b_{i_1}\vee\cdots\vee b_{i_k}$. Then $X$ is directed and $\bigvee X=\bigvee_{i\in I} b_i$. Note that $x\wedge\left( \bigvee_{i\in I}c_i\right)=\mathbf{0}$ for all $x\in X$. Since $\mathcal{L}$ is upper-continuous, 
	\[\left( \bigvee_{i\in I} b_i\right)\wedge\left( \bigvee_{i\in I}c_i\right) =\left( \bigvee X\right) \wedge\left( \bigvee_{i\in I}c_i\right)=\bigvee_{x\in X}\left( x\wedge\left( \bigvee_{i\in I}c_i\right)\right) =\mathbf{0}.\]
\end{proof}

Recall that an element $a\in\mathcal{L}$ is \emph{fully invariant} if $\varphi(a)\leq a$ for all $\varphi\in\End_{lin}(\mathcal{L})$.

\begin{prop}\label{compendo}
	Suppose $\mathcal{L}$ is an upper-continuous complete modular lattice. Let $\{a_i\}_I$ be an independent family of fully invariant elements of $\mathcal{L}$ such that $\bigvee_{i\in I}a_i=\mathbf{1}$. The following conditions are equivalent:
	\begin{enumerate}[label=\emph{(\alph*)}]
		\item $\mathcal{L}$ is endoregular.
		\item $[\mathbf{0},a_i]$ is endoregular.
	\end{enumerate}
\end{prop}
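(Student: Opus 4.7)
The plan is to prove both directions via the characterization of endoregularity in Theorem~\ref{kerimgsumm}(d). For (a)$\Rightarrow$(b), given $\alpha\in\End_{lin}([\mathbf{0},a_i])$, I would extend $\alpha$ to $\widehat{\alpha}:=\iota_{a_i}\alpha\pi_{a_i}\in\End_{lin}(\mathcal{L})$ (using that $a_i$ has complement $\bigvee_{j\neq i}a_j$), apply endoregularity of $\mathcal{L}$ to obtain $\psi\in\End_{lin}(\mathcal{L})$ with $\widehat{\alpha}\psi\widehat{\alpha}=\widehat{\alpha}$, and then restrict $\psi$ to $[\mathbf{0},a_i]$ via full invariance of $a_i$; a direct computation on elements $x\leq a_i$ yields $\alpha\,\psi|_{a_i}\,\alpha=\alpha$.

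For (b)$\Rightarrow$(a), fix $\varphi\in\End_{lin}(\mathcal{L})$. Full invariance gives $\varphi|_{a_i}\in\End_{lin}([\mathbf{0},a_i])$, whose kernel is $k_i:=\ker_{\varphi|_{a_i}}=\ker_\varphi\wedge a_i$. Endoregularity of $[\mathbf{0},a_i]$ together with Theorem~\ref{kerimgsumm} provides a complement $c_i$ of $k_i$ in $a_i$ and a complement $d_i$ of $\varphi(a_i)$ in $a_i$. Applying Lemma~\ref{compdecomp} to the decompositions $a_i=\varphi(a_i)\vee d_i$ and $a_i=k_i\vee c_i$ yields complements $\bigvee d_i$ of $\varphi(\mathbf{1})=\bigvee\varphi(a_i)$ and $\bigvee c_i$ of $\bigvee k_i$ in $\mathcal{L}$. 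Thus $\varphi(\mathbf{1})$ is a complement, and since $\varphi(\bigvee k_i)=\mathbf{0}$ gives $\bigvee k_i\leq\ker_\varphi$, modularity reduces the task to the single equality
\[
\ker_\varphi\wedge\bigvee_{i\in I}c_i=\mathbf{0},
\]
which will also give $\ker_\varphi=\bigvee k_i$, a complement.

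The main obstacle is this last display, since in a general modular lattice the containment $\bigvee_i(\ker_\varphi\wedge a_i)\leq\ker_\varphi$ can be strict. To force equality I would first prove a central-projection lemma: if $u,v\in\mathcal{L}$ are complements of each other and both fully invariant, then $\pi_u$ is central in $\End_{lin}(\mathcal{L})$. The inequality $\varphi\pi_u(x)\leq\pi_u\varphi(x)$ follows from $\pi_u(x)\leq u\wedge(x\vee v)$ together with $\varphi(u)\leq u$ and $\varphi(v)\leq v$; the reverse uses the inequality $x\leq\pi_u(x)\vee\pi_v(x)$ (cf. \cite[Proposition~2.14]{medina2022mbaer}) combined with $\pi_u\varphi\pi_v(x)\leq\pi_u(v)=\mathbf{0}$. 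With this lemma, the two-summand case ($\mathbf{1}=u\vee v$ with $u,v$ fully invariant complements) falls out: setting $T:=\ker_\varphi\wedge(c_u\vee c_v)$, centrality gives $\pi_u(T)\leq k_u\wedge c_u=\mathbf{0}$ and symmetrically $\pi_v(T)=\mathbf{0}$, so $T\leq\pi_u(T)\vee\pi_v(T)=\mathbf{0}$. Induction on $|F|$ (noting that finite joins of fully invariant elements of $\mathcal{L}$ are fully invariant, and that full invariance in $\mathcal{L}$ descends to $[\mathbf{0},\bigvee_{i\in F}a_i]$ via the $\iota\theta\pi$-extension trick) then shows $[\mathbf{0},\bigvee_{i\in F}a_i]$ is endoregular for every finite $F\subseteq I$, yielding $\ker_\varphi\wedge\bigvee_{i\in F}c_i=\mathbf{0}$. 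Finally, upper-continuity applied to the directed family $\{\bigvee_{i\in F}c_i\}_F$ delivers the displayed equality $\ker_\varphi\wedge\bigvee_i c_i=\bigvee_F(\ker_\varphi\wedge\bigvee_{i\in F}c_i)=\mathbf{0}$, completing the proof.
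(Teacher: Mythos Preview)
Your proof is correct and follows the same high-level strategy as the paper: both directions go through Theorem~\ref{kerimgsumm}(d), and in (b)$\Rightarrow$(a) both use Lemma~\ref{compdecomp} to assemble the local complements of $\varphi(a_i)$ and of $\ker_\varphi\wedge a_i$ into global complements of $\varphi(\mathbf{1})$ and $\ker_\varphi$.

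The one substantive difference is how you reach $\ker_\varphi=\bigvee_i(\ker_\varphi\wedge a_i)$. The paper does this in a single line: since $a_i$ and $\bigvee_{j\neq i}a_j$ are both fully invariant,
\[
\varphi\bigl(\pi_{a_i}(\ker_\varphi)\bigr)\;\leq\;\varphi\Bigl(\ker_\varphi\vee\bigvee_{j\neq i}a_j\Bigr)\wedge\varphi(a_i)\;\leq\;\Bigl(\bigvee_{j\neq i}a_j\Bigr)\wedge a_i=\mathbf{0},
\]
so $\pi_{a_i}(\ker_\varphi)\leq\ker_\varphi\wedge a_i$ for every $i$, and the paper then asserts $\ker_\varphi=\bigvee_i(\ker_\varphi\wedge a_i)$ directly. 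Your route---central-projection lemma, two-summand base case, induction over finite $F$, then upper-continuity on the directed family $\{\bigvee_{i\in F}c_i\}_F$---arrives at the equivalent statement $\ker_\varphi\wedge\bigvee_i c_i=\mathbf{0}$. Your central-projection lemma is in fact a strengthening of the paper's computation (it gives $\varphi\pi_{a_i}=\pi_{a_i}\varphi$, not just $\varphi\pi_{a_i}(\ker_\varphi)=\mathbf{0}$), and your explicit handling of the infinite index set makes rigorous a step the paper leaves implicit. So the paper's argument is shorter, but yours is more self-contained on exactly the point where the paper is terse.
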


\begin{proof}
	(a)$\Rightarrow$(b) It follows from \cite[Proposition 3.13 and Proposition 3.14]{medina2022mbaer} and Theorem \ref{kerimgsumm}.
	
	(b)$\Rightarrow$(a) Let $\varphi:\mathcal{L}\to\mathcal{L}$ be a linear morphism. For any $i\in I$, $\varphi(a_i)\leq a_i$. This implies that $\varphi|:[\mathbf{0},a_i]\to[\mathbf{0},a_i]$ is a linear morphism. Therefore, $\ker_\varphi\wedge a_i$ is a complement in $[\mathbf{0},a_i]$. On the other hand,
	\[\varphi(\pi_{a_i}(\ker_\varphi))=\varphi\left(\left(\ker_\varphi\vee \bigvee_{i\neq j}a_j\right)\wedge a_i \right)\leq \varphi\left(\ker_\varphi\vee \bigvee_{i\neq j}a_j\right)=\varphi\left(\bigvee_{i\neq j}a_j\right).\]
	Hence,
	\[\varphi(\pi_{a_i}(\ker_\varphi))\leq \varphi\left(\bigvee_{i\neq j}a_j\right)\wedge\varphi(a_i)\leq\left(\bigvee_{i\neq j}a_j\right)\wedge a_i=\mathbf{0}.\]
	Thus, $\pi_{a_i}(\ker_\varphi)\leq\ker_\varphi\wedge a_i$ for all $i\in I$. This implies that $\ker_\varphi=\bigvee_{i\in I}\ker_\varphi\wedge a_i$. Then, $\ker_\varphi$ is a complement in $[\mathbf{0},\bigvee_{i=1}^na_i]=\mathcal{L}$ by Lemma \ref{compdecomp}. On the other hand, $\varphi(a_i)$ is complemented in $[\mathbf{0},a_i]$ for all $i\in I$. Therefore $\varphi(\mathbf{1})=\varphi\left( \bigvee_{i\in I}a_i\right)=\bigvee_{i\in I}\varphi(a_i)$ is complemented in $\mathcal{L}$ by Lemma \ref{compdecomp}. It follows from Theorem \ref{kerimgsumm} that $\mathcal{L}$ is endoregular.
\end{proof}

\begin{cor}
	Suppose $\mathcal{L}$ is an upper-continuous complete modular lattice. Let $\{a_i\}_I$ be an independent family of elements of $\mathcal{L}$ such that $\bigvee_{i\in I}a_i=\mathbf{1}$. The following conditions are equivalent:
	\begin{enumerate}[label=\emph{(\alph*)}]
		\item $\mathcal{L}$ is abelian endoregular.
		\item $[\mathbf{0},a_i]$ is abelian endoregular and $a_i$ is fully invariant in $\mathcal{L}$ for all $i\in I$.
	\end{enumerate}
\end{cor}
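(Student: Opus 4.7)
The plan is to bootstrap Proposition \ref{compendo} with the abelian machinery of Proposition \ref{abendofi}, Corollary \ref{compabend}, and Lemma \ref{pife0central}. Throughout, observe that independence together with $\bigvee_I a_i=\mathbf{1}$ forces each $a_i$ to be a complement in $\mathcal{L}$, with $\bigvee_{j\neq i}a_j$ serving as a complement; in particular the projection $\pi_{a_i}\in\End_{lin}(\mathcal{L})$ exists and satisfies $a_i=\pi_{a_i}(\mathbf{1})$. Note also that $\End_{lin}(\mathcal{L})$ and each $\End_{lin}([\mathbf{0},a_i])$ are trivially closed under complements, since any $\iota_x(\varphi|_x)^{-1}\pi_y$ lies in the full linear-endomorphism monoid.

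For (a)$\Rightarrow$(b), the first step is to promote each $a_i$ to a fully invariant element. Because $a_i$ is a complement, $a_i=\pi_{a_i}(\mathbf{1})$ exhibits $a_i$ as $\mathcal{L}$-generated; Proposition \ref{abendofi} applied to the abelian-endoregular lattice $\mathcal{L}$ then yields $\varphi(a_i)\le a_i$ for every $\varphi\in\End_{lin}(\mathcal{L})$, i.e.\ $a_i$ is fully invariant. With full invariance in hand I would invoke Corollary \ref{compabend} with $\mathfrak{m}=\End_{lin}(\mathcal{L})$ and $\mathfrak{n}=\End_{lin}([\mathbf{0},a_i])$ (the compatibility $\iota_{a_i}\psi\pi_{a_i}\in\mathfrak{m}$ is automatic as a composition of linear morphisms) to conclude that $[\mathbf{0},a_i]$ is abelian-endoregular.

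For (b)$\Rightarrow$(a), Proposition \ref{compendo} immediately gives that $\mathcal{L}$ is endoregular, so the real task is centrality of idempotents. Let $\varepsilon^2=\varepsilon\in\End_{lin}(\mathcal{L})$ and $\varphi\in\End_{lin}(\mathcal{L})$ be arbitrary. Full invariance lets both $\varepsilon$ and $\varphi$ restrict to linear endomorphisms $\varepsilon_i,\varphi_i$ of $[\mathbf{0},a_i]$, and since $[\mathbf{0},a_i]$ is abelian-endoregular, $\varepsilon_i$ is central in $\End_{lin}([\mathbf{0},a_i])$; hence $\varphi\varepsilon(a_i)=\varepsilon\varphi(a_i)\le\varepsilon(a_i)$ for every $i$. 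Using that linear morphisms commute with arbitrary joins (Remark \ref{linmorjoins}),
\[\varphi(\varepsilon(\mathbf{1}))=\varphi\Big(\bigvee_{i\in I}\varepsilon(a_i)\Big)=\bigvee_{i\in I}\varphi\varepsilon(a_i)\le\bigvee_{i\in I}\varepsilon(a_i)=\varepsilon(\mathbf{1}),\]
so $\varepsilon(\mathbf{1})$ is fully invariant in $\mathcal{L}$. Consequently $\varphi\varepsilon(a)\le\varepsilon(\mathbf{1})$ for every $a\in\mathcal{L}$, which together with $\mathbf{0}=\varepsilon(\mathbf{1})\wedge\ker_\varepsilon$ (Proposition \ref{idemcomp}) forces $\pi_{\ker_\varepsilon}\varphi\varepsilon=\mathbf{0}$, and Lemma \ref{pife0central} delivers centrality of $\varepsilon$.

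The main obstacle is precisely this (b)$\Rightarrow$(a) direction: piecewise centrality on the intervals $[\mathbf{0},a_i]$ does not directly yield $\varepsilon\varphi=\varphi\varepsilon$ on all of $\mathcal{L}$, because a general $x\in\mathcal{L}$ need not decompose as $\bigvee_i(x\wedge a_i)$. The key idea is to sidestep this by only evaluating at $\mathbf{1}=\bigvee_I a_i$ — where the join-preservation of linear morphisms is available — to establish the single fact that $\varepsilon(\mathbf{1})$ is fully invariant, and then to close the argument through Lemma \ref{pife0central} rather than attempting to verify $\varepsilon\varphi=\varphi\varepsilon$ element by element.
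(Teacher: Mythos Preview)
Your argument is correct. The (a)$\Rightarrow$(b) direction matches the paper's exactly: both invoke Proposition~\ref{abendofi} (noting that each $a_i=\pi_{a_i}(\mathbf{1})$ is $\mathcal{L}$-generated, hence fully invariant) and then Corollary~\ref{compabend}.

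For (b)$\Rightarrow$(a) you take a genuinely different route. The paper bypasses Proposition~\ref{compendo} entirely and instead uses the characterization in Proposition~\ref{ker+img}: for an arbitrary $\varphi\in\End_{lin}(\mathcal{L})$, it restricts to $\varphi_i$ on each $[\mathbf{0},a_i]$, obtains $a_i=(\ker_\varphi\wedge a_i)\vee\varphi(a_i)$ with $(\ker_\varphi\wedge a_i)\wedge\varphi(a_i)=\mathbf{0}$ from Proposition~\ref{ker+img} applied in the interval, and then feeds these decompositions into Lemma~\ref{compdecomp} to conclude that $\ker_\varphi=\bigvee_i(\ker_\varphi\wedge a_i)$ and $\varphi(\mathbf{1})=\bigvee_i\varphi(a_i)$ are complements of one another in $\mathcal{L}$; Proposition~\ref{ker+img} then gives abelian-endoregularity directly. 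Your approach instead first secures endoregularity via Proposition~\ref{compendo}, then establishes centrality of each idempotent $\varepsilon$ by showing $\varepsilon(\mathbf{1})$ is fully invariant (via piecewise centrality on the intervals and join-preservation) and invoking Lemma~\ref{pife0central}. The paper's route is slightly more economical in that it handles endoregularity and abelianness in a single stroke through Proposition~\ref{ker+img}, while yours has the conceptual advantage of isolating exactly where the abelian hypothesis on the pieces is used---namely, to make $\varepsilon(\mathbf{1})$ fully invariant---and your closing remark correctly identifies why the argument must pass through $\mathbf{1}$ rather than a general $x$.
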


\begin{proof}
	(a)$\Rightarrow$(b) It follows from Corollary \ref{compabend} and Proposition \ref{abendofi}.
	
	(b)$\Rightarrow$(a) Given $\varphi:\mathcal{L}\to\mathcal{L}$, we have a linear morphism $\varphi_i=\varphi|:[\mathbf{0},a_i]\to[\mathbf{0},a_i]$ with $\ker_{\varphi_i}=\ker_{\varphi}\wedge a_i$ and $\varphi_i(a_i)=\varphi(a_i)$ for all $i\in I$. By Proposition \ref{ker+img}, $a_i=\ker_{\varphi_i}\vee\varphi_i(a_i)=(\ker_{\varphi}\wedge a_i)\vee\varphi(a_i)$ and $\mathbf{0}=\ker_{\varphi_i}\wedge\varphi_i(a_i)=(\ker_{\varphi}\wedge a_i)\wedge\varphi(a_i)$.
	It follows from Lemma \ref{compdecomp} that $\ker_{\varphi}=\bigvee_{i\in I}\ker_{\varphi}\wedge a_i$ is a complement of $\varphi(\mathbf{1})=\bigvee_{i\in I}\varphi(a_i)$ in $\mathcal{L}$. Thus $\mathbf{1}=\ker_{\varphi}\vee\varphi(\mathbf{1})$ and $\mathbf{0}=\ker_{\varphi}\wedge\varphi(\mathbf{0})$. By Proposition \ref{ker+img}, $\mathcal{L}$ is abelian endoregular.
\end{proof}

\section{Regular quotient monoids of linear endomorphisms}\label{rqm}

Recall that an element $x$ in a bounded lattice $\mathcal{L}$ is \emph{superfluous}, if whenever $x\vee y=\mathbf{1}$ then $y=\mathbf{1}$. Equivalently, $x$ is superfluous in $\mathcal{L}$ if $x$ is essential in $\mathcal{L}^{op}$.

\begin{defn}\label{defkextlif}
	Let $\mathcal{L}$ be a complete lattice and let $\mathfrak{m}$ be a submonoid with zero of $\End_{lin}(\mathcal{L})$. The lattice \begin{itemize}
		\item $\mathcal{L}$ is called \emph{$\mathfrak{m}$-$\mathcal{K}$-extending} if for every $\varphi\in\mathfrak{m}$, there exists $c\in C(\mathcal{L})$ such that $\ker_{\varphi}$ is essential in $[\mathbf{0},c]$.
		
		\item $\mathcal{L}$ is called \emph{$\mathfrak{m}$-$\mathcal{T}$-lifting} if for every $\varphi\in\mathfrak{m}$, there exists $c\in C(\mathcal{L})$ with complement $c'$ such that $c\leq\varphi(\mathbf{1})$ and $\varphi(\mathbf{1})\wedge c'$ superfluous in $[\mathbf{0},c']$.
	\end{itemize}
	If the submonoid we are considering is $\End_{lin}(\mathcal{L})$, we will omit the $\mathfrak{m}$.
\end{defn} 

The notions in Definition \ref{defkextlif} are related to those given in \cite[Definition 4.2]{medina2022mbaer} as the following results show.

\begin{prop}\label{rickex}
	Let $\mathcal{L}$ be a complete modular lattice and let $\mathfrak{m}$ be a submonoid with zero of $\End_{lin}(\mathcal{L})$ containing all the projections. The following conditions are equivalent:
	\begin{enumerate}[label=\emph{(\alph*)}]
		\item $\mathcal{L}$ is $\mathfrak{m}$-Rickart. 
		\item $\mathcal{L}$ is $\mathfrak{m}$-$\mathcal{K}$-extending and $\mathfrak{m}$-$\mathcal{K}$-nonsingular.
	\end{enumerate}
\end{prop}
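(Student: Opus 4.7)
The plan is to adapt the standard module-theoretic equivalence ``Rickart $\iff$ $\mathcal{K}$-extending $+$ $\mathcal{K}$-nonsingular'' to this lattice setting, exploiting the hypothesis that $\mathfrak{m}$ contains all projections so that morphisms of the shape $\varphi\pi_c$ automatically lie in $\mathfrak{m}$.

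For (a)$\Rightarrow$(b), assume $\mathcal{L}$ is $\mathfrak{m}$-Rickart and let $\varphi\in\mathfrak{m}$. Then $\ker_\varphi$ is itself a complement and is trivially essential in $[\mathbf{0},\ker_\varphi]$, witnessing $\mathfrak{m}$-$\mathcal{K}$-extending. If moreover $\ker_\varphi\ess\mathcal{L}$ and $x$ is a complement of $\ker_\varphi$, then $x\wedge\ker_\varphi=\mathbf{0}$ forces $x=\mathbf{0}$, so $\ker_\varphi=\mathbf{1}$ and $\varphi=0$; this is the $\mathfrak{m}$-$\mathcal{K}$-nonsingularity.

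For (b)$\Rightarrow$(a), fix $\varphi\in\mathfrak{m}$ and choose $c\in C(\mathcal{L})$ with complement $c'$ such that $\ker_\varphi\leq c$ and $\ker_\varphi\ess[\mathbf{0},c]$. Because $\mathfrak{m}$ contains all projections, $\gamma:=\varphi\pi_c\in\mathfrak{m}$. A short modular-law computation yields $\gamma(c')=\varphi(c'\wedge c)=\mathbf{0}$ and $\gamma(\ker_\varphi)=\varphi((\ker_\varphi\vee c')\wedge c)=\varphi(\ker_\varphi)=\mathbf{0}$, so $\ker_\varphi\vee c'\leq\ker_{\gamma}$. Granted the essentiality of $\ker_\varphi\vee c'$ in $\mathcal{L}$ (handled below), $\ker_{\gamma}$ is also essential in $\mathcal{L}$, and $\mathfrak{m}$-$\mathcal{K}$-nonsingularity forces $\gamma=0$. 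In particular $\varphi(c)=\gamma(\mathbf{1})=\mathbf{0}$, which combined with $\ker_\varphi\leq c$ gives $\ker_\varphi=c\in C(\mathcal{L})$, proving $\mathcal{L}$ is $\mathfrak{m}$-Rickart.

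The main obstacle is the preliminary modular-lattice fact that if $k$ is essential in $[\mathbf{0},c]$ and $c$ has complement $c'$ in $\mathcal{L}$, then $k\vee c'$ is essential in $\mathcal{L}$. I would argue by contradiction: suppose $a>\mathbf{0}$ with $a\wedge(k\vee c')=\mathbf{0}$. By modularity $(a\vee c')\wedge(k\vee c')=c'\vee(a\wedge(k\vee c'))=c'$; meeting further with $c$ shows $(a\vee c')\wedge c$ is an element of $[\mathbf{0},c]$ disjoint from $k$, hence equal to $\mathbf{0}$ by essentiality of $k$ in $[\mathbf{0},c]$. Consequently $a\vee c'$ is a complement of $c$ containing $c'$; the standard fact that two comparable complements of the same element in a modular lattice must be equal gives $a\vee c'=c'$, i.e.\ $a\leq c'$, and combined with $a\wedge c'=\mathbf{0}$ this forces $a=\mathbf{0}$, contradiction.
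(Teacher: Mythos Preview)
Your proof is correct and follows essentially the same route as the paper: for (b)$\Rightarrow$(a) both arguments form $\varphi\pi_c\in\mathfrak{m}$, observe that its kernel contains $\ker_\varphi\vee c'$, and use essentiality of the latter together with $\mathfrak{m}$-$\mathcal{K}$-nonsingularity to force $\varphi\pi_c=0$ and hence $c=\ker_\varphi$. The only difference is that you spell out the (standard) modular-lattice fact that $\ker_\varphi\vee c'$ is essential in $\mathcal{L}$ and give a direct argument for nonsingularity, whereas the paper simply asserts the former and cites \cite[Lemma~4.14]{medina2022mbaer} for the latter.
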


\begin{proof}
	(a)$\Rightarrow$(b). It is clear that every $\mathfrak{m}$-Rickart lattice is $\mathfrak{m}$-$\mathcal{K}$-extending and it is $\mathfrak{m}$-$\mathcal{K}$-nonsingular by \cite[Lemma 4.14]{medina2022mbaer}.
	
	(b)$\Rightarrow$(a). Let $\varphi\in\mathfrak{m}$. By hypothesis, there exists $c\in C(\mathcal{L})$ such that $\ker_{\varphi}$ is essential in $[\mathbf{0},c]$. Let $c'$ be a complement of $c$ and $\psi=\varphi\pi_c\in\mathfrak{m}$. Then $\ker_{\psi}=\ker_\varphi\vee c'$ which is essential in $\mathcal{L}$. This implies that $\psi=0$ and hence $c\leq ker_\varphi$, hence  $c=\ker_{\varphi}$. That is, $\mathcal{L}$ is $\mathfrak{m}$-Rickart.
\end{proof} 

\begin{prop}\label{drictlif}
	Let $\mathcal{L}$ be a complete modular lattice and let $\mathfrak{m}$ be a submonoid with zero of $\End_{lin}(\mathcal{L})$ containing all the projections. The following conditions are equivalent:
	\begin{enumerate}[label=\emph{(\alph*)}]
		\item $\mathcal{L}$ is dual-$\mathfrak{m}$-Rickart.
		\item $\mathcal{L}$ is $\mathfrak{m}$-$\mathcal{T}$-lifting and $\mathfrak{m}$-$\mathcal{T}$-nonsingular.
	\end{enumerate}
\end{prop}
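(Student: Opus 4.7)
The plan is to mirror the argument of Proposition \ref{rickex}, dualizing systematically by exchanging the roles of $\ker_\varphi$ and $\varphi(\mathbf{1})$, essentiality with superfluous, and the projection $\pi_c$ (used on the source side) with a projection $\pi_{c'}$ applied after $\varphi$ on the target side. The $\mathfrak{m}$-$\mathcal{T}$-nonsingular condition should play the role dual to $\mathfrak{m}$-$\mathcal{K}$-nonsingular from \cite[Lemma 4.14]{medina2022mbaer}; namely, I expect that it says any $\psi\in\mathfrak{m}$ whose image $\psi(\mathbf{1})$ is superfluous in $\mathcal{L}$ must be zero.

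For (a)$\Rightarrow$(b): given $\varphi\in\mathfrak{m}$ with $\varphi(\mathbf{1})\in C(\mathcal{L})$, I would take $c:=\varphi(\mathbf{1})$ in Definition \ref{defkextlif}, so that $\varphi(\mathbf{1})\wedge c'=\mathbf{0}$ is trivially superfluous in $[\mathbf{0},c']$, yielding $\mathfrak{m}$-$\mathcal{T}$-lifting. For $\mathfrak{m}$-$\mathcal{T}$-nonsingularity, if $\psi(\mathbf{1})$ is superfluous in $\mathcal{L}$ and $y$ is a complement of $\psi(\mathbf{1})$, then $\psi(\mathbf{1})\vee y=\mathbf{1}$ forces $y=\mathbf{1}$, hence $\psi(\mathbf{1})=\mathbf{0}$, i.e., $\psi=0$. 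This is the direct dual of \cite[Lemma 4.14]{medina2022mbaer}.

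For (b)$\Rightarrow$(a): take any $\varphi\in\mathfrak{m}$, and by $\mathfrak{m}$-$\mathcal{T}$-lifting pick $c\in C(\mathcal{L})$ with complement $c'$ such that $c\leq\varphi(\mathbf{1})$ and $\varphi(\mathbf{1})\wedge c'$ is superfluous in $[\mathbf{0},c']$. Since $\mathfrak{m}$ contains all projections, form $\psi:=\pi_{c'}\varphi\in\mathfrak{m}$, and compute
\[
\psi(\mathbf{1})=(\varphi(\mathbf{1})\vee c)\wedge c'=\varphi(\mathbf{1})\wedge c',
\]
where the last equality uses $c\leq\varphi(\mathbf{1})$. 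Then I need the transfer lemma: a superfluous element $s$ in $[\mathbf{0},c']$, when $c'$ is a complement in $\mathcal{L}$, is also superfluous in $\mathcal{L}$. This follows by modularity: if $s\vee y=\mathbf{1}$ then $c'=s\vee(c'\wedge y)$, so $c'\wedge y=c'$, i.e., $c'\leq y$, and since $s\leq c'\leq y$ we get $y=s\vee y=\mathbf{1}$. So $\psi(\mathbf{1})$ is superfluous in $\mathcal{L}$, whence $\psi=0$ by $\mathfrak{m}$-$\mathcal{T}$-nonsingularity. Thus $\varphi(\mathbf{1})\wedge c'=\mathbf{0}$, and by modularity $\varphi(\mathbf{1})=c\vee(\varphi(\mathbf{1})\wedge c')=c\in C(\mathcal{L})$, proving dual-$\mathfrak{m}$-Rickart.

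The only mildly delicate step is the small-in-complement-implies-small-in-$\mathcal{L}$ lemma, which is the genuine dual of the analogous essential-in-a-complement-is-essential-in-$\mathcal{L}$ fact used implicitly in Proposition \ref{rickex}; everything else is a mechanical dualization. I do not foresee a substantial obstacle.
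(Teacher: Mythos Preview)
Your proposal is correct and follows essentially the same route as the paper's proof: both directions match, with (b)$\Rightarrow$(a) handled by forming $\pi_{c'}\varphi\in\mathfrak{m}$, identifying its image as $\varphi(\mathbf{1})\wedge c'$, and invoking $\mathfrak{m}$-$\mathcal{T}$-nonsingularity to force $\varphi(\mathbf{1})\leq c$. The only difference is that you spell out the ``superfluous in $[\mathbf{0},c']$ implies superfluous in $\mathcal{L}$'' transfer step explicitly, whereas the paper absorbs it into the citation of \cite[Lemma 4.16]{medina2022mbaer}.
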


\begin{proof}
	(a)$\Rightarrow$(b) It is clear that every dual-$\mathfrak{m}$-Rickart lattice is $\mathfrak{m}$-$\mathcal{T}$-lifting and it is $\mathfrak{m}$-$\mathcal{T}$-nonsingular by \cite[Lemma 4.16]{medina2022mbaer}.
	
	(b)$\Rightarrow$(a) Let $\varphi\in\mathfrak{m}$. By hypothesis, there exists $c\in C(\mathcal{L})$ with complement $c'$ such that $c\leq\varphi(\mathbf{1})$ and $\varphi(\mathbf{1})\wedge c'$ superfluous in $[\mathbf{0},c']$. We have that $\pi_{c'}\varphi\in\mathfrak{m}$ and $\pi_{c'}(\varphi(\mathbf{1}))=\varphi(\mathbf{1})\wedge c'$. Since $\mathcal{L}$ is $\mathfrak{m}$-$\mathcal{T}$-nonsingular, $\pi_{c'}\varphi=0$, thus $\varphi(\mathbf{1})\leq c$. Hence   $\mathcal{L}$ is dual-$\mathfrak{m}$-Rickart.
\end{proof}
\begin{defn}
	Let $M$ be an $R$-module.
	\begin{itemize}
		\item $M$ is called \emph{$\mathcal{K}$-extending} if $\Lambda(M)$ is $\mathfrak{E}_M$-$\mathcal{K}$-extending.
		\item $M$ is called \emph{$\mathcal{T}$-lifting} if $\Lambda(M)$ is $\mathfrak{E}_M$-$\mathcal{T}$-lifting.
	\end{itemize}
\end{defn}

\begin{rem}
	It follows that an $R$-module $M$ is $\mathcal{K}$-extending if and only if for every $f\in\End_{R}(M)$ there exists a direct summand $N$ of $M$ such that $\Ker f\ess N$. On the other hand, $M$ is $\mathcal{T}$-lifting if and only if for every $f\in\End_{R}(M)$ there exists a decomposition $M=N\oplus L$ such that $N\leq f(M)$ and $f(M)\cap L<< L$.
\end{rem}

\begin{cor}
	The following conditions are equivalent for an $R$-module $M$:
	\begin{enumerate}[label=\emph{(\alph*)}]
		\item $M$ is Rickart.
		\item $M$ is $\mathcal{K}$-extending and $\mathcal{K}$-nonsingular.
	\end{enumerate}
\end{cor}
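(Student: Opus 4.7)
The plan is to reduce this corollary directly to Proposition \ref{rickex} applied to the lattice $\Lambda(M)$ and the submonoid $\mathfrak{E}_M \subseteq \End_{lin}(\Lambda(M))$. By definition, $M$ is $\mathcal{K}$-extending if and only if $\Lambda(M)$ is $\mathfrak{E}_M$-$\mathcal{K}$-extending, and similarly for $\mathcal{K}$-nonsingularity. Moreover, $M$ is Rickart if and only if $\Lambda(M)$ is $\mathfrak{E}_M$-Rickart: for any $f \in \End_R(M)$, we have $\ker_{f_\ast} = \Ker f$, and complements in $\Lambda(M)$ are exactly the direct summands of $M$, so the Rickart condition on the module side translates verbatim to the lattice side for the morphisms in $\mathfrak{E}_M$.

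Next, I would verify the hypothesis of Proposition \ref{rickex}: that $\mathfrak{E}_M$ contains all the projections. Given a direct summand $N$ of $M$ with complement $N'$, the $R$-linear projection $p : M \to M$ along $N'$ onto $N$ induces $p_\ast \in \mathfrak{E}_M$; a quick check shows $p_\ast(L) = (L \vee N') \wedge N = \pi_N(L)$ for every $L \leq M$, so $\pi_N = p_\ast \in \mathfrak{E}_M$. This confirms that Proposition \ref{rickex} applies to $(\Lambda(M), \mathfrak{E}_M)$.

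With these translations in place, the equivalence (a)$\Leftrightarrow$(b) is immediate: (a) says $\Lambda(M)$ is $\mathfrak{E}_M$-Rickart, which by Proposition \ref{rickex} is equivalent to $\Lambda(M)$ being $\mathfrak{E}_M$-$\mathcal{K}$-extending and $\mathfrak{E}_M$-$\mathcal{K}$-nonsingular, which is precisely (b).

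There is no real obstacle here since this corollary is a straightforward specialization from lattices to modules; the only routine bookkeeping is checking the dictionary $\mathfrak{E}_M \leftrightarrow \End_R(M)$ on the relevant notions (kernels, direct summands, projections, essentiality). Thus the proof would consist of a one-line invocation of Proposition \ref{rickex} preceded by a brief remark that $\mathfrak{E}_M$ contains all the projections of $\Lambda(M)$.
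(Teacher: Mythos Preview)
Your proposal is correct and matches the paper's approach: the paper states this corollary with no proof, treating it as an immediate specialization of Proposition~\ref{rickex} to $\Lambda(M)$ with $\mathfrak{m}=\mathfrak{E}_M$. Your verification that $\mathfrak{E}_M$ contains all projections is fine, though you could shorten it by invoking the earlier result that $\mathfrak{E}_M$ is closed under complements together with Remark~\ref{cucproj}.
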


\begin{cor}
	The following conditions are equivalent for an $R$-module $M$:
	\begin{enumerate}[label=\emph{(\alph*)}]
		\item $M$ is dual-Rickart.
		\item $M$ is $\mathcal{T}$-lifting and $\mathcal{T}$-nonsingular.
	\end{enumerate}
\end{cor}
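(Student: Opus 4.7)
The plan is to derive this corollary directly from Proposition \ref{drictlif} by specializing to $\mathcal{L}=\Lambda(M)$ and $\mathfrak{m}=\mathfrak{E}_M$. All the definitions in question have been arranged precisely so that they translate between the module and lattice settings, so the work is essentially a translation plus a verification that Proposition \ref{drictlif} is applicable.

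First I would verify the hypothesis of Proposition \ref{drictlif}, namely that $\mathfrak{E}_M$ contains all the projections of $\Lambda(M)$. If $N\in C(\Lambda(M))$ with complement $N'$, then $M=N\oplus N'$ gives an $R$-linear projection $p\colon M\to M$ with image $N$ and kernel $N'$; its induced linear endomorphism $p_\ast\in\mathfrak{E}_M$ satisfies $p_\ast(L)=(L+N')\cap N=\pi_N(L)$ for every $L\leq M$ by modularity, so $\pi_N=p_\ast\in\mathfrak{E}_M$.

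Next I would translate the three relevant notions. By definition $M$ is dual-Rickart iff $\Img f$ is a direct summand for every $f\in\End_R(M)$, which is the same as saying $f_\ast(\mathbf{1})\in C(\Lambda(M))$ for every $f_\ast\in\mathfrak{E}_M$, i.e.\ $\Lambda(M)$ is dual-$\mathfrak{E}_M$-Rickart. By the definition given just before the corollary, $M$ is $\mathcal{T}$-lifting iff $\Lambda(M)$ is $\mathfrak{E}_M$-$\mathcal{T}$-lifting. Finally, $\mathcal{T}$-nonsingularity of $M$ is defined (as in \cite{medina2022mbaer}) so that it corresponds to $\mathfrak{E}_M$-$\mathcal{T}$-nonsingularity of $\Lambda(M)$.

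With these translations in place, Proposition \ref{drictlif} applied to $(\Lambda(M),\mathfrak{E}_M)$ gives the equivalence (a)$\Leftrightarrow$(b) directly. The only potential obstacle is the verification that projections really do lie in $\mathfrak{E}_M$ (so that the hypothesis ``containing all the projections'' is met) and matching the lattice projection with the module projection via $p_\ast$, but as noted above this is a one-line modularity computation. There is no genuine difficulty beyond this routine specialization.
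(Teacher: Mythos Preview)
Your proposal is correct and matches the paper's approach: the corollary is stated without proof precisely because it is the direct specialization of Proposition~\ref{drictlif} to $\mathcal{L}=\Lambda(M)$ and $\mathfrak{m}=\mathfrak{E}_M$, together with the translation of definitions you outline. Your explicit verification that $\mathfrak{E}_M$ contains all projections is fine, though you could also simply invoke that $\mathfrak{E}_M$ is closed under complements (shown earlier in the paper) together with Remark~\ref{cucproj}.
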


\begin{defn}
	Let $\mathcal{L}$ be a complete modular lattice and $\mathfrak{m}$ a submonoid of $\End_{lin}(\mathcal{L})$. We define the following relations on $\mathfrak{m}$:
	\[\varphi\equiv_\Delta\psi\;\Leftrightarrow\;\text{ there exists }x\text{ essential in }\mathcal{L}\;\text{ such that }\varphi(y)=\psi(y)\;\text{ for all }y\leq x.\]
	\[\varphi\equiv^\nabla \psi\;\Leftrightarrow\;\text{ there exists }x\text{ superfluous in }\mathcal{L}\;\text{ such that }\varphi(a)\vee x=\psi(a)\vee x\;\text{ for all }a\in\mathcal{L}.\]
\end{defn}

\begin{lemma}\label{imginvess}
Let $\mathcal{L}$ be a complete modular lattice and $\varphi\in\End_{lin}(\mathcal{L})$. 
\begin{enumerate}
	\item If $x\in\mathcal{L}$ is essential, then $w=\bigvee\{a\in\mathcal{L}\mid \varphi(a)\leq x\}$ is essential in $\mathcal{L}$.
	\item If $x\in\mathcal{L}$ is superfluous, then $\varphi(x)$ is superfluous in $\mathcal{L}$.
\end{enumerate}
\end{lemma}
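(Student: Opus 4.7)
\emph{Proof plan.} The two statements are dual to each other, and both rest on the fact that $\overline{\varphi}:[\ker_{\varphi},\mathbf{1}]\to[\mathbf{0},\varphi(\mathbf{1})]$ is a lattice isomorphism, combined with modularity.

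For (1), I would prove essentiality of $w$ directly: take any $y\in\mathcal{L}$ with $y\neq\mathbf{0}$ and produce a nonzero element below $y\wedge w$. If $\varphi(y)=\mathbf{0}$, then (using that $\overline{\varphi}$ is an isomorphism, so $\varphi(y)=\overline{\varphi}(y\vee\ker_{\varphi})=\mathbf{0}$ forces $y\leq\ker_{\varphi}$) one has $y\leq\ker_{\varphi}\leq w$ and we are done. Otherwise $\varphi(y)\neq\mathbf{0}$, and essentiality of $x$ gives $v:=\varphi(y)\wedge x\neq\mathbf{0}$. Using $\overline{\varphi}^{-1}$, let $u=\overline{\varphi}^{-1}(v)\in[\ker_{\varphi},\mathbf{1}]$; since $\overline{\varphi}(u)=v\leq\varphi(y)=\overline{\varphi}(y\vee\ker_{\varphi})$, we get $u\leq y\vee\ker_{\varphi}$. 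Now the key step is a modular-law computation: because $\ker_{\varphi}\leq u\leq y\vee\ker_{\varphi}$, modularity yields $u=(u\wedge y)\vee\ker_{\varphi}$. Setting $t:=u\wedge y$ gives $\varphi(t)=\varphi(t\vee\ker_{\varphi})=\varphi(u)=v\leq x$, so $t\leq w$; also $t\leq y$ and $t\neq\mathbf{0}$ (else $v=\varphi(t)=\mathbf{0}$). Hence $\mathbf{0}\neq t\leq y\wedge w$.

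For (2), the dual approach: assume $\varphi(x)\vee y=\mathbf{1}$; I want to show $y=\mathbf{1}$. Meet with $\varphi(\mathbf{1})$ and use modularity (valid since $\varphi(x)\leq\varphi(\mathbf{1})$) to obtain
\[\varphi(\mathbf{1})=\varphi(\mathbf{1})\wedge(\varphi(x)\vee y)=\varphi(x)\vee(\varphi(\mathbf{1})\wedge y).\]
Let $z:=\overline{\varphi}^{-1}(\varphi(\mathbf{1})\wedge y)\in[\ker_{\varphi},\mathbf{1}]$ and note $\overline{\varphi}^{-1}(\varphi(x))=x\vee\ker_{\varphi}$. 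Applying the lattice isomorphism $\overline{\varphi}^{-1}$ to the displayed equality gives $\mathbf{1}=(x\vee\ker_{\varphi})\vee z=x\vee(\ker_{\varphi}\vee z)$. Superfluousness of $x$ then forces $\ker_{\varphi}\vee z=\mathbf{1}$; but $\ker_{\varphi}\leq z$, so $z=\mathbf{1}$, hence $\varphi(\mathbf{1})\wedge y=\varphi(z)=\varphi(\mathbf{1})$, i.e.\ $\varphi(\mathbf{1})\leq y$. Combined with $\varphi(x)\vee y=\mathbf{1}$, this gives $y=\mathbf{1}$.

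The only delicate step in either part is the careful bookkeeping between the intervals $[\mathbf{0},\varphi(\mathbf{1})]$ and $[\ker_{\varphi},\mathbf{1}]$ under $\overline{\varphi}$, and the application of the modular law at the right moment (to split $u$ as $(u\wedge y)\vee\ker_{\varphi}$ in (1), and to split $\varphi(\mathbf{1})$ as $\varphi(x)\vee(\varphi(\mathbf{1})\wedge y)$ in (2)). Everything else is immediate from Definition \ref{linmor} and Remark \ref{linmorjoins}.
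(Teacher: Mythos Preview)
Your proof is correct. For part~(2) it is essentially identical to the paper's argument: both meet $\varphi(x)\vee y=\mathbf{1}$ with $\varphi(\mathbf{1})$ via modularity to obtain $\varphi(\mathbf{1})=\varphi(x)\vee(\varphi(\mathbf{1})\wedge y)$, pull back through $\overline{\varphi}^{-1}$ to invoke the superfluousness of $x$, and conclude $\varphi(\mathbf{1})\leq y$.

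For part~(1) the paper takes a slightly more structural route. Rather than chasing an arbitrary $y\neq\mathbf{0}$, it first identifies $w$ explicitly as $\overline{\varphi}^{-1}(x\wedge\varphi(\mathbf{1}))$, observes that $x\wedge\varphi(\mathbf{1})$ is essential in $[\mathbf{0},\varphi(\mathbf{1})]$, transfers this through the isomorphism to conclude that $w$ is essential in $[\ker_{\varphi},\mathbf{1}]$, and finally uses modularity (with $\ker_{\varphi}\leq w$) to promote this to essentiality in all of $\mathcal{L}$. Your direct element-chasing argument sidesteps the closed-form description of $w$ and is arguably more elementary; the paper's version gains a clean identification $w=\overline{\varphi}^{-1}(x\wedge\varphi(\mathbf{1}))$, which makes the passage between the intervals transparent. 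The underlying ideas (essentiality of $x$, the isomorphism $\overline{\varphi}$, and a modular-law step involving $\ker_{\varphi}$) are the same in both.
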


\begin{proof}
    \textit{1.} Let $w=\bigvee\{a\in\mathcal{L}\mid \varphi(a)\leq x\}$. Note that $\ker_\varphi\leq w$ and $\varphi(w)\leq x\wedge\varphi(\mathbf{1})$. On the other hand, $\overline{\varphi}^{-1}(x\wedge\varphi(\mathbf{1}))\leq w$. This implies that $\overline{\varphi}^{-1}(x\wedge\varphi(\mathbf{1}))=w$. Since $x$ is essential in $\mathcal{L}$, $x\wedge\varphi(\mathbf{1})$ is essential in $[\mathbf{0},\varphi(\mathbf{1})]$. This implies that $w$ is essential in $[\ker_\varphi,\mathbf{1}]$. Let $y\in\mathcal{L}$ such that $w\wedge y=\mathbf{0}$. Then $w\wedge(y\vee\ker_\varphi)=\ker_\varphi$. Hence $y\vee\ker_\varphi=\ker_\varphi$, that is, $y\leq\ker_\varphi$. Therefore, $y\leq w$ and so $y=\mathbf{0}$. Thus, $w$ is essential in $\mathcal{L}$.
    
    \textit{2}. Since $x$ is superfluous in $\mathcal{L}$, $x\vee\ker_{\varphi}$ is superfluous in $[\ker_{\varphi},\mathbf{1}]$. This implies that $\varphi(x)$ is superfluous in $[\mathbf{0},\varphi(\mathbf{1})]$. Let $y\in\mathcal{L}$ such that $\varphi(x)\vee y=\mathbf{1}$. Then $\varphi(x)\vee(\varphi(\mathbf{1})\wedge y)=\varphi(\mathbf{1})\wedge(\varphi(x)\vee y)=\varphi(\mathbf{1})$. This implies that $\varphi(\mathbf{1})\wedge y=\varphi(\mathbf{1})$, that is, $\varphi(\mathbf{1})\leq y$. Therefore, $\mathbf{1}=\varphi(x)\vee y=y$. Thus $\varphi(x)$ is superfluous in $\mathcal{L}$.
\end{proof}

\begin{lemma}\label{congru}
	Let $\mathcal{L}$ be a complete modular lattice. The relations $\equiv_\Delta$ and $\equiv^\nabla$ are congruences on any submonoid $\mathfrak{m}\subseteq\End_{lin}(\mathcal{L})$.
\end{lemma}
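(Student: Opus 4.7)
The plan is to verify that each of $\equiv_\Delta$ and $\equiv^\nabla$ is a monoid congruence on $\mathfrak{m}$, i.e., an equivalence relation compatible with composition on both sides. The two cases are formally dual, and Lemma \ref{imginvess} provides exactly the technical inputs needed for the non-routine compatibility checks.

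First I would establish that both relations are equivalence relations. Reflexivity is witnessed by $x=\mathbf{1}$ for $\equiv_\Delta$ (which is trivially essential) and by $x=\mathbf{0}$ for $\equiv^\nabla$ (which is trivially superfluous); symmetry is immediate from the symmetric form of both definitions. For transitivity I would use the standard facts that a meet of two essential elements of $\mathcal{L}$ is essential and a join of two superfluous elements is superfluous (the former: if $(x_1\wedge x_2)\wedge y=\mathbf{0}$, then $x_2$ essential gives $y\wedge x_1=\mathbf{0}$, so $y=\mathbf{0}$; the latter is dual). Hence if $\varphi\equiv_\Delta\psi$ via $x_1$ and $\psi\equiv_\Delta\tau$ via $x_2$, then $\varphi$ and $\tau$ agree on $[\mathbf{0},x_1\wedge x_2]$, and dually for $\equiv^\nabla$ one uses $x_1\vee x_2$.

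Next I would verify compatibility with composition, doing it separately on the left and on the right. For $\equiv_\Delta$ the easy direction is the left one: if $\varphi\equiv_\Delta\psi$ via essential $x$, then applying $\alpha$ to $\varphi(y)=\psi(y)$ for $y\leq x$ yields $\alpha\varphi\equiv_\Delta\alpha\psi$ via the same $x$. The right side is where Lemma \ref{imginvess}(1) enters: given $\beta\in\mathfrak{m}$, the element $w=\bigvee\{a\in\mathcal{L}\mid \beta(a)\leq x\}$ is essential, and because $\beta$ commutes with arbitrary joins (Remark \ref{linmorjoins}) we get $\beta(w)\leq x$; hence every $y\leq w$ satisfies $\beta(y)\leq\beta(w)\leq x$ by monotonicity, so $\varphi\beta(y)=\psi\beta(y)$ and $\varphi\beta\equiv_\Delta\psi\beta$ via $w$.

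For $\equiv^\nabla$ the roles are dual. Right compatibility is the easy side: substituting $\beta(a)$ for $a$ in $\varphi(a)\vee x=\psi(a)\vee x$ gives $\varphi\beta\equiv^\nabla\psi\beta$ via the same $x$. Left compatibility is the interesting one: applying $\alpha$ to both sides of $\varphi(a)\vee x=\psi(a)\vee x$ and using that $\alpha$ preserves joins yields $\alpha\varphi(a)\vee\alpha(x)=\alpha\psi(a)\vee\alpha(x)$, and the new witnessing element $\alpha(x)$ is superfluous by Lemma \ref{imginvess}(2), so $\alpha\varphi\equiv^\nabla\alpha\psi$. The main obstacle is purely bookkeeping: one must transport the witness essential/superfluous element through the composition on the appropriate side, and Lemma \ref{imginvess} is tailored precisely to do this (pulling essentials back along $\beta$ and pushing superfluous elements forward along $\alpha$); once those two transport properties are in hand, the verification of the monoid-congruence axioms is entirely mechanical.
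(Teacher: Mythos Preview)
Your proposal is correct and follows essentially the same approach as the paper's proof: both verify the equivalence-relation axioms via $\mathbf{1}$, $\mathbf{0}$, and $x_1\wedge x_2$ (resp.\ $x_1\vee x_2$), handle the ``easy'' side of compatibility directly, and invoke Lemma~\ref{imginvess} for the ``hard'' side (pulling back essentials along $\beta$ and pushing forward superfluous elements along $\alpha$). Your added remarks on Remark~\ref{linmorjoins} and the essentiality of meets are welcome clarifications but do not alter the argument.
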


\begin{proof}
	$(\equiv_\Delta)$. Let $\varphi,\psi,\sigma\in\mathfrak{m}$. We have that $\varphi\equiv_\Delta\varphi$ because $\varphi(y)=\varphi(y)$ for all $y\leq\mathbf{1}$ and $\mathbf{1}$ is essential in $\mathcal{L}$. It is clear that $\equiv_\Delta$ is symmetric. Now, suppose that $\varphi\equiv_\Delta\psi$ and $\psi\equiv_\Delta\sigma$. Then there exist $x$ and $w$ essential in $\mathcal{L}$ such that $\varphi(y)=\psi(y)$ for all $y\leq x$ and $\psi(v)=\sigma(v)$ for all $v\leq w$. Since $x$ and $w$ are essential in $\mathcal{L}$, so is $x\wedge w$. Let $z\leq x\wedge w$. Then $z\leq x$ and $z\leq w$. Therefore, $\varphi(z)=\psi(z)=\sigma(z)$. Thus $\varphi\equiv_\Delta\sigma$. This proves that $\equiv_\Delta$ is an equivalence relation.
	
	Now, suppose that $\varphi\equiv_\Delta\psi$. Then there exists $x$ essential in $\mathcal{L}$ such that $\varphi(y)=\psi(y)$ for all $y\leq x$. Let $y\leq x$, then $\sigma\varphi(y)=\sigma\psi(y)$. Therefore $\sigma\varphi\equiv_\Delta\sigma\psi$. Let $w=\bigvee\{a\in\mathcal{L}\mid \sigma(a)\leq x\}$. By Lemma \ref{imginvess}, $w$ is essential in $\mathcal{L}$. Note that $\sigma(w)\leq x$. Let $v\leq w$. Then $\sigma(v)\leq\sigma(w)\leq x$. Hence $\varphi(\sigma(v))=\psi(\sigma(v))$. Thus $\varphi\sigma\equiv_\Delta\psi\sigma$.
	
	$(\equiv^\nabla)$. Let $\varphi,\psi,\sigma\in\mathfrak{m}$. We have that $\varphi\equiv^\nabla\varphi$ because $\varphi(a)\vee\mathbf{0}=\varphi(a)\vee\mathbf{0}$ for all $a\in\mathcal{L}$. It is clear that $\equiv^\nabla$ is symmetric. Now, suppose that $\varphi\equiv^\nabla\psi$ and $\psi\equiv^\nabla\sigma$. Then there exist $x$ and $y$ superfluous in $\mathcal{L}$ such that $\varphi(a)\vee x=\psi(a)\vee x$ and $\psi(a)\vee y=\sigma(a)\vee y$ for all $a\in\mathcal{L}$. Hence $\varphi(a)\vee x\vee y=\sigma(a)\vee x\vee y$ for all $a\in\mathcal{L}$ and $x\vee y$ is superfluous in $\mathcal{L}$. Thus $\varphi\equiv^\nabla\sigma$. This proves that $\equiv^\nabla$ is an equivalence relation. 
	
	Now, suppose that $\varphi\equiv^\nabla\psi$. Then there exists $x$ superfluous in $\mathcal{L}$ such that $\varphi(a)\vee x=\psi(a)\vee x$ for all $a\in \mathcal{L}$. It follows that $\varphi(\sigma(a))\vee x=\psi(\sigma(a))\vee x$ for all $a\in\mathcal{L}$. Thus $\varphi\sigma\equiv^\nabla\psi\sigma$. On the other hand,
	\[\sigma(\varphi(a))\vee\sigma(x)=\sigma(\varphi(a)\vee x)=\sigma(\psi(a)\vee x)=\sigma(\psi(a))\vee\sigma(x)\]
	for all $a\in\mathcal{L}$,  and $\sigma(x)$ is superfluous in $\mathcal{L}$ by Lemma \ref{imginvess}. Thus $\sigma\varphi\equiv^\nabla\sigma\psi$.
\end{proof}

\begin{lemma}
	Let $\mathcal{L}$ be a complete modular lattice. Then,
	\begin{enumerate}
		\item $0\equiv_\Delta\varphi$ if and only if $\ker_{\varphi}$ is essential in $\mathcal{L}$.
		\item $0\equiv^\nabla\varphi$ if and only if $\varphi(\mathbf{1})$ is superfluous in $\mathcal{L}$.
	\end{enumerate}
\end{lemma}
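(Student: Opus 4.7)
The plan is to unwind the two definitions of $\equiv_\Delta$ and $\equiv^\nabla$ and exploit the isomorphism $\overline{\varphi}:[\ker_\varphi,\mathbf{1}]\to[\mathbf{0},\varphi(\mathbf{1})]$ from Definition \ref{linmor}, together with the elementary facts that every element below a superfluous element is superfluous and every element above an essential element is essential.

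For part (1), I would first observe that $0\equiv_\Delta\varphi$ translates to the existence of an essential $x$ such that $\varphi(y)=\mathbf{0}$ for every $y\leq x$. The key step is to show this is equivalent to $x\leq\ker_\varphi$: taking $y=x$ gives $\varphi(x)=\mathbf{0}$, and since $\varphi(x)=\overline{\varphi}(x\vee\ker_\varphi)$ with $\overline{\varphi}$ an isomorphism of intervals, this forces $x\vee\ker_\varphi=\ker_\varphi$, i.e.\ $x\leq\ker_\varphi$. Hence $\ker_\varphi$ contains an essential element and is itself essential. For the converse I would simply take $x=\ker_\varphi$ and verify that any $y\leq\ker_\varphi$ satisfies $\varphi(y)=\varphi(y\vee\ker_\varphi)=\varphi(\ker_\varphi)=\mathbf{0}$.

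For part (2), unwinding $0\equiv^\nabla\varphi$ yields a superfluous $x$ with $\varphi(a)\vee x=x$ for all $a\in\mathcal{L}$, equivalently $\varphi(a)\leq x$ for all $a$. Specialising to $a=\mathbf{1}$ gives $\varphi(\mathbf{1})\leq x$, and since any element below a superfluous element is itself superfluous, $\varphi(\mathbf{1})$ is superfluous. The converse is symmetric: I would take $x=\varphi(\mathbf{1})$, and then use the containment $\varphi(a)\leq\varphi(\mathbf{1})$ (which holds because $\overline{\varphi}$ takes values in $[\mathbf{0},\varphi(\mathbf{1})]$) to conclude $\varphi(a)\vee x=x=0(a)\vee x$ for every $a$.

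I do not foresee any real obstacle; both halves are short calculations inside the kernel/image decomposition supplied by Definition \ref{linmor}. The only subtlety worth recording is that the quantified clauses ``$\varphi(y)=\mathbf{0}$ for all $y\leq x$'' and ``$\varphi(a)\vee x=x$ for all $a\in\mathcal{L}$'' collapse respectively to the single containments $x\leq\ker_\varphi$ and $\varphi(\mathbf{1})\leq x$, which is precisely where the isomorphism property of $\overline{\varphi}$ enters.
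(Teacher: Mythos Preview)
Your proposal is correct and follows essentially the same approach as the paper's proof: both argue that the defining condition collapses to the single containment $x\leq\ker_\varphi$ (respectively $\varphi(\mathbf{1})\leq x$), and then use that essentiality passes upward and superfluousness downward. Your version is slightly more explicit in invoking the isomorphism $\overline{\varphi}$ to justify $\varphi(x)=\mathbf{0}\Rightarrow x\leq\ker_\varphi$, whereas the paper simply asserts this step.
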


\begin{proof}
	\textit{1.} $\Rightarrow$ Let $\varphi\in\End_{lin}(\mathcal{L})$ such that $0\equiv_\Delta \varphi$. Then, there exists $x$ essential in $\mathcal{L}$ such that $\mathbf{0}=0(y)=\varphi(y)$ for all $y\leq x$. This implies that $x\leq\ker_{\varphi}$ and, therefore $\ker_{\varphi}$ is essential in $\mathcal{L}$.
	
	$\Leftarrow$ It is clear that if $\ker_{\varphi}$ is essential in $\mathcal{L}$, then $0\equiv_\Delta\varphi$.
	
	\textit{2}. $\Rightarrow$ Let $\varphi\in\End_{lin}(\mathcal{L})$ such that $0\equiv^\nabla \varphi$. Then, there exists $x$ superfluous in $\mathcal{L}$ such that $\varphi(a)\vee x=0(a)\vee x=\mathbf{0}\vee x=x$ for all $a\in \mathcal{L}$. Hence $\varphi(\mathbf{1})\leq x$. Thus, $\varphi(\mathbf{1})$ is superfluous in $\mathcal{L}$.
	
	$\Leftarrow$ We have that $\varphi(a)\vee\varphi(\mathbf{1})=0(a)\vee\varphi(\mathbf{1})$ for all $a\in\mathcal{L}$ and $\varphi(\mathbf{1})$ superfluous in $\mathcal{L}$. Thus $\varphi\equiv^\nabla 0$.
\end{proof}

Let $\mathcal{L}$ be a complete modular lattice and let $[\varphi]_\Delta$ and $[\varphi]^\nabla$ denote the equivalence classes of $\varphi\in\End_{lin}(\mathcal{L})$ respect to $\equiv_\Delta$ and to $\equiv^\nabla$,  respectively. Let $\Delta$ denote  
\[[0]_\Delta=\{\varphi\in\End_{lin}(\mathcal{L})\mid 0\equiv_\Delta \varphi\}=\{\varphi\in\End_{lin}(\mathcal{L})\mid \ker_{\varphi}\;\text{is essential in }\mathcal{L}\},\]
and let $\nabla$ denote
\[[0]^\nabla=\{\varphi\in\End_{lin}(\mathcal{L})\mid 0\equiv^\nabla \varphi\}=\{\varphi\in\End_{lin}(\mathcal{L})\mid \varphi(\mathbf{1})\text{ is superfluous in }\mathcal{L}\}.\] 
The proof of the following lemma is straightforward and we omit the proof.

\begin{lemma}
	Let $\mathcal{L}$ be a complete modular lattice. Then,
	\begin{enumerate}
		\item $\Delta$ and $\nabla$ are ideals  of $\End_{lin}(\mathcal{L})$.
		\item $\Delta$ and $\nabla$ contain no nonzero idempotents.
	\end{enumerate}
\end{lemma}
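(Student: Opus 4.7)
The plan is to verify each claim directly using Proposition \ref{idemcomp}, Corollary \ref{idemespi}, and especially Lemma \ref{imginvess}, which was set up precisely for this purpose. Since ideals in a monoid with zero require only two-sided multiplicative absorption plus containment of $0$, the work is entirely routine.

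For part (1), I first check $\Delta$ is an ideal. Clearly $0 \in \Delta$ since $\ker_0 = \mathbf{1}$ is essential. For any $\varphi \in \Delta$ and $\sigma \in \End_{lin}(\mathcal{L})$: on the left, $\ker_\varphi \leq \ker_{\sigma\varphi}$ (because $\varphi$ kills $\ker_\varphi$, so $\sigma\varphi$ does too), and essentiality is preserved upward, so $\sigma\varphi \in \Delta$. On the right, $\ker_{\varphi\sigma}$ coincides with $w = \bigvee\{a \in \mathcal{L} \mid \sigma(a) \leq \ker_\varphi\}$, which is essential by Lemma \ref{imginvess}(1); hence $\varphi\sigma \in \Delta$. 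For $\nabla$, note $0(\mathbf{1}) = \mathbf{0}$ is trivially superfluous. Given $\varphi \in \nabla$: on the left, $(\sigma\varphi)(\mathbf{1}) = \sigma(\varphi(\mathbf{1}))$ is superfluous by Lemma \ref{imginvess}(2); on the right, $(\varphi\sigma)(\mathbf{1}) \leq \varphi(\mathbf{1})$, and being below a superfluous element keeps it superfluous (since if $(\varphi\sigma)(\mathbf{1}) \vee y = \mathbf{1}$ then $\varphi(\mathbf{1}) \vee y = \mathbf{1}$, forcing $y = \mathbf{1}$).

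For part (2), suppose $\varepsilon = \varepsilon^2 \in \Delta$. By Proposition \ref{idemcomp}, $\ker_\varepsilon \wedge \varepsilon(\mathbf{1}) = \mathbf{0}$; since $\ker_\varepsilon$ is essential, this forces $\varepsilon(\mathbf{1}) = \mathbf{0}$, and then $\varepsilon(a) \leq \varepsilon(\mathbf{1}) = \mathbf{0}$ for every $a$, i.e.\ $\varepsilon = 0$. Symmetrically, if $\varepsilon = \varepsilon^2 \in \nabla$, then $\mathbf{1} = \ker_\varepsilon \vee \varepsilon(\mathbf{1})$ with $\varepsilon(\mathbf{1})$ superfluous, so $\ker_\varepsilon = \mathbf{1}$; combined with $\ker_\varepsilon \wedge \varepsilon(\mathbf{1}) = \mathbf{0}$ this yields $\varepsilon(\mathbf{1}) = \mathbf{0}$ and again $\varepsilon = 0$. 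The only point that deserves care is the identification of $\ker_{\varphi\sigma}$ as the supremum of preimages in part (1); everything else is a direct application of the preceding lemmas.
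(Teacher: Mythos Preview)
Your proof is correct. The paper actually omits the proof entirely, stating only that it is straightforward; your argument supplies exactly the routine verification one would expect, invoking Lemma~\ref{imginvess} and Proposition~\ref{idemcomp} in the natural places. One minor remark: for the right absorption in $\Delta$ you do not strictly need the equality $\ker_{\varphi\sigma}=w$, only the inclusion $w\leq\ker_{\varphi\sigma}$ (which follows since $\sigma$ preserves joins), together with upward inheritance of essentiality---but the equality you assert is correct as well.
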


\begin{prop}\label{equivmod}
	Let $M$ be an $R$-module and $f,g \in\End_{R}(M)$. Then,
	\begin{enumerate}
		\item If $\Ker(f-g)$ is essential in $M$ then $f_\ast\equiv_\Delta g_\ast$.
		\item If $\Img(f-g)$ is superfluous in $M$ then $f_\ast\equiv^\nabla g_\ast$.
	\end{enumerate}
\end{prop}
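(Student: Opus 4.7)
The plan is to take the submodules $\Ker(f-g)$ and $\Img(f-g)$ themselves as the witnesses for the two congruences, since both hypotheses are essentially already stated in lattice-theoretic language once we remember that essential (resp. superfluous) submodules of $M$ are exactly the essential (resp. superfluous) elements of $\Lambda(M)$. So the proof amounts to translating the elementary identity $f(n)=g(n)+(f-g)(n)$ into statements about images.

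For part (1), I would take $x=\Ker(f-g)\in\Lambda(M)$. By hypothesis, $x$ is essential in $\mathcal{L}=\Lambda(M)$. For any $y\leq x$ in $\Lambda(M)$, i.e.\ any submodule $N\subseteq\Ker(f-g)$, we have $(f-g)(N)=0$, hence $f(N)=g(N)$, which is precisely $f_\ast(y)=g_\ast(y)$. This matches the defining condition of $\equiv_\Delta$, so $f_\ast\equiv_\Delta g_\ast$.

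For part (2), I would take $x=\Img(f-g)\in\Lambda(M)$, which is superfluous in $\mathcal{L}$ by hypothesis. For any $a\in\Lambda(M)$, i.e.\ any submodule $N\leq M$, and any $n\in N$ we have $f(n)=g(n)+(f-g)(n)\in g(N)+\Img(f-g)$, so $f(N)+\Img(f-g)\subseteq g(N)+\Img(f-g)$; the reverse inclusion follows by swapping the roles of $f$ and $g$. Translating $+$ into $\vee$ in $\Lambda(M)$ gives $f_\ast(a)\vee x=g_\ast(a)\vee x$ for every $a\in\mathcal{L}$, which is exactly the defining condition of $\equiv^\nabla$.

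There really is no hard step here; the only point one has to be slightly careful about is the identification of module-theoretic essentiality and superfluity with the corresponding lattice-theoretic notions in $\Lambda(M)$, but this is standard and compatible with the notation fixed in the paper. The statement is thus a direct unpacking of the definitions of $\equiv_\Delta$ and $\equiv^\nabla$, using $\Ker(f-g)$ and $\Img(f-g)$ as the required essential and superfluous witnesses.
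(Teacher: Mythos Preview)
Your proof is correct and follows essentially the same approach as the paper: for (1) both take $\Ker(f-g)$ as the essential witness and observe $f(N)=g(N)$ for $N\leq\Ker(f-g)$, and for (2) both take $\Img(f-g)$ as the superfluous witness and verify $f(N)+\Img(f-g)=g(N)+\Img(f-g)$ via the identity $f(n)=g(n)+(f-g)(n)$. Your version of (2) is in fact slightly cleaner, since you note $f(N)\subseteq g(N)+\Img(f-g)$ directly rather than chasing a generic element $f(n)+(f-g)(m)$ as the paper does.
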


\begin{proof}
	\textit{(1)} Let $N\leq \Ker(f-g)$. Then $f(n)=g(n)$ for all $n\in N$. In particular, $f_\ast(N)=f(N)=g(N)=g_\ast(N)$. Thus, $f_\ast\equiv_\Delta g_\ast$.
	
	\textit{(2)} Let $N\leq M$ and $n\in N$. Consider $f(n)+(f-g)(m)\in f(N)+\Img(f-g)$. Then 
	\[f(n)+(f-g)(m)=f(n)+(f-g)(m)+g(n)-g(n)=g(n)+(f-g)(n)+(f-g)(m)\]
	\[=g(n)+(f-g)(n+m)\in g(N)+\Img(f-g).\]
	Hence $f(N)+\Img(f-g)\subseteq g(N)+\Img(f-g)$. Analogously, $g(N)+\Img(f-g)\subseteq f(N)+\Img(f-g)$. Thus $g_\ast(N)+\Img(f-g)=f_\ast(N)+\Img(f-g)$ for all $N\leq M$, that is, $f_\ast\equiv^\nabla g_\ast$.
\end{proof}

\begin{lemma}
	Let $\mathcal{L}$ be a complete modular lattice and $\mathfrak{m}$ a submonoid of $\End_{lin}(\mathcal{L})$. 
	\begin{enumerate}
		\item If $\mathcal{L}$ satisfies the $\mathfrak{m}$-$C_2$ condition, then every monomorphism  $\varphi\in\mathfrak{m}$ such that $\varphi(\mathbf{1})$ is essential in $\mathcal{L}$, is an isomorphism, and $[1]_\Delta$ consists of isomorphisms.
		
		\item If $\mathcal{L}$ satisfies the $\mathfrak{m}$-$D_2$ condition, then every $\varphi\in\mathfrak{m}$ surjective such that $\ker_{\varphi}$ is superfluous in $\mathcal{L}$, is an isomorphism, and $[1]^\nabla$ consists of isomorphisms.
	\end{enumerate}
\end{lemma}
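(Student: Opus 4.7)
The plan is to handle the two parts in dual fashion. In each case I apply the $\mathfrak{m}$-$C_2$ or $\mathfrak{m}$-$D_2$ condition to a canonical composition that collapses to $\varphi$ itself, thereby concluding that $\varphi(\mathbf{1})$ or $\ker_\varphi$ is a complement in $\mathcal{L}$. The essentiality (resp.\ superfluousness) hypothesis then forces this complement to be $\mathbf{1}$ (resp.\ $\mathbf{0}$), yielding an isomorphism. Finally, I translate membership in $[1]_\Delta$ (resp.\ $[1]^\nabla$) back to the hypotheses of the first assertion.

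For part (1), let $\varphi\in\mathfrak{m}$ be a monomorphism (so $\ker_\varphi=\mathbf{0}$) with $\varphi(\mathbf{1})$ essential in $\mathcal{L}$. Then $\overline{\varphi}\colon[\mathbf{0},\mathbf{1}]\to[\mathbf{0},\varphi(\mathbf{1})]$ is a linear isomorphism. Since $\mathbf{1}$ is a complement in $\mathcal{L}$ (with complement $\mathbf{0}$), $\pi_{\mathbf{1}}$ is the identity on $\mathcal{L}$, so $\iota_{\varphi(\mathbf{1})}\overline{\varphi}\pi_{\mathbf{1}}=\varphi\in\mathfrak{m}$. The $\mathfrak{m}$-$C_2$ condition thus yields $\varphi(\mathbf{1})\in C(\mathcal{L})$; being simultaneously essential and a complement forces its complement to be $\mathbf{0}$, i.e.\ $\varphi(\mathbf{1})=\mathbf{1}$, so $\varphi$ is an isomorphism. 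For the $[1]_\Delta$ claim, if $\varphi\equiv_\Delta 1$, choose an essential $x\in\mathcal{L}$ with $\varphi(y)=y$ for every $y\leq x$. Any $y\leq\ker_\varphi\wedge x$ satisfies $y=\varphi(y)=\mathbf{0}$, so $\ker_\varphi\wedge x=\mathbf{0}$ and the essentiality of $x$ forces $\ker_\varphi=\mathbf{0}$; moreover $\varphi(\mathbf{1})\geq\varphi(x)=x$ is essential, so the previous argument applies.

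Part (2) is exactly dual. For a surjective $\varphi\in\mathfrak{m}$ (so $\varphi(\mathbf{1})=\mathbf{1}$) with $\ker_\varphi$ superfluous, $\overline{\varphi}\colon[\ker_\varphi,\mathbf{1}]\to[\mathbf{0},\mathbf{1}]$ is a linear isomorphism; taking $a=\ker_\varphi$ and $x=\mathbf{1}$ one verifies $\iota_{\mathbf{1}}\overline{\varphi}\rho_{\ker_\varphi}(y)=\overline{\varphi}(y\vee\ker_\varphi)=\varphi(y)$, so the composition equals $\varphi\in\mathfrak{m}$. Then $\mathfrak{m}$-$D_2$ yields $\ker_\varphi\in C(\mathcal{L})$, and a complement that is also superfluous must be $\mathbf{0}$, so $\varphi$ is an isomorphism. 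For the $[1]^\nabla$ claim, if $\varphi\equiv^\nabla 1$, pick a superfluous $x$ with $\varphi(a)\vee x=a\vee x$ for all $a\in\mathcal{L}$; setting $a=\mathbf{1}$ gives $\varphi(\mathbf{1})\vee x=\mathbf{1}$, whence $\varphi(\mathbf{1})=\mathbf{1}$, while setting $a=\ker_\varphi$ gives $\ker_\varphi\vee x=x$, so $\ker_\varphi\leq x$ is superfluous; the first half of (2) then concludes.

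The only real subtlety is the observation that $\mathbf{1}$ itself qualifies as the complement $x$ in the $\mathfrak{m}$-$C_2$ hypothesis, and dually that $a=\ker_\varphi$ with $x=\mathbf{1}$ is a valid input to $\mathfrak{m}$-$D_2$; once this is spotted, the compositions collapse to $\varphi$ without further computation, and everything else rests on the elementary facts that an essential complement is $\mathbf{1}$ and a superfluous complement is $\mathbf{0}$. I do not anticipate any genuine obstacle beyond this bookkeeping.
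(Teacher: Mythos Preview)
Your proof is correct and follows essentially the same route as the paper's: in each part you use the $\mathfrak{m}$-$C_2$ (resp.\ $\mathfrak{m}$-$D_2$) condition to conclude that $\varphi(\mathbf{1})$ (resp.\ $\ker_\varphi$) is a complement, and then combine this with essentiality (resp.\ superfluousness) to force it to be $\mathbf{1}$ (resp.\ $\mathbf{0}$); the treatment of $[1]_\Delta$ and $[1]^\nabla$ is likewise identical. The only difference is that you spell out explicitly why the hypotheses of $\mathfrak{m}$-$C_2$ and $\mathfrak{m}$-$D_2$ are satisfied (by exhibiting the composition $\iota_{\varphi(\mathbf{1})}\overline{\varphi}\pi_{\mathbf{1}}=\varphi$ and its dual), whereas the paper invokes the conditions directly.
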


\begin{proof}
	\textit{1.} Let $\varphi\in\mathfrak{m}$ be a monomorphism such that $\varphi(\mathbf{1})$ is essential in $\mathcal{L}$. By $\mathfrak{m}$-$C_2$, $\varphi(\mathbf{1})$ is a complement in $\mathcal{L}$. It follows that $\varphi(\mathbf{1})=\mathbf{1}$. Thus, $\varphi$ is a linear isomorphism. Let $\varphi\in[1]_\Delta$. Then there exists $x$ essential in $\mathcal{L}$, such that $\varphi(y)=y$ for all $y\leq x$. Suppose that $z\in\mathcal{L}$ is such that $\varphi(z)=\mathbf{0}$. Then $z\wedge x=\varphi(x\wedge z)=\mathbf{0}$. This implies that $z=\mathbf{0}$. Therefore, $\varphi$ is a monomorphism. Since $x=\varphi(x)\leq\varphi(\mathbf{1})$, $\varphi(\mathbf{1})$ is essential in $\mathcal{L}$. It follows that $\varphi$ is an isomorphism.
	
	\textit{2.} Let $\varphi\in\mathfrak{m}$ be surjective such that $\ker_\varphi$ is superfluous in $\mathcal{L}$. Then there is an isomorphism $\overline{\varphi}:[\ker_{\varphi},\mathbf{1}]\to[\mathbf{0},\mathbf{1}]$. It follows from the condition $\mathfrak{m}$-$D_2$ that $\ker_\varphi$ is a complement. The hypothesis implies that $\ker_{\varphi}=\mathbf{0}$. Thus $\varphi$ is an isomorphism. Let $\varphi\in[1]^\nabla$. Then there exists $x$ superfluous in $\mathcal{L}$ such that $\varphi(a)\vee x=1(a)\vee x=a\vee x$ for all $a\in \mathcal{L}$. In particular, $\varphi(\mathbf{1})\vee \mathbf{1}=\mathbf{1}\vee x=\mathbf{1}$. Since $x$ is superfluous, $\varphi(\mathbf{1})=\mathbf{1}$, that is, $\varphi$ is surjective. On the other hand $x=\mathbf{0}\vee x=\varphi(\ker_{\varphi})\vee x=\ker_\varphi\vee x$. This implies that $\ker_\varphi\leq x$. Therefore, $\ker_{\varphi}$ is superfluous in $\mathcal{L}$. Thus, $\varphi$ is an isomorphism.
\end{proof}

\begin{thm}\label{thmDelta}
	Let $\mathcal{L}$ be a complete modular lattice and $\mathfrak{m}$ be a submonoid of $\End_{lin}(\mathcal{L})$ closed under complements. If $\mathcal{L}$ is $\mathfrak{m}$-$\mathcal{K}$-extending and satisfies $\mathfrak{m}$-$C_2$, then $\mathfrak{m}/\equiv_\Delta$ is a regular monoid.
\end{thm}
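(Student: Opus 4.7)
The plan is to construct, for each $\varphi\in\mathfrak{m}$, an element $\psi\in\mathfrak{m}$ such that $\varphi\psi\varphi\equiv_\Delta\varphi$; this is exactly what regularity of $\mathfrak{m}/\equiv_\Delta$ demands, since $\equiv_\Delta$ is already known to be a monoid congruence by Lemma \ref{congru}.

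First I would apply the $\mathfrak{m}$-$\mathcal{K}$-extending hypothesis to obtain a complement $c\in C(\mathcal{L})$ such that $\ker_\varphi$ is essential in $[\mathbf{0},c]$, and pick a complement $c'$ of $c$. Since $\ker_\varphi\leq c$, we have $\ker_\varphi\wedge c'=\mathbf{0}$, so the restriction $\varphi\iota_{c'}:[\mathbf{0},c']\to\mathcal{L}$ has trivial kernel and induces a linear isomorphism $\theta:[\mathbf{0},c']\to[\mathbf{0},\varphi(c')]$ with $\theta(z)=\varphi(z)$. A direct computation shows $\iota_{\varphi(c')}\theta\pi_{c'}=\varphi\pi_{c'}$, which lies in $\mathfrak{m}$ because $\mathfrak{m}$ is closed under complements and hence contains $\pi_{c'}$ by Remark \ref{cucproj}. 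Invoking $\mathfrak{m}$-$C_2$ then yields that $\varphi(c')$ is itself a complement in $\mathcal{L}$, and closure under complements produces $\psi:=\iota_{c'}\theta^{-1}\pi_{\varphi(c')}\in\mathfrak{m}$.

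It remains to verify $\varphi\psi\varphi\equiv_\Delta\varphi$. The witness will be $w:=\ker_\varphi\vee c'$. I would show $w$ is essential in $\mathcal{L}$ by noting that under the standard modular isomorphism $[\mathbf{0},c]\cong[c',\mathbf{1}]$ sending $x\mapsto x\vee c'$, essentiality of $\ker_\varphi$ in $[\mathbf{0},c]$ transports to essentiality of $w$ in $[c',\mathbf{1}]$; then one upgrades this to essentiality in $\mathcal{L}$ by a short argument: if $z\wedge w=\mathbf{0}$, then $(z\vee c')\wedge w=c'$ by modularity (since $c'\leq w$), which forces $z\vee c'=c'$ and hence $z\leq c'\leq w$, so $z=\mathbf{0}$. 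Next, for any $y\leq w$, modularity gives $((y\vee\ker_\varphi)\wedge c')\vee\ker_\varphi=(y\vee\ker_\varphi)\wedge(c'\vee\ker_\varphi)=y\vee\ker_\varphi$, so $\varphi\bigl((y\vee\ker_\varphi)\wedge c'\bigr)=\varphi(y)$; since $\varphi(y)\leq\varphi(c')$ for such $y$, one computes $\pi_{\varphi(c')}\varphi(y)=\varphi(y)$ and $\theta^{-1}(\varphi(y))=(y\vee\ker_\varphi)\wedge c'$, whence $\varphi\psi\varphi(y)=\varphi(y)$.

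The main obstacle is the modular-lattice bookkeeping in the last step: carefully tracking how $\pi_{\varphi(c')}$ and $\theta^{-1}$ act on $\varphi(y)$ for $y\leq w$, and ensuring that the relevant images land inside $\varphi(c')$ so that the modular identities can be applied cleanly. Once the two key identities (essentiality of $w$ and $\varphi\psi\varphi|_{[\mathbf{0},w]}=\varphi|_{[\mathbf{0},w]}$) are in hand, the conclusion $[\varphi]_\Delta=[\varphi]_\Delta[\psi]_\Delta[\varphi]_\Delta$ is immediate, proving $\mathfrak{m}/\equiv_\Delta$ is regular.
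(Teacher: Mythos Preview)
Your proposal is correct and follows essentially the same route as the paper's proof: use $\mathfrak{m}$-$\mathcal{K}$-extending to find $c$ with $\ker_\varphi$ essential in $[\mathbf{0},c]$, apply $\mathfrak{m}$-$C_2$ to $\varphi\pi_{c'}$ to make $\varphi(c')$ a complement, invoke closure under complements to build $\psi=\iota_{c'}\theta^{-1}\pi_{\varphi(c')}$, and verify $\varphi\psi\varphi$ agrees with $\varphi$ below the essential element $w=\ker_\varphi\vee c'$. Your exposition supplies more detail than the paper in two spots---the essentiality of $w$ and the explicit formula $\theta^{-1}(\varphi(y))=(y\vee\ker_\varphi)\wedge c'$---whereas the paper handles the latter by simply noting $\varphi(y)\leq\varphi(c')$ implies $\varphi(y)=\varphi(z)$ for some $z\leq c'$, whence $\varphi\psi\varphi(y)=\varphi\psi\varphi(z)=\varphi(z)=\varphi(y)$; but these are differences of presentation, not of strategy.
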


\begin{proof}
	By Lemma \ref{congru}, $\mathfrak{m}/\equiv_\Delta$ is a monoid. Let $\varphi\in\mathfrak{m}$. Since $\mathcal{L}$ is $\mathfrak{m}$-$\mathcal{K}$-extending, there exists $c\in C(\mathcal{L})$ such that $\ker_{\varphi}$ is essential in $[\mathbf{0},c]$. Let $c'$ be a complement of $c$ in $\mathcal{L}$. The morphism $\varphi$ induces a linear isomorphism $\varphi|:[\mathbf{0},c']\to[\mathbf{0},\varphi(c')]$. Since $\varphi\pi_{c'}\in\mathfrak{m}$ and $\mathcal{L}$ satisfies $\mathfrak{m}$-$C_2$, $\varphi(c')$ is a complement in $\mathcal{L}$. By the hypothesis on $\mathfrak{m}$, we can extend $(\varphi|)^{-1}$ to a linear endomorphism $\psi\in\mathfrak{m}$ such that $\psi\varphi(x)=x$ for all $x\leq c'$. Thus $\varphi\psi\varphi(c')=\varphi(c')$. Moreover $\varphi\psi\varphi(\ker_{\varphi}\vee c')=\varphi(\ker_{\varphi}\vee c')$. Since $\ker_{\varphi}$ is essential in $[\mathbf{0},c]$ and $c'$ is a complement of $c$, $\ker_{\varphi}\vee c'$ is essential in $\mathcal{L}$. Let $y\leq\ker_{\varphi}\vee c'$. Then $\varphi(y)\leq\varphi(\ker_{\varphi}\vee c')=\varphi(c')$. This implies that there exists $\mathbf{0}\leq z\leq c'$ such that $\varphi(z)=\varphi(y)$. Therefore $\varphi\psi\varphi(y)=\varphi\psi\varphi(z)=\varphi(z)=\varphi(y)$. Thus, $\varphi\psi\varphi\equiv_\Delta\varphi$. 
\end{proof}

Since every endoregular lattice $\mathcal{L}$ is $\mathcal{K}$-extending and satisfies $(C_2)$, one might expect that, for those lattices, the congruence $\equiv_\Delta$ in $\End_{lin}(\mathcal{L})$ is trivial, but it is not the case. The following example shows an endoregular lattice $\mathcal{L}$ such that $\equiv_\Delta$ is not trivial.

\begin{example}
	Consider the following lattice $\mathcal{L}$
	\[\xymatrix{ & \mathbf{1}\ar@{-}[dl]\ar@{-}[dr] & \\ b\ar@{-}[dr] & & c\ar@{-}[dl] \\ & a_0\ar@{-}[d] & \\ & a_1\ar@{-}[d] & \\ & \vdots\ar@{-}[d] & \\  & \mathbf{0} &  }\]
	
	It is not difficult to see that every nonzero linear endomorphism of $\mathcal{L}$ is an isomorphism. Therefore $\mathcal{L}$ is endoregular. Consider the following endomorphism
	\[\begin{array}{c l}
	\varphi(x)=x & \text{for all } x\leq a_0 \\
	\varphi(b)=c & \\
	\varphi(c)=b & \\
	\varphi(\mathbf{1})=\mathbf{1} 
	\end{array}\]
	Then $\varphi\equiv_\Delta id$. In fact, $\End_{lin}(\mathcal{L})=\{0,id,\varphi\}$ and $\End_{lin}(\mathcal{L})/\equiv_\Delta=\{[0],[id]\}$.
\end{example}

The module theoretic version of Theorem \ref{thmDelta} is stated for continuous modules \cite[Proposition 3.15]{mohamedcontinuous}, that is modules satisfying the conditions $(C_1)$ and $(C_2)$. As an example of Theorem \ref{thmDelta}, we show a lattice that is $\mathcal{K}$-extending but does not satisfy $(C_1)$ \cite[definition 1.1]{albu2016conditions}.

\begin{example}
	Consider the following lattice $\mathcal{L}$
	\[\xymatrix{ & \mathbf{1}\ar@{-}[d] & \\ & c\ar@{-}[dr]\ar@{-}[dl] & \\ a\ar@{-}[dr] & & b\ar@{-}[dl] \\ & \mathbf{0} & }\]
	It can be seen easily that $\mathcal{L}$ does not satisfy $C_1$. There are 5 linear endomorphism on $\mathcal{L}$, $\End_{lin}(\mathcal{L})=\{0,id,\varphi,\psi,\tau\}$ given by 
	\[\begin{array}{c c c}
	\varphi(\mathbf{0})=\mathbf{0} & \psi(\mathbf{0})=\mathbf{0} & \tau(\mathbf{0})=\mathbf{0} \\
	\varphi(a)=\mathbf{0} & \psi(a)=\mathbf{0} & \tau(a)=b \\
	\varphi(b)=\mathbf{0} & \psi(b)=\mathbf{0} & \tau(b)=a \\
	\varphi(c)=\mathbf{0} & \psi(c)=\mathbf{0} & \tau(c)=c \\
	\varphi(\mathbf{1})=a & \psi(\mathbf{1})=b & \tau(\mathbf{1})=\mathbf{1}.
	\end{array}\]
	Hence, $\mathcal{L}$ is $\mathcal{K}$-extending. Then $\End_{lin}(\mathcal{L})/\equiv_\Delta=\{[0],[id],[\tau]\}$ is a regular monoid. 
\end{example} 

\begin{cor}
	Let $\mathcal{L}$ be an indecomposable modular lattice, that is, $C(\mathcal{L})=\{\mathbf{0},\mathbf{1}\}$. Consider the following sentences:
	\begin{enumerate}
		\item Every linear endomorphism $\varphi\notin\Delta$ has an inverse.
		\item $\End_{lin}(\mathcal{L})/\equiv_\Delta$ is a monoid in which every nonzero element has an inverse.
		\item $\mathcal{L}$ is $\mathcal{K}$-extending.
	\end{enumerate}
	Then (1)$\Rightarrow$(2)$\Rightarrow$(3). Moreover, if $\mathcal{L}$ is Hopfian, then the three conditions are equivalent.
\end{cor}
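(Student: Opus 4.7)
The plan is to verify the two stated implications $(1)\Rightarrow(2)\Rightarrow(3)$ and then close the loop $(3)\Rightarrow(1)$ under the Hopfian hypothesis, exploiting the fact that $C(\mathcal{L})=\{\mathbf{0},\mathbf{1}\}$ collapses the $\mathcal{K}$-extending condition into a dichotomy on $\ker_\varphi$.

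For $(1)\Rightarrow(2)$, note that the zero of $\End_{lin}(\mathcal{L})/\equiv_\Delta$ is the class $[0]_\Delta=\Delta$, so a nonzero element of the quotient is exactly $[\varphi]_\Delta$ with $\varphi\notin\Delta$. By hypothesis such a $\varphi$ has a two-sided inverse $\psi$ in $\End_{lin}(\mathcal{L})$, and passing to classes gives $[\varphi]_\Delta[\psi]_\Delta=[id]_\Delta=[\psi]_\Delta[\varphi]_\Delta$, so $[\psi]_\Delta$ is the desired inverse.

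For $(2)\Rightarrow(3)$, the main point is to show that if $\varphi\notin\Delta$ then $\ker_\varphi=\mathbf{0}$; together with the trivial observation that $\varphi\in\Delta$ gives $c=\mathbf{1}$, this exhausts the two possibilities in $C(\mathcal{L})=\{\mathbf{0},\mathbf{1}\}$ and establishes the $\mathcal{K}$-extending property. By (2) there exists $\psi\in\End_{lin}(\mathcal{L})$ with $[\psi]_\Delta[\varphi]_\Delta=[id]_\Delta$, i.e.\ $\psi\varphi\equiv_\Delta id$, so pick an essential $x\in\mathcal{L}$ with $\psi\varphi(y)=y$ for all $y\leq x$. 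If $\ker_\varphi\ne\mathbf{0}$, essentiality of $x$ forces $y:=\ker_\varphi\wedge x\ne\mathbf{0}$, but $y\leq\ker_\varphi$ gives $\varphi(y)=\mathbf{0}$ (using the isomorphism $\overline{\varphi}:[\ker_\varphi,\mathbf{1}]\to[\mathbf{0},\varphi(\mathbf{1})]$ from Definition \ref{linmor}), whence $y=\psi\varphi(y)=\mathbf{0}$, a contradiction. Thus $\ker_\varphi=\mathbf{0}$ is essential in $[\mathbf{0},\mathbf{0}]$, and $c=\mathbf{0}$ works.

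For $(3)\Rightarrow(1)$ under the Hopfian assumption, let $\varphi\notin\Delta$. By $\mathcal{K}$-extending there is $c\in\{\mathbf{0},\mathbf{1}\}$ with $\ker_\varphi$ essential in $[\mathbf{0},c]$; the case $c=\mathbf{1}$ is excluded because it would place $\varphi$ in $\Delta$, so $c=\mathbf{0}$ and $\ker_\varphi=\mathbf{0}$. Then $\varphi$ is a linear injection, and the Hopfian hypothesis promotes it to an isomorphism of $\mathcal{L}$, producing the required inverse. The only step that requires actual content is the kernel argument in $(2)\Rightarrow(3)$; everything else is bookkeeping once we exploit that indecomposability forces the only candidates for the distinguished complement to be $\mathbf{0}$ and $\mathbf{1}$.
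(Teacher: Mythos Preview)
Your proof is correct and follows essentially the same approach as the paper's own proof: the dichotomy $c\in\{\mathbf{0},\mathbf{1}\}$ drives $(2)\Rightarrow(3)$ and $(3)\Rightarrow(1)$ exactly as you outline, and the kernel argument via an essential $x$ with $\psi\varphi(y)=y$ for $y\leq x$ is identical to the paper's. Your write-up is somewhat more explicit (e.g.\ spelling out why $(1)\Rightarrow(2)$ holds and why $\varphi(y)=\mathbf{0}$ when $y\leq\ker_\varphi$), but there is no substantive difference in strategy.
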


\begin{proof}
	\textit{(1)}$\Rightarrow$\textit{(2)} It is clear.
	
	\textit{(2)}$\Rightarrow$\textit{(3)} Let $\varphi\in\End_{lin}(\mathcal{L})$. By hypothesis, $\ker_\varphi$ is essential in $\mathcal{L}$ or there exist $\psi\in\End_{lin}(\mathcal{L})$ and $x\in\mathcal{L}$ essential such that $\psi\varphi(y)=id(y)=y$ for all $y\leq x$. Consider $\ker_\varphi$ and set $y=x\wedge\ker_\varphi$. It follows that $\mathbf{0}=\psi\varphi(y)=y$. Since $x$ is essential, $\ker_\varphi=\mathbf{0}$. Thus, $\mathcal{L}$ is $\mathcal{K}$-extending.
	
	Suppose $\mathcal{L}$ is Hopfian. \textit{(3)}$\Rightarrow$\textit{(1)} Let $\varphi\in\End_{lin}(\mathcal{L})$. If $\ker_{\varphi}\neq\mathbf{0}$, then $\ker_{\varphi}$ is essential in $\mathcal{L}$ by the hypothesis. Therefore, $\varphi\in\Delta$. Now, if $\ker_{\varphi}=\mathbf{0}$ then $\varphi$ is an isomorphism because $\mathcal{L}$ is Hopfian. 
\end{proof}

\begin{thm}\label{thmnabla}
	Let $\mathcal{L}$ be a complete modular lattice and $\mathfrak{m}$ a submonoid of $\End_{lin}(\mathcal{L})$ closed under complements. If $\mathcal{L}$ is $\mathfrak{m}$-$\mathcal{T}$-lifting and satisfies $\mathfrak{m}$-$D_2$, then $\mathfrak{m}/\equiv^\nabla$ is a regular monoid.
\end{thm}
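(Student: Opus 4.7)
The plan is to run the dual of the proof of Theorem~\ref{thmDelta}. Fix $\varphi\in\mathfrak{m}$; by Lemma~\ref{congru}, $\mathfrak{m}/{\equiv^\nabla}$ is a monoid, so it suffices to produce $\psi\in\mathfrak{m}$ with $\varphi\psi\varphi\equiv^\nabla\varphi$. Applying $\mathfrak{m}$-$\mathcal{T}$-lifting to $\varphi$ yields $c\in C(\mathcal{L})$ with complement $c'$ such that $c\leq\varphi(\mathbf{1})$ and $s:=\varphi(\mathbf{1})\wedge c'$ is superfluous in $[\mathbf{0},c']$. The first step is to verify that $s$ is superfluous in all of $\mathcal{L}$: if $s\vee t=\mathbf{1}$, then modularity (using $s\leq c'$) gives $c'=c'\wedge(s\vee t)=s\vee(c'\wedge t)$, whence $c'\wedge t=c'$ by superfluousness of $s$ in $[\mathbf{0},c']$; thus $c'\leq t$ and consequently $t=s\vee t=\mathbf{1}$. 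This $s$ will be the superfluous element witnessing the required $\equiv^\nabla$-congruence.

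Next, consider $\pi_c\varphi\in\mathfrak{m}$, where $\pi_c\in\mathfrak{m}$ because closure under complements implies $\mathfrak{m}$ contains all projections (Remark~\ref{cucproj}). Since $c\leq\varphi(\mathbf{1})$, one computes $\pi_c\varphi(\mathbf{1})=(\varphi(\mathbf{1})\vee c')\wedge c=c$, so the image of $\pi_c\varphi$ is already a complement. The induced isomorphism $\overline{\pi_c\varphi}\colon[\ker_{\pi_c\varphi},\mathbf{1}]\to[\mathbf{0},c]$ satisfies $\iota_c\overline{\pi_c\varphi}\rho_{\ker_{\pi_c\varphi}}=\pi_c\varphi\in\mathfrak{m}$, so by $\mathfrak{m}$-$D_2$, $\ker_{\pi_c\varphi}$ is also a complement in $\mathcal{L}$. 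Since both the kernel and image of $\pi_c\varphi$ are complements, Proposition~\ref{reg} applied to $\pi_c\varphi$ furnishes $\psi_0\in\mathfrak{m}$ with $\pi_c\varphi\psi_0\pi_c\varphi=\pi_c\varphi$.

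Set $\psi:=\psi_0\pi_c\in\mathfrak{m}$. Then $\pi_c\varphi\psi\varphi=\pi_c\varphi\psi_0\pi_c\varphi=\pi_c\varphi$, which unpacks as $(\varphi\psi\varphi(y)\vee c')\wedge c=(\varphi(y)\vee c')\wedge c$ for every $y\in\mathcal{L}$. Because $c$ and $c'$ are complements, the map $x\mapsto x\wedge c$ is an isomorphism $[c',\mathbf{1}]\to[\mathbf{0},c]$, and both $\varphi\psi\varphi(y)\vee c'$ and $\varphi(y)\vee c'$ lie in $[c',\mathbf{1}]$; hence the equality lifts to $\varphi\psi\varphi(y)\vee c'=\varphi(y)\vee c'$. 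Intersecting both sides with $\varphi(\mathbf{1})$ and applying the modular law (valid because $\varphi\psi\varphi(y),\varphi(y)\leq\varphi(\mathbf{1})$) produces $\varphi\psi\varphi(y)\vee s=\varphi(y)\vee s$, i.e. $\varphi\psi\varphi\equiv^\nabla\varphi$.

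The delicate point is the final chain of modular manipulations turning a localized identity $\pi_c\varphi\psi\varphi=\pi_c\varphi$ (an equality only up to the projection onto $c$) into a congruence modulo a single superfluous element of $\mathcal{L}$. What makes this work is that $\mathfrak{m}$-$\mathcal{T}$-lifting supplies exactly the right $s=\varphi(\mathbf{1})\wedge c'$—living inside the direct summand $c'$ and hence superfluous in the whole lattice—while $\mathfrak{m}$-$D_2$ is the hypothesis that lets Proposition~\ref{reg} apply to $\pi_c\varphi$ in the first place.
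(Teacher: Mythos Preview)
Your proof is correct. The overall architecture matches the paper's: apply $\mathfrak{m}$-$\mathcal{T}$-lifting to obtain $c,c'$ with $s=\varphi(\mathbf{1})\wedge c'$ superfluous, use $\mathfrak{m}$-$D_2$ on $\pi_c\varphi$, build $\psi$, and conclude with the modular computation $\varphi\psi\varphi(a)\vee s=\varphi(a)\vee s$.

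The genuine difference is in how $\psi$ is produced. The paper works by hand: it identifies $y=\ker_{\pi_c\varphi}$ (via $\overline{\varphi}^{-1}(\varphi(\mathbf{1})\wedge c')$), chooses a complement $z$ of $y$, proves directly that $\varphi(z)$ is a complement of $c'$, and then invokes the closure-under-complements hypothesis explicitly to set $\psi=\iota_z(\varphi|_z)^{-1}\pi_{\varphi(z)}$. You instead observe that $\pi_c\varphi$ has both kernel and image complemented and appeal to Proposition~\ref{reg} as a black box to get $\psi_0$ with $\pi_c\varphi\psi_0\pi_c\varphi=\pi_c\varphi$, then set $\psi=\psi_0\pi_c$. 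Your route is shorter and hides the explicit lattice-theoretic construction inside Proposition~\ref{reg}; the paper's route makes the geometry of the decomposition visible and shows concretely how closure under complements is used. Either way, the final equality $\varphi\psi\varphi(y)\vee c'=\varphi(y)\vee c'$ is reached and then cut down modulo $s$ by the same modular-law argument.
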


\begin{proof}
	By Lemma \ref{congru}, $\mathfrak{m}/\equiv^\nabla$ is a monoid. Let $\varphi\in\mathfrak{m}$. Since $\mathcal{L}$ is $\mathfrak{m}$-$\mathcal{T}$-lifting, there exists $c\in C(\mathcal{L})$ there exists $c\in C(\mathcal{L})$ with complement $c'$ such that $c\leq\varphi(\mathbf{1})$ and $\varphi(\mathbf{1})\wedge c'$ is superfluous in $[\mathbf{0},c']$. Consider the isomorphism $\overline{\varphi}:[\ker_{\varphi},\mathbf{1}]\to[\mathbf{0},\varphi(\mathbf{1})]$. Then, there exist $\ker_{\varphi}\leq x,y\leq \mathbf{1}$ such that $\varphi(x)=c$, $\varphi(y)=\varphi(\mathbf{1})\wedge c'$ and $\overline{\varphi}$ induces isomorphisms $[\ker_{\varphi},x]\cong[\mathbf{0},c]$ and $[\ker_{\varphi},y]\cong[\mathbf{0},\varphi(\mathbf{1})\wedge c']$. Furthermore, $x$ and $y$ are complement one of each other in the interval $[\ker_{\varphi},\mathbf{1}]$. Therefore, $[y,\mathbf{1}]\cong[\ker_{\varphi},x]\cong[\mathbf{0},c]$. Since $\pi_{c}\varphi\in\mathfrak{m}$ and the $\mathfrak{m}$-$D_2$ condition, $y\in C(\mathcal{L})$. Let $z\in\mathcal{L}$ be a complement of $y$. Then $\ker_{\varphi}\wedge z=\mathbf{0}$ and hence $\varphi|:[\mathbf{0},z]\to[\mathbf{0},\varphi(z)]$ is an isomorphism. We claim that $\varphi(z)$ is a complement of $c'$ in $\mathcal{L}$. We have that
	\[\varphi(\mathbf{1})=\varphi(y\vee z)=\varphi(y)\vee\varphi(z)=(\varphi(\mathbf{1})\wedge c')\vee\varphi(z)=\varphi(\mathbf{1})\wedge(c'\vee\varphi(z)).\]
	Thus, $c\leq\varphi(\mathbf{1})\leq c'\vee\varphi(\mathbf{1})$. By modularity, $c'\wedge\_:[c,\mathbf{1}]\to[\mathbf{0},c']$ is an isomorphism, and $c'\wedge(c'\vee\varphi(z))=c'$. This implies that $c'\vee\varphi(z)=\mathbf{1}$. On the other hand, there exists $0\leq w\leq z$ such that $\varphi(w)=c'\wedge\varphi(z)$. Then $\varphi(w\vee\ker_{\varphi})=c'\wedge\varphi(z)\leq\varphi(y)$. This implies that $w\vee\ker_{\varphi}\leq y\wedge(z\vee\ker_{\varphi})=\ker_{\varphi}\vee\mathbf{0}=\ker_{\varphi}$. Hence $w\leq\ker_{\varphi}$. Thus $c'\wedge\varphi(z)=\varphi(w)=\mathbf{0}$. This proves the claim. By the hypothesis on $\mathfrak{m}$, we can consider the endomorphism $\psi\in\mathfrak{m}$ given by $\iota_z(\varphi|)^{-1}\pi_{\varphi(z)}:\mathcal{L}\to\mathcal{L}$. Therefore 
	\[\varphi\psi\varphi(a)=\pi_{\varphi(z)}(\varphi(a))=(\varphi(a)\vee c')\wedge\varphi(z),\]
	for all $a\in\mathcal{L}$. Note that $((\varphi(a)\vee c')\wedge\varphi(z))\vee c'=(\varphi(a)\vee c')\wedge(\varphi(z)\vee c')=(\varphi(a)\vee c')\wedge\mathbf{1}=\varphi(a)\vee c'$, for all $a\in\mathcal{L}$. Hence
	\[\varphi\psi\varphi(a)\vee(\varphi(\mathbf{1})\wedge c')=\varphi(\mathbf{1})\wedge(\varphi\psi\varphi(a)\vee c')=\varphi(\mathbf{1})\wedge(\varphi(a)\vee c')=\varphi(a)\vee(\varphi(\mathbf{1})\wedge c'),\]
	for all $a\in\mathcal{L}$. It is not difficult to see that $\varphi(\mathbf{1})\wedge c'$ is superfluous in $\mathcal{L}$ because $\varphi(\mathbf{1})$ is superfluous in $[c,\mathbf{1}]$ and $c\in C(\mathcal{L})$. Thus $\varphi\psi\varphi\equiv^\nabla \varphi$.
\end{proof}

\begin{cor}
	Let $\mathcal{L}$ be an indecomposable modular lattice, i.e., $C(\mathcal{L})=\{\mathbf{0},\mathbf{1}\}$. Consider the following conditions:
	\begin{enumerate}
		\item Every linear endomorphism $\varphi\notin\nabla$ has an inverse.
		\item $\End_{lin}(\mathcal{L})/\equiv^\nabla$ is a monoid in which every nonzero element has an inverse.
		\item $\mathcal{L}$ is $\mathcal{T}$-lifting.
	\end{enumerate}
	Then (1)$\Rightarrow$(2)$\Rightarrow$(3). Moreover, if $\mathcal{L}$ is cohopfian, then the three conditions are equivalent.
\end{cor}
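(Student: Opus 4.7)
The plan is to exploit that in an indecomposable lattice the defining condition of $\mathcal{T}$-lifting collapses drastically: the only available complement-pair $(c,c')$ is either $(\mathbf{1},\mathbf{0})$ or $(\mathbf{0},\mathbf{1})$. So $\mathcal{L}$ is $\mathcal{T}$-lifting if and only if, for every $\varphi\in\End_{lin}(\mathcal{L})$, either $\varphi(\mathbf{1})=\mathbf{1}$ (taking $c=\mathbf{1}$, $c'=\mathbf{0}$) or $\varphi(\mathbf{1})$ is superfluous in $\mathcal{L}$ (taking $c=\mathbf{0}$, $c'=\mathbf{1}$, where $\varphi(\mathbf{1})\wedge\mathbf{1}=\varphi(\mathbf{1})$ must be superfluous). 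I will use this reformulation throughout.

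For (1)$\Rightarrow$(2), I would just lift invertibility to the quotient: if $\varphi\notin\nabla$ admits a two-sided inverse $\psi\in\End_{lin}(\mathcal{L})$, then $[\psi]^\nabla$ inverts $[\varphi]^\nabla$ in $\End_{lin}(\mathcal{L})/\equiv^\nabla$; while every $\varphi\in\nabla$ lies in the zero class. Hence every nonzero class has an inverse.

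For (2)$\Rightarrow$(3), by the reformulation above it suffices to show that any $\varphi\notin\nabla$ satisfies $\varphi(\mathbf{1})=\mathbf{1}$. Assuming (2), pick $\psi\in\End_{lin}(\mathcal{L})$ with $[\varphi\psi]^\nabla=[id]^\nabla$; then there is a superfluous $x\in\mathcal{L}$ with $\varphi\psi(a)\vee x=a\vee x$ for all $a\in\mathcal{L}$. Evaluating at $a=\mathbf{1}$ gives $\varphi\psi(\mathbf{1})\vee x=\mathbf{1}$, and since $x$ is superfluous we conclude $\varphi\psi(\mathbf{1})=\mathbf{1}$, hence $\varphi(\mathbf{1})=\mathbf{1}$.

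For the cohopfian direction (3)$\Rightarrow$(1), let $\varphi\notin\nabla$, so $\varphi(\mathbf{1})$ is not superfluous. The reformulation of $\mathcal{T}$-lifting in the indecomposable setting rules out the trivial decomposition with $c=\mathbf{0}$, leaving $c=\mathbf{1}$ and hence $\varphi(\mathbf{1})=\mathbf{1}$, i.e., $\varphi$ is surjective. Cohopfianness then promotes surjectivity to an isomorphism, yielding an inverse. The only nontrivial step is the initial reduction of $\mathcal{T}$-lifting to the surjective-or-superfluous dichotomy, which I expect to be the main (though still short) obstacle; everything else is a direct computation with the definition of $\equiv^\nabla$.
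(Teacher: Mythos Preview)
Your proposal is correct and follows essentially the same route as the paper. The paper argues (2)$\Rightarrow$(3) by the identical evaluation $\varphi\psi(\mathbf{1})\vee x=\mathbf{1}$ forcing $\varphi(\mathbf{1})=\mathbf{1}$, and handles (3)$\Rightarrow$(1) via the same surjective-or-superfluous dichotomy you spell out; your explicit reformulation of $\mathcal{T}$-lifting in the indecomposable case is exactly what the paper uses tacitly.
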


\begin{proof}
	\textit{(1)}$\Rightarrow$\textit{(2)} It is clear.
	
	\textit{(2)}$\Rightarrow$\textit{(3)} Let $\varphi\in\End_{lin}(\mathcal{L})$. By hypothesis $\varphi(\mathbf{1})$ is superfluous in $\mathcal{L}$ or there exist $\psi\in\End_{lin}(\mathcal{L})$ and $x\in\mathcal{L}$ superfluous such that $\varphi\psi(a)\vee x=a\vee x$ for all $a\in\mathcal{L}$. Then $\varphi\psi(\mathbf{1})\vee x=\mathbf{1}\vee x=\mathbf{1}$. This implies that $\varphi\psi(\mathbf{1})=\mathbf{1}$. Therefore, $\varphi(\mathbf{1})=\mathbf{1}$. Thus, $\mathcal{L}$ is $\mathcal{T}$-lifting.
	
	Suppose $\mathcal{L}$ is cohopfian. \textit{(3)}$\Rightarrow$\textit{(1)} Let $\varphi\in\End_{lin}(\mathcal{L})$. If $\varphi(\mathbf{1})\neq\mathbf{1}$, then $\varphi(\mathbf{1})$ is superfluous in $\mathcal{L}$ by the hypothesis. Therefore, $\varphi\in\nabla$. Now, if $\varphi(\mathbf{1})=\mathbf{1}$ then $\varphi$ is an isomorphism because $\mathcal{L}$ is cohopfian.
\end{proof}	

In \cite[Corollary 2.32]{mohamedcontinuous}, it is proved that in a quasi-continuous module, two isomorphic submodules have isomorphic closures (given by the $(C_1)$ condition). In a dual way, in \cite[Theorem 4.24]{mohamedcontinuous}, it is proved that given two direct summands $A$ and $B$ of a quasi-discrete module $M$, such that $A/X\cong B/Y$ with $X$ superfluous in $A$ and $Y$ superfluous in $B$, then $A\cong B$. We finish this section giving an example which shows that the mentioned results cannot be extended to linear lattices.

\begin{example}
	Consider the following lattice $\mathcal{L}$:
	\[\xymatrix{ & & \mathbf{1}\ar@{-}[dl]\ar@{-}[dr] & \\ & c\ar@{-}[dl]\ar@{-}[dr] & & d\ar@{-}[dl] \\ a & & b & \\ & \mathbf{0}\ar@{-}[ul]\ar@{-}[ur] & & }\]
	Then $C(\mathcal{L})=\{\mathbf{0},\mathbf{1},a,d\}$. Hence $\mathcal{L}$ satisfies $(C_3)$. On the other hand:
	\begin{center}
		$a$ is essential in $[\mathbf{0},a]$,\\
		$b$ is essential in $[\mathbf{0},d]$,\\
		$c$ is essential in $[\mathbf{0},\mathbf{1}]$,\\
		$d$ is essential in $[\mathbf{0},d]$.
	\end{center}
	Thus, $\mathcal{L}$ satisfies $(C_1)$. Therefore, $\mathcal{L}$ is quasi-continuous. Note that $[\mathbf{0},a]$ and $[\mathbf{0},b]$ are isomorphic, but $[\mathbf{0},a]$ is not isomorphic to $[\mathbf{0},d]$.On the other hand, $\mathcal{L}$ is auto-dual, so $\mathcal{L}$ is quasi-discrete. We have that $\mathbf{0}$ is superfluous in $[\mathbf{0},a]$, $b$ is superfluous in $[\mathbf{0},d]$, and $[\mathbf{0},a]\cong[b,d]$. But $[\mathbf{0},a]$ is not isomorphic to $[\mathbf{0},d]$.
\end{example}



\bibliographystyle{acm}
\bibliography{biblio}

\end{document}